\providecommand{\U}[1]{\protect\rule{.1in}{.1in}}
\newtheorem{theorem}{Theorem}
\newtheorem{corollary}[theorem]{Corollary}
\newtheorem{definition}[theorem]{Definition}
\newtheorem{lemma}[theorem]{Lemma}
\newtheorem{remark}[theorem]{Remark}
\newenvironment{proof}[1][Proof]{\noindent\textbf{#1} }{\ \rule{0.5em}{0.5em}}
\newcommand*\re{\mathbb{R}}
\newcommand*\R{\mathbb{R}}
\newcommand*\N{\mathbb{N}}
\newcommand*\Omegabar{\overline{\Omega}}
\newcommand*\delomega{\partial\Omega}
\newcommand*\intomega{\int_{\Omega}}
\newcommand*\diw{\operatorname{div}}
\newcommand*\fint{\frac{1}{|\Omega|}\int}
\begin{document}

\title{The equation $\diw u+\langle a,u\rangle=f$}
\maketitle
\centerline{\scshape Pierre Bousquet$^1$ and Gyula Csat\'{o}$^2$}
\medskip 
{\footnotesize
  \centerline{1. Universit\'e de Toulouse, Toulouse, France.}

 \centerline{2. Universitat Polit\`ecnica de Catalunya, member of BGSMath Barcelona, Spain, supported }
 \centerline{by Fondecyt grant no. 11150017, and by the Mar\'ia de Maeztu Grant MDM-2014-0445.}
  
} 

\begin{abstract}
We study the solutions $u$ to the equation 
$$
\begin{cases}
\diw u + \langle a , u \rangle = f & \textrm{ in } \Omega,\\
u=0   & \textrm{ on } \partial \Omega,
\end{cases}
$$
where $a$ and $f$ are given. We significantly improve the existence results of [Csat\'o and Dacorogna, A Dirichlet problem involving the divergence operator, \textit{Ann. Inst. H. Poincar\'e Anal. Non Lin\'eaire}, 33 (2016), 829--848], where this equation has been considered for the first time. In particular, we prove the existence of a solution under essentially sharp regularity assumptions on the coefficients. The condition that we require on the vector field $a$ is  necessary and sufficient. Finally, our results cover the whole scales of Sobolev and H\"older spaces. 
\end{abstract}

\let\thefootnote\relax\footnotetext{\textit{2010 Mathematics Subject Classification.} Primary 35F15, Secondary 46N05 }
\let\thefootnote\relax\footnotetext{\textit{Key words and phrases.} divergence operator, boundary value problem, regularity.}
\let\thefootnote\relax\footnotetext{$^1$ pierre.bousquet@math.univ-toulouse.fr, $^2$ gyula.csato@upc.edu}

\section{Introduction}

\subsection{The problem}
In this paper, we study the existence  of solutions of the following equation:
\begin{equation}\label{eq78} \tag{$E_a$}
\begin{cases}
\diw u + \langle a , u \rangle = f & \textrm{ in } \Omega,\\
u=0   & \textrm{ on } \partial \Omega.
\end{cases}
\end{equation}
Here, \(\Omega\) is a bounded open set in \(\R^n\), \(n\geq 1\), \(a:\Omega \to \R^n\) is a vector field and \(f : \Omega \to \R\) is a function. The notation \(\langle a , u \rangle\) refers to the standard scalar product in \(\R^n\). We look for a solution  \(u:\Omega \to \R^n\) in Sobolev spaces or in H\"older spaces.

When \(a=0\), the above equation reduces to the classical \emph{divergence equation}, which has attracted considerable attention. Let us just mention for the moment that the expected regularity of a solution naturally depends on the regularity of the data. For instance, assuming that \(\Omega\) is Lipschitz and \(f\in L^{p}(\Omega)\),  there exists a solution \(u\) in \(W^{1,p}_{0}(\Omega;\R^n)\) if and only if 
\begin{equation}
 \label{eq:int condition a is 0}
  \int_{\Omega} f=0.
\end{equation}
The condition \eqref{eq:int condition a is 0} is closely related to the homogeneous  Dirichlet boundary condition. When \(f\in C^{0,\alpha}(\Omegabar)\) and \(\Omega\) is \(C^{2,\alpha}\) for some \(\alpha\in (0,1)\), then a solution $u$ exists in \(C^{1,\alpha}(\Omegabar)\) with $u=0$ on \(\partial \Omega\),  under the same necessary and sufficient condition \eqref{eq:int condition a is 0}.

When \(a\not=0\), the study of the perturbed equation \eqref{eq78} has been initiated in \cite{Csato-Dacorogna}. Quite surprinsingly, it  was observed  that the lower term \(\langle a , u \rangle\) dramatically modifies the existence theory. Indeed, the condition \eqref{eq:int condition a is 0} does not generalize to some integral condition involving $a,$ \emph{unless $a$ is a gradient}.

When \(a\) is a gradient: \(a=\nabla A\) for some function \(A:\Omega\to \R\),  then  for every \(u\),
\[
\diw u + \langle a , u \rangle = e^{-A}\diw (u e^{A}).
\] 
It follows that for every \(f\), the equation \(\diw u + \langle a , u \rangle = f\) is equivalent to
\[
\diw (u e^{A}) = e^A f,
\]
and the classical theory when \(a\equiv 0\) then applies. In particular, the existence of a solution requires that 
\[
\int_{\Omega}e^{A}f=0.
\]
The aim of the present paper is to obtain  existence and regularity of solutions to the divergence equation with a lower order term \eqref{eq78},   under natural regularity assumptions on the data, in both Sobolev and H\"older spaces, when \(a\) is not a gradient.

The existence problem can be formulated in (at least) three different ways that we now detail.
Let us debote by \(T_a\) the operator \(\diw + \langle a , \cdot \rangle\). Assuming that the right hand side \(f\) belongs to a given Banach space \(Y\), we look for a function \(u\) in a given Banach space \(X\) such that \(T_a(u)=f\). This leads to the first formulation of the problem: \emph{Is  $T_a :X\to Y$  onto ?} 
If the answer is positive, then the open map theorem implies that for every \(f\in Y\), there exists some \(u\) in \(X\) such that \(\|u\|_X\leq C \|f\|_Y\), where \(C>0\) is a constant which depends only on \(X, Y\) and \(a\). 

Let us now assume that such a solution \(u\) exists, which is not unique: for a discussion on the kernel \(N(T_a)\) of \(T_a\), see \cite{Csato-Dacorogna}. Therefore,
that one can choose  \(u\) linearly with respect to \(f\) is not obvious. This is the second way to address the existence problem: \emph{Does there exist a right inverse to} $T_a$ ?
If \(T_a\) is surjective, then \(T_a\) admits a right inverse if and only if the kernel \(N(T_a)\) of \(T_a\) admits a complement in \(X\), see \cite[Theorem 2.12]{Brezis functional a}. 

When \(X=W^{1,p}_0(\Omega;\R^n)\) and \(Y=L^{p}(\Omega)\) for some \(p\in (1,\infty)\), we will obtain a bounded linear operator \(S_a : L^{p}(\Omega)\to W^{1,p}_0(\Omega;\R^n)\) such that \(T_a\circ S_a (f)=f\) for every \(f\in L^{p}(\Omega)\). \emph{A priori}, such an  \(S_a\) depends on the exponent \(p\). We will say that a right inverse \(S_a\) to \(T_a\) is \emph{universal} in the scale of Lebesgue spaces if 
\begin{enumerate}
\item the operator \(S_a\) is well defined on \(\bigcup_{1<p<\infty}L^{p}(\Omega)\) with values into the set \(\bigcup_{1<p<\infty} W^{1,p}_0(\Omega;\R^n)\),
\item  for every \(p\in (1,\infty)\), \(S_a : L^{p}(\Omega) \to W^{1,p}_0(\Omega ; \R^n)\) is continuous. 
\end{enumerate}
We are thus led to the third formulation of the existence problem: \emph{Does there exist a  right inverse to  $T_a$ which is universal in the scale of Lebesgue spaces ?} 
Naturally, one can formulate a similar question in the scale of higher order Sobolev spaces \(W^{k,p}\) and H\"older spaces \(C^{k,\alpha}\), with \(k\in \N\), \(p\in (1,\infty)\) and \(\alpha\in (0,1)\).

\subsection{The main results}

Our first main result answers the three above questions in the scale of Lebesgue spaces:
 
\begin{theorem}\label{th-main}
Let \(\Omega\) be a bounded open Lipschitz set. Let  \(q>n\) and  \(a\in L^{q}(\Omega;\R^n)\) such that \(a\) is not a gradient: there exists no \(A\in W^{1,q}(\Omega)\) such that \(a=\nabla A\).
Then there exists a linear operator
\[ 
S_a : \bigcup_{1<p\leq q}L^{p}(\Omega)\to \bigcup_{1<p\leq q} W^{1,p}_0(\Omega;\R^n)
\] 
such that for every \(1<p\leq q\), the map \(S_{a}\) is continuous from \(L^{p}(\Omega)\) into \(W^{1,p}_0(\Omega;\R^n)\) and
\[
\forall f\in L^{p}(\Omega), \qquad \diw S_a(f) + \langle a, S_a(f) \rangle =f.
\]
\end{theorem}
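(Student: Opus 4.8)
The plan is to establish, for each fixed $p\in(1,q]$, that $T_a:=\diw+\langle a,\cdot\rangle$ maps $W^{1,p}_0(\Omega;\rn)$ onto $L^p(\Omega)$, and then to build from the corresponding solutions a single operator $S_a$ that is universal in the Lebesgue scale. The structural observation that drives everything is that the lower order term is a \emph{compact} perturbation of the divergence: since $q>n$, the exponent $r$ given by $\frac1r=\frac1p-\frac1q$ satisfies $r<p^{*}$, so $u\mapsto\langle a,u\rangle$ factors as $W^{1,p}_0(\Omega;\rn)\hookrightarrow\hookrightarrow L^{r}(\Omega;\rn)\xrightarrow{\langle a,\,\cdot\,\rangle}L^p(\Omega)$ and is compact. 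Writing $T_a=\diw+K$ with $K$ compact, and using that $\diw:W^{1,p}_0(\Omega;\rn)\to L^p(\Omega)$ has closed range (the mean-zero subspace, by the Bogovskii/Ne\v{c}as theory) of codimension one, I get that $T_a$ has closed range of finite codimension.

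It then suffices to show that the cokernel is trivial, and this is exactly where the assumption that $a$ is not a gradient enters. If $\psi\in L^{p'}(\Omega)$ annihilates the range of $T_a$, then testing against $u\in C^\infty_c(\Omega;\rn)$ gives $\nabla\psi=\psi a$ in the sense of distributions. Since $p\le q$ one has $\psi a\in L^{r_0}$ with $r_0\ge1$, hence $\psi\in W^{1,r_0}_{\mathrm{loc}}$, and a bootstrap exploiting $q>n$ (each iteration lowers the reciprocal of the integrability exponent by $\frac1n-\frac1q>0$) yields $\psi\in W^{1,q}_{\mathrm{loc}}(\Omega)\subset C^0(\Omega)$ with $\nabla\psi=\psi a\in L^q$. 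If $\psi\not\equiv0$, a Gr\"onwall argument along almost every line segment shows that $\psi$ has no zero, hence a constant sign, so $A:=\log|\psi|$ satisfies $\nabla A=a$; a Poincar\'e inequality on the bounded Lipschitz set upgrades $A$ to $W^{1,q}(\Omega)$, contradicting the hypothesis. Thus $\psi\equiv0$ and $T_a$ is onto for every $p\in(1,q]$. (I use here that $\Omega$ is connected; otherwise one argues component by component.)

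For the universal operator, I would look for the solution in the form $u=B(g)+t\phi$, with $B$ the Bogovskii right inverse of the divergence (which is universal), $g\in L^p_0(\Omega)$, $t\in\R$, and $\phi\in W^{1,q}_0(\Omega;\rn)$ a fixed divergence-free field with $\int_\Omega\langle a,\phi\rangle=1$ — such a $\phi$ exists precisely because $a$ is not a gradient, for otherwise the bounded functional $v\mapsto\int_\Omega\langle a,v\rangle$ would vanish on $\ker\diw\subset W^{1,q}_0$ and a de Rham type lemma would force $a=\nabla A$. Imposing $T_a(u)=f$ fixes $t=\int_\Omega f-\int_\Omega\langle a,B(g)\rangle$ through the mean, and substituting this value turns the equation into $(\mathrm{id}+\mathcal L)\,g=\mathcal G f$ on $L^p_0(\Omega)$, where $\mathcal G f=f-\bigl(\int_\Omega f\bigr)\langle a,\phi\rangle$ and $\mathcal L$ is built from $B$, multiplication by $a$ and a rank-one correction by $\phi$, hence is compact. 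All these ingredients are defined consistently on $\bigcup_{1<p\le q}L^p(\Omega)$ and bounded on each $L^p$, and the same bootstrap as above shows that a solution $g$ associated with $f\in L^r(\Omega)$ lies automatically in $L^r_0(\Omega)$; so as soon as $\mathrm{id}+\mathcal L$ is invertible on each $L^p_0(\Omega)$, the formula $S_a(f)=B(g)+t\phi$ with $g=(\mathrm{id}+\mathcal L)^{-1}\mathcal G f$ defines a right inverse that is universal in the required sense, its continuity $L^p(\Omega)\to W^{1,p}_0(\Omega;\rn)$ following from the bounded inverse theorem and the boundedness of the pieces.

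The main obstacle is exactly the invertibility of $\mathrm{id}+\mathcal L$. As a compact perturbation of the identity it is Fredholm of index zero, so only injectivity has to be checked; but $(\mathrm{id}+\mathcal L)g=0$ is equivalent to $B(g)+t\phi\in N(T_a)$ for the corresponding $t$, and $N(T_a)$ is genuinely infinite-dimensional as soon as $n\ge2$. So one must show that the model space $\operatorname{range}B\oplus\R\phi$ is \emph{transverse} to $N(T_a)$ — equivalently, a complement of $N(T_a)$ in $W^{1,p}_0(\Omega;\rn)$ — and this uniformly in $p\le q$. I expect this to require a second, finer use of the hypothesis on $a$: for instance, pairing a hypothetical nonzero element of $N(T_a)\cap(\operatorname{range}B\oplus\R\phi)$ against solutions of the adjoint equation $\nabla\psi=\psi a$ (which has only the trivial solution, by the cokernel computation above) to reach a contradiction. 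A convenient preliminary simplification throughout is to reduce to a divergence-free $a$: writing $a=\nabla A+a_0$ with $A\in W^{1,q}(\Omega)$ (solve $\Delta A=\diw a$ after extending $a$ to $\rn$ and applying Calder\'on--Zygmund estimates), the substitution $u\mapsto e^Au$, $f\mapsto e^Af$ is a universal isomorphism that brings $\diw a=0$ without changing the validity of the statement.
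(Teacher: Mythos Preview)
Your first two paragraphs are essentially correct and parallel the paper: the compactness of $u\mapsto\langle a,u\rangle$ from $W^{1,p}_0$ to $L^p$ (because $q>n$), the resulting semi-Fredholm structure, and the identification of the cokernel with $\{\psi\in L^{p'}:\nabla\psi=\psi a\}$ followed by the conclusion $\psi\equiv 0$ are exactly the ingredients the paper uses (Lemmas~\ref{lemma:T continuous}, \ref{lemma_compacity_K}, \ref{lemma_kernel}, \ref{lm-nta-t}). The paper proves $\psi\equiv 0$ without first bootstrapping to continuity---it works in $W^{1,1}$ and integrates along coordinate directions in cubes---but your route via the bootstrap to $C^0$ (which does use $q>n$) and then Gr\"onwall is fine and in fact appears in the paper as the ``easy case'' remark after Lemma~\ref{lemma_kernel}.

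The genuine gap is exactly where you place it, and it is not a technicality. Your ansatz $u=B(g)+t\phi$ builds in \emph{one} correction direction, but the Fredholm defect of $I+K$ can exceed one. Concretely, your $(\mathrm{id}+\mathcal L)g=0$ is equivalent (after computing $t$) to $B(g)+t\phi\in N(T_a)$; taking $t=0$ this contains every $g\in L^p_0$ with $B(g)\in N(T_a)$, i.e.\ every $g$ in the kernel of $I+K$ restricted to $L^p_0$. Nothing in the hypothesis ``$a$ is not a gradient'' forces this kernel to be trivial: it depends on the particular Bogovski\u\i\ operator $B$, and one has no control over $\operatorname{range}B\cap N(T_a)$. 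Your proposed rescue---pairing a nontrivial element against solutions of the adjoint equation $\nabla\psi=\psi a$---cannot work, since you have just shown that the only such $\psi$ is zero; there is nothing to pair against. The Helmholtz reduction to $\diw a=0$ in your last paragraph is harmless but does not touch this issue.

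The paper circumvents the problem by \emph{not} fixing the rank of the correction in advance. It works with $T_a\circ S=I+K$ on $L^p$ (your relation, essentially), accepts that $N(I+K)$ may be any finite-dimensional space, and chooses once and for all at the level $p=q$: a complement $X$ of $N(I+K)$ in $L^1$, and a finite-dimensional $Z\subset T_a(C^\infty_c)$ complementing $(I+K)(L^q)$ in $L^q$. The key new lemma (Lemma~\ref{lm_inva_p}) is a bootstrap showing $(I+K)f\in L^q\Rightarrow f\in L^q$; this forces $N(I+K)$, $Z$, and the associated projections to be \emph{the same} for every $p\le q$, so the right inverse
\[
S_a(f)=S\circ \bigl((I+K)|_{X\cap L^p}\bigr)^{-1}\circ Q_p(f)+\sum_\alpha e_\alpha^*(\zeta_p f)\,\bar e_\alpha
\]
is universal. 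In short: replace your single $\phi$ by a basis $(\bar e_\alpha)$ of preimages of a full complement $Z$, and prove the $p$-independence of the decompositions via the bootstrap, rather than trying to show transversality of a fixed model subspace to $N(T_a)$.
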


The assumption on the exponent \(q>n\) is related to  the fact that the lower order term \(\langle a , u \rangle\) is expected to be in \(L^{p}\) for any \(u\in W^{1,p}\). 
When \(p<n\), the Sobolev embedding \(W^{1,p}\subset L^{p^*}\), \(p^*=\frac{np}{n-p}\) suggests that we  should only require \(a\in L^{n}\).  Indeed, under such an assumption, \(\langle a , u \rangle\) belongs to \(L^{p}\) for every \(u\in W^{1,p}_0\). Here, we use the fact that \(\frac{1}{p^*}+\frac{1}{n}=\frac{1}{p}\).
In the above statement, we require the slightly stronger assumption \(a\in L^{q}\) for some \(q>n\).

When \(p>n\), using  the Morrey embedding \(W^{1,p}\subset L^{\infty}\), one can see that the assumption \(a\in L^{p}\) is the natural assumption to ensure that \(\langle a , u \rangle\) belongs to \(L^p\).  Finally, when \(p=n\), the fact that \(W^{1,p}_0\subset \bigcap _{1\leq r<\infty}L^{r}\) shows that the assumption \(a\in L^{q}\) for some \(q>n\) is natural to ensure that \(\langle a , u \rangle\) belongs to \(L^{p}\). 

The operator \(S_a\) is a universal construction in the scale of Lebesgue spaces. In particular, for every \(1<p_1<p_2\), \(S_a|_{L^{p_2}}=(S_{a}|_{L^{p_1}})|_{L^{p_2}}\). As a matter of fact, \(S_a\) does not depend on \(q\), in the following sense:

\begin{remark}\label{remark-185}
If there exists \(r>q\) such that \(a\in L^{r}(\Omega)\), then \(S_a\) maps continuously \(L^{r}(\Omega)\) into \(W^{1,r}_0(\Omega;\R^n)\).
\end{remark}

The above remark can be seen as a regularization property of the  construction given in the proof of Theorem \ref{th-main}. In order to obtain a universal construction in the whole scales of Sobolev and H\"older spaces, we  assume that \(\Omega\) is at least of class \(C^2\).
We use the following notation to abbreviate higher order Sobolev and H\"older spaces with zero boundary values:
$$
  W_z^{m,p}=W^{m,p}\cap W_0^{1,p}\quad\text{ and }\quad C_z^{m,\alpha}=\left\{u\in C^{m,\alpha}:\, u=0\text{ on }\delomega\right\}.
$$
In particular, when \(m\geq 2\), the space \(W_{z}^{m,p}\) does not agree with  $W_0^{m,p}$ which usually denotes the closure of smooth functions with compact support in $\Omega$ in the norm $W^{m,p}$.

\begin{theorem}\label{th-universal-property}
Let \(\Omega\) be a bounded open set of class \(C^2\). Let  \(q>n\) and  \(a\in L^{q}(\Omega;\R^n)\) such that \(a\) is not a gradient. Then there exists an operator 
\[
\overline{S_a}:\bigcup_{1<p\leq q}L^{p}(\Omega)\to \bigcup_{1<p\leq q} W^{1,p}_0(\Omega;\R^n)
\]
such that \(\diw S_a(f) + \langle a, S_a(f) \rangle =f\) for every \( f\in \bigcup_{1<p\leq q} L^{p}(\Omega)\).
Moreover, we have the following additional properties:
\begin{enumerate}
\item We assume that \(\Omega\) is  of class \(C^{m+2}\) and \(a\in W^{m,r}(\Omega;\R^n)\) for some \(m\in \N\) and \(r>\frac{n}{m+1}\). Then for every \(1<p\leq r\),  \(\overline{S_a}\) maps continuously \(W^{m,p}(\Omega)\) into \(W^{m+1,p}_z(\Omega;\R^n)\).
\item We assume that \(\Omega\) is of class \(C^{m+2, \alpha}\) and \(a\in C^{m,\alpha}(\Omegabar;\R^n)\) for some \(m\in \N\) and \(\alpha \in (0,1)\).
Then \(\overline{S_a}\) maps continuously \(C^{m,\alpha}(\Omegabar)\) into \(C^{m+1,\alpha}_z(\Omegabar;\R^n)\). 
\end{enumerate}
\end{theorem}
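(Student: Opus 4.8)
The strategy is to bootstrap from Theorem~\ref{th-main}: we already have a universal right inverse $S_a$ in the Lebesgue scale on a Lipschitz domain, so the task is to upgrade it to a construction $\overline{S_a}$ that also gains derivatives, i.e. sends $W^{m,p}$ to $W^{m+1,p}_z$ and $C^{m,\alpha}$ to $C^{m+1,\alpha}_z$, when the domain and the coefficient $a$ are smooth enough. The natural device is a \emph{fixed-point / correction} scheme. Given $f$, one first solves the plain divergence equation $\diw v = g$ with zero boundary values using a classical regularity-preserving solution operator $R$ (Bogovski\u\i{}-type operator, or the operators of Dacorogna--Csat\'o on $C^{k,\alpha}$ and $W^{k,p}$ domains), which requires the compatibility condition $\int_\Omega g = 0$. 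The lower-order term $\langle a, u\rangle$ is then treated perturbatively: writing $u = R(h)$ for an auxiliary $h$ with $\int h = 0$, the equation $\diw u + \langle a, u\rangle = f$ becomes $h + \langle a, R(h)\rangle = f$, i.e. $(I + K)h = f$ where $K = \langle a, R(\cdot)\rangle$. The point is that $K$ is compact and, more importantly, \emph{smoothing by one derivative}: since $R$ gains one derivative and multiplication by $a \in W^{m,r}$ (resp. $C^{m,\alpha}$) preserves $W^{m,p}$ (resp. $C^{m,\alpha}$) under the stated integrability thresholds $r > n/(m+1)$, the operator $K$ maps $W^{m,p} \to W^{m+1,p} \hookrightarrow W^{m,p}$ compactly.

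\textbf{Key steps, in order.} First I would fix the classical solution operator $R$ for $\diw v = g$, $v|_{\partial\Omega}=0$, recalling (from the divergence-equation literature, valid on $C^{m+2}$ resp. $C^{m+2,\alpha}$ domains) that $R: W^{m,p}_{\int = 0} \to W^{m+1,p}_z$ and $R: C^{m,\alpha}_{\int=0}\to C^{m+1,\alpha}_z$ are bounded, and that $R$ can be chosen universal across the scales — this is what forces the hypotheses on the regularity of $\partial\Omega$. Second, I would handle the compatibility condition: the equation $\diw u + \langle a,u\rangle = f$ has no pointwise obstruction (this is the whole content of the companion existence theory — $a$ not a gradient means there is no integral condition), so I need to absorb the "mean-zero defect" of $f$ into the lower-order term. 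Concretely, look for $u = R(f - c) + w$ where $c = \fint f$ is chosen so $f - c$ has mean zero, and $w$ is a correction solving $\diw w + \langle a, w\rangle = c - \langle a, R(f-c)\rangle$; iterating or, better, setting up the single functional-analytic equation $(I+K)h = f$ on the hyperplane $\{\int h = \int f\}$ (or on all of $W^{m,p}$ after subtracting a fixed solution of the constant right-hand side, which exists precisely because $a$ is not a gradient) reduces everything to inverting $I+K$. Third, invertibility: $I + K$ is Fredholm of index zero by compactness of $K$, so injectivity implies surjectivity and a bounded inverse. Injectivity on the relevant space is equivalent to triviality of a certain kernel, which one reads off from the existence theory in Theorem~\ref{th-main} (the solution is unique modulo $N(T_a)$, and one arranges the scheme so that the free parameters in $N(T_a)$ are killed); alternatively one invokes the already-constructed $S_a$ to identify a canonical solution and shows the two constructions agree on $L^p$, then propagates regularity. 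Fourth, universality: because $R$ and the whole construction are built from fixed operators independent of $p$, $m$, $\alpha$, the resulting $\overline{S_a}$ automatically restricts consistently across the scales, giving assertions (1) and (2) simultaneously. Fifth, the Hölder case runs identically with $R$ the Hölder divergence operator and the Schauder-type product estimate $\|a w\|_{C^{m,\alpha}} \le C\|a\|_{C^{m,\alpha}}\|w\|_{C^{m,\alpha}}$ replacing the Sobolev product estimate.

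\textbf{Main obstacle.} The delicate point is the interaction between the \emph{integrability threshold} $r > n/(m+1)$ and the \emph{product rule in Sobolev spaces}. One must verify that $a \in W^{m,r}$ with $r > n/(m+1)$ and $u \in W^{m+1,p}$ with $p \le r$ indeed forces $\langle a, u\rangle \in W^{m,p}$ with a bound $\|\langle a,u\rangle\|_{W^{m,p}} \le C\|a\|_{W^{m,r}}\|u\|_{W^{m+1,p}}$; this is a careful Leibniz-rule argument where each term $\partial^\beta a \cdot \partial^{\alpha-\beta}u$ must be estimated in $L^p$ by splitting via Hölder and invoking Sobolev embeddings, and the exponent arithmetic is exactly tight when $|\beta| = m$ (here the hypothesis $r > n/(m+1)$, together with $u \in W^{m+1,p}$ so that $\partial^{\alpha - \beta} u = u$'s top-order-minus-$m$ part lies in a good Sobolev space, is used). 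A secondary subtlety is matching up the new operator $\overline{S_a}$ with the old $S_a$ on overlapping domains of definition so that it is genuinely a single operator; this requires the uniqueness/density argument alluded to above. I expect the Sobolev product estimate (and its borderline cases, e.g. $m=0$ recovering $r>n$) to be where the real work lies; the Fredholm and universality parts are soft once that is in place.
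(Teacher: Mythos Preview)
Your plan is essentially the paper's approach: replace the Bogovski\u\i{} operator by a regularity-preserving divergence inverse $R=\overline{S_0}$, reduce to the equation $(I+K)h=f$ with $K(h)=\langle a, R(h-\fint h)\rangle - \fint h$, and identify the Sobolev product estimate (your ``main obstacle'') as the technical core --- this is exactly Lemmas~\ref{lm-univ-sobolev} and~\ref{lm-univ-holder}, proved via the multiplication lemma in the Appendix.

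There is, however, one genuine soft spot in your primary route. You propose to invert $I+K$ on $W^{m,p}$ (resp.\ $C^{m,\alpha}$) via the Fredholm alternative, arguing ``injectivity implies surjectivity''. But $I+K$ is \emph{not} injective in general: its kernel $N$ on $L^p$ is a nontrivial finite-dimensional space, and by the very bootstrap you outline one has $N\subset W^{m,p}$, so the kernel persists in the smoother space. Hence $(I+K)^{-1}$ does not exist, and a direct Fredholm inversion does not produce a right inverse to $T_a$. Your ``alternatively'' --- keep the explicit $L^p$-level construction of $S_a$ and show each ingredient preserves the higher regularity --- is the correct fix, and it is precisely what the paper does: Lemma~\ref{lm_univ_prop} packages this by checking that the decomposition $L^p=(X\cap L^p)\oplus N=(I+K)(L^p)\oplus Z$ and the partial inverse $V_p$ all respect the subspace $F=W^{m,p}$ (resp.\ $C^{m,\alpha}$), the key hypothesis being the implication $(I+K)u\in F\Rightarrow u\in F$. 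So your plan is right provided you commit to the ``alternative'' branch rather than the Fredholm-injectivity branch.
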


\begin{remark}\label{remark-regularity-omega}
The regularity assumptions that we make on the data are essentially sharp, except possibly for the set \(\Omega\).
Indeed, in the scale of Sobolev spaces, one expects that the statement holds true for every \(\Omega\) of class \(C^{m+1}\) instead of class \(C^{m+2}\). Similarly, in the scale of H\"older spaces, the conclusion should be correct when \(\Omega\) is merely of class \(C^{m+1,\alpha}\). However,  the proof of Theorem \ref{th-universal-property} relies on the inversion of the divergence (when \(a=0\)) and we are not aware of any \emph{universal} construction of such an inverse under these sharper regularity assumptions on the domain \(\Omega\).
\end{remark}

\subsection{Comparison with previous results}

In \cite{Csato-Dacorogna}, the existence of a solution to \eqref{eq78} is proved when the data are smooth: one assumes that \(\Omega\) is \(C^{r+4}\), \(f\) and \(a\) are \(C^{r+3}\) for some \(r\geq 0\). Moreover, one requires that the domain \(\Omega\) is diffeomorphic to a ball. Finally, the vector field \(a\) must satisfy the following condition:  
\begin{equation}
 \label{theorem:curl a assmumption}
 \operatorname{curl}a(x_0)\neq 0\quad\text{ for some }x_0\in \Omega.
\end{equation}
Under these assumptions, there exists a solution \(u\in C^{r+1}(\overline{\Omega};\R^n)\), see \cite[Theorem 2]{Csato-Dacorogna}.

Under a stronger assumption on the vector field \(a\), namely
\begin{equation}\label{eq92}
\inf_{x\in \partial \Omega}|\textrm{ curl }a(x)| >0,
\end{equation}
a right  inverse to \(T_a\) is constructed in the setting of H\"older spaces, see \cite[Theorem 3]{Csato-Dacorogna}. In the latter statement, the regularity assumptions are sharp for \(f\), but not for \(\Omega\) or \(a\). It also follows from the proof that the construction is universal in the scale of H\"older spaces. 

Both conditions \eqref{theorem:curl a assmumption} and \eqref{eq92}  imply (but are not equivalent to) the fact that \(a\) is not a gradient. In \cite[Theorem 5]{Csato-Dacorogna}, it was observed   that solutions to \eqref{eq78} exist in certain cases even if $\operatorname{curl}a$ vanishes everywhere without any integral condition of $f$, as long as $a$ is not a gradient. In view of Theorem \ref{th-main},
 the latter turns out to be the natural assumption  for the  existence theory of \eqref{eq78}.

\begin{remark} The two results in \cite{Csato-Dacorogna} are stated for a more general boundary condition, given by a vector field \(u_0\). However, this case easily reduces to the case \(u_0\equiv 0\), up to a modification of the right hand side \(f\) (see the first step of the proof of \cite[Theorem 2]{Csato-Dacorogna}).  
\end{remark}

\subsection{Some ideas of the proof}

We follow a totally different approach from the one used in \cite{Csato-Dacorogna}. 
The proof of Theorem \ref{th-main} relies on the fact that \eqref{eq78} is a compact perturbation of the classical divergence equation \(\diw u=f\). In order to be more precise, we need to give a quick review of the case $a=0$. Thus we shall consider solutions $u$ of the problem
\begin{equation}\label{eq:a is zero}\tag{$E_0$}
\begin{cases}
\diw u = f & \textrm{ in } \Omega,\\
u=0   & \textrm{ on } \partial \Omega.
\end{cases}
\end{equation}
For every \(1<p<\infty\), we define $L^p_{\sharp}(\Omega)$ as
$$
  L^p_{\sharp}(\Omega)=\left\{f\in L^p(\Omega):\,\intomega f=0\right\}.
$$
Fix \(p\in (1,\infty)\). As explained in \cite[Lemma 10]{Auscher-Russ-Tchamitchian}, the operator \(T_0=\diw\) defined on \(W^{1,p}_0\) is surjective onto \(L^{p}_\sharp\), provided that the range \(R(T_{0}^*)\) of the dual \(T_{0}^*\) is closed.  Indeed, this condition implies that the range of \(T_{0}\) is equal to \(N(T_{0}^*)^\perp\), see  
 \cite[Theorem 2.19]{Brezis  functional a}. This leads to the desired conclusion since \(N(T_{0}^*)^\perp=L^{p}_{\sharp}\).
 When \(p=2\), the fact  that \(R(T_{0}^*)\) is closed can be obtained  as a consequence of the following estimate \cite[Chapitre 3, Lemme 7.1]{Necas}:
\begin{equation}
\label{eq:Necas estimate}
  \forall f\in L^{2}(\Omega), \qquad \|f\|_{L^{2}} \leq C \left( \|f\|_{W^{-1,2}} + \|\nabla f\|_{W^{-1,2}}\right).
\end{equation}
This strategy to prove the surjectivity of \(T_{0}\)  thus relies on a \emph{duality} estimate. It does not provide a right inverse to \(T_0\). 

The duality approach is also one of the main features of the proof given  by Bourgain and Brezis in \cite[Theorem 2']{Bourgain-Brezis} to establish the existence of a right inverse to \(T_0\) when \(\Omega\) is a Lipschitz set. Indeed, their argument  relies on the following abstract result, see \cite[Lemma 8]{Bourgain-Brezis}:
\begin{lemma}\label{lemma_fa}
Let \(E\) and \(F\) be two Banach spaces  and   let \(T\) be a bounded linear operator from \(E\) into \(F\) such that \(N(T^*)=\{0\}\). Assume that there exists a bounded linear operator \(\Tilde{S}\) from \(F\) to \(E\) and a compact linear operator \(K\) from \(F\) into itself such that \(T\circ \Tilde{S} = I+K\). Then \(T\) admits a right inverse \(S\).
\end{lemma}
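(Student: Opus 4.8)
The plan is to combine the Riesz--Schauder (Fredholm) theory of the operator $I+K$ with the closed range theorem applied to $T$, and then to assemble a right inverse by correcting $\tilde{S}$ on a finite-dimensional defect space. First I would record the structure of $I+K$: since $K$ is compact, $I+K$ is Fredholm, so $N(I+K)$ is finite-dimensional while $R(I+K)$ is closed and of finite codimension, say $d$, in $F$. I would then fix a closed complement $M$ of $N(I+K)$ in $F$ (which exists because $N(I+K)$ is finite-dimensional), so that $(I+K)|_M\colon M\to R(I+K)$ is a continuous bijection between Banach spaces; by the open mapping theorem it admits a bounded inverse $J\colon R(I+K)\to M\subseteq F$, and in particular $(I+K)\,J=I$ on $R(I+K)$.

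The next step is to show that $T$ is onto. From $T\tilde{S}=I+K$ one gets $R(T)\supseteq R(I+K)$. Since $R(I+K)$ is closed and of finite codimension, every intermediate subspace --- in particular $R(T)$ --- is closed: passing to the finite-dimensional quotient $F/R(I+K)$, where all subspaces are closed, and pulling back along the continuous quotient map does the job. Once $R(T)$ is known to be closed, the closed range theorem together with the hypothesis $N(T^*)=\{0\}$ forces $R(T)=F$, because the closure of $R(T)$ equals the pre-annihilator of $N(T^*)$, which is all of $F$.

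Now I would build the right inverse. Choose a (necessarily finite-dimensional) complement $G$ of $R(I+K)$ in $F$ with basis $g_1,\dots,g_d$, and let $P$ denote the projection of $F$ onto $R(I+K)$ along $G$; it is bounded since $R(I+K)$ is closed and $G$ is finite-dimensional. Using surjectivity of $T$, pick $e_i\in E$ with $Te_i=g_i$ and define $S_0\colon G\to E$ by $S_0(g_i)=e_i$, extended linearly; $S_0$ is automatically bounded and $TS_0=I_G$. Finally set $S:=\tilde{S}\,J\,P+S_0(I-P)\colon F\to E$, which is bounded as a sum of compositions of bounded operators. For $f\in F$, decomposing $f=Pf+(I-P)f$ with $Pf\in R(I+K)$ and $(I-P)f\in G$ gives
$$
TSf=T\tilde{S}J(Pf)+TS_0(I-P)f=(I+K)J(Pf)+(I-P)f=Pf+(I-P)f=f ,
$$
using the first step for the middle equality and the construction of $S_0$ for the last one; hence $TS=I_F$.

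I do not expect a genuine analytic obstacle here: the argument is essentially a bookkeeping exercise assembling standard tools (Fredholm alternative, complementation of finite-dimensional and of closed finite-codimensional subspaces, the open mapping theorem, and the closed range/annihilator duality). The one point that must be handled with care is that $I+K$ is \emph{not} surjective in general, so $R(T)=F$ has to be extracted from the weaker fact $R(T)\supseteq R(I+K)$ (with $R(I+K)$ closed of finite codimension) together with $N(T^*)=\{0\}$; after that, the only delicate bit is to splice the ``good'' inverse $\tilde{S}J$ on $R(I+K)$ with the finite-rank correction $S_0$ on the complement $G$ so that the projections $P$ and $I-P$ cancel correctly.
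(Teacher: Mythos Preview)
Your proof is correct and follows the same overall architecture as the Bourgain--Brezis argument the paper adapts: use Fredholm theory for $I+K$ to get a finite-dimensional kernel and a closed finite-codimensional range, invert $I+K$ on a closed complement of the kernel, and then splice $\tilde{S}\circ(\text{inverse})$ on $R(I+K)$ with a finite-rank lift on a complement.

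The one point where you diverge from the paper is in how you produce preimages on the complement of $R(I+K)$. You first argue that $R(T)$ is closed (as an intermediate subspace between $R(I+K)$ and $F$) and then invoke $N(T^*)=\{0\}$ via the annihilator identity to conclude $R(T)=F$; only afterwards do you pick an arbitrary complement $G$ and lift a basis. The paper instead uses $N(T^*)=\{0\}$ only to get \emph{density} of $R(T)$ (Hahn--Banach), and then applies the density lemma (Lemma~\ref{lemma_complementing_density}) to choose the complement $Z$ directly inside the dense set $R(T)$ --- in fact inside $T(C^{\infty}_c)$. Surjectivity of $T$ then falls out for free from $F=(I+K)(F)\oplus Z\subset R(T)$. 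For the abstract lemma the two routes are equivalent, but the paper's choice buys something later: the basis vectors $\bar{e}_\alpha$ can be taken in $C^{\infty}_c$, which is exactly what is needed to make the right inverse universal across the scales of Sobolev and H\"older spaces (Lemma~\ref{lm_univ_prop} and Theorem~\ref{th-universal-property}). Your construction gives $e_i\in E$ with no extra regularity, so it proves Lemma~\ref{lemma_fa} but would not, as written, feed into the universality arguments.
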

The above lemma is then applied to $E=W_0^{1,p}(\Omega;\re^n),$ $F=L^{p}_{\sharp}(\Omega)$ and $T=T_0=\diw,$ where the condition $N(T_{0}^{\ast})=\{0\}$ follows from the fact that $T_{0}^{\ast}$ acts on $L^p_{\sharp}$ (and not $L^p$).  

The strategy adopted in \cite{Bourgain-Brezis} can be adapted in various settings. It can be exploited in any higher order Sobolev spaces or H\"older spaces, see \cite{HPNG-thesis}, to get an existence theory for \eqref{eq:a is zero} under sharp regularity assumptions on the domain \(\Omega\). For the equation \eqref{eq78}, we will heavily rely on a minor adaptation of the proof of Lemma \ref{lemma_fa} (with $T=T_a=\diw\cdot+\langle a,\cdot\rangle$).

To the best of our knowledge, the right inverse \(S\) constructed in the proof of \cite[Theorem 2']{Bourgain-Brezis} depends on the exponent \(p\). In order to get a \emph{universal} right inverse to the divergence operator, at least in the scale of Lebesgue spaces, one can rely on the construction due to Bogovski \cite{Bogovski}, see also \cite[Theorem 4.1]{Acosta-Duran-Muschietti}:

\begin{theorem}\label{th-Bogovski}
Assume that \(\Omega\) is a Lipschitz set. Then there exists a linear operator
\[
S_0 : \bigcup_{1<p<\infty} L^{p}_\sharp(\Omega) \to \bigcup_{1<p<\infty}W^{1,p}_0(\Omega;\R^n)
\]
such that \(T\circ S_0 (f)= f\) for every \(f\in \bigcup_{1<p<\infty} L^{p}_\sharp(\Omega)\). Moreover, for every \(p\in (1,\infty)\), \(S_0\) is continuous from \(L^{p}_\sharp(\Omega)\) into \(W^{1,p}_0(\Omega;\R^n)\).
\end{theorem}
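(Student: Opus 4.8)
The plan is to prove Theorem \ref{th-Bogovski} through the explicit integral operator of Bogovski, treating first a domain that is star-shaped with respect to a ball and then reducing the general Lipschitz case to this model situation by a covering argument. First I would handle the model case: assume $\Omega$ is star-shaped with respect to an open ball $B$ with $\overline{B}\subset\Omega$, fix $\theta\in C_c^\infty(B)$ with $\intomega\theta=1$, and for $f\in C_c^\infty(\Omega)$ with $\intomega f=0$ define
\[
S_0 f(x)=\intomega f(y)\,\frac{x-y}{|x-y|^{n}}\int_{|x-y|}^{\infty}\theta\Big(y+s\,\tfrac{x-y}{|x-y|}\Big)s^{n-1}\,ds\,dy.
\]
A change of variables together with an integration by parts gives the two basic facts: $S_0 f\in C_c^\infty(\Omega;\R^n)$ — its support stays inside $\Omega$ precisely because $\Omega$ is star-shaped with respect to $B$ — and $\diw S_0 f=f-\big(\intomega f\big)\theta=f$.

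Next I would establish the $L^p$ estimate in this model case. Differentiating $S_0 f$ under the integral sign produces a kernel that is singular on the diagonal $\{y=x\}$; splitting off the principal value, one finds that $\p_j(S_0 f)_i(x)$ equals a principal-value integral against a kernel $K_{ij}(x,x-y)$ which, for each fixed $x$, is smooth away from the origin, homogeneous of degree $-n$ in its second variable and of vanishing spherical mean — with all the relevant bounds uniform in $x\in\Omega$ — plus a term $G_{ij}(x)f(x)$ with $G_{ij}\in L^\infty(\Omega)$. Thus $\nabla S_0 f$ is a Calder\'on--Zygmund singular integral operator with variable kernel plus a bounded multiplication operator, and the Calder\'on--Zygmund theorem yields $\|\nabla S_0 f\|_{L^p}\le C_p\|f\|_{L^p}$ for every $p\in(1,\infty)$; since $S_0 f$ vanishes near $\delomega$, Poincar\'e's inequality upgrades this to a bound in $W^{1,p}_0(\Omega;\R^n)$. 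Because $C_c^\infty(\Omega)\cap L^p_\sharp(\Omega)$ is dense in $L^p_\sharp(\Omega)$ and the defining formula does not see $p$, the operator extends to a bounded $S_0:L^p_\sharp(\Omega)\to W^{1,p}_0(\Omega;\R^n)$ whose value on a given $f$ is independent of which exponent $p$ one uses — this is exactly the universality in the scale of Lebesgue spaces.

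Finally, for a general Lipschitz set I would cover $\Omegabar$ by finitely many balls $U_1,\dots,U_N$ such that each $\Omega_k:=\Omega\cap U_k$ is star-shaped with respect to a ball (possible since $\Omega$ is Lipschitz), take a subordinate partition of unity $(\varphi_k)$ with $\sum_k\varphi_k=1$ on $\Omegabar$, and write $f=\sum_k\varphi_k f$. The functions $\varphi_k f$ are supported in $\Omega_k$ but need not be mean-free; to correct this I would use connectedness of $\Omega$ to fix a spanning tree of the intersection graph of the $\Omega_k$ and, for each edge, a fixed bump supported in the corresponding overlap with unit integral, then redistribute the masses $c_k=\intomega\varphi_k f$ (whose sum is $\intomega f=0$) along this tree. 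This yields, linearly and with $L^p$ bounds independent of both $p$ and the decomposition, functions $f_k$ with $\supp f_k\subset\Omega_k$, $\intomega f_k=0$ and $\sum_k f_k=f$. Setting $S_0 f:=\sum_k S_0^{(k)}(f_k)$, with $S_0^{(k)}$ the model operator on $\Omega_k$ and each summand extended by $0$ outside $\Omega_k$, produces the asserted operator: linear, $p$-independent, continuous from $L^p_\sharp(\Omega)$ into $W^{1,p}_0(\Omega;\R^n)$, and satisfying $\diw S_0 f=\sum_k f_k=f$. The main obstacle is the $L^p$-boundedness of $\nabla S_0$ in the model case — identifying the derivative as a bona fide variable-coefficient singular integral and verifying the kernel estimates uniformly in $x$ — together with checking, in the patching step, that the mass-redistribution map is bounded on every $L^p$ with constants independent of the cover. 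The remaining points (the pointwise identity $\diw S_0 f=f$, the support properties, density, and consistency across $p$) are comparatively routine once the formula is in place.
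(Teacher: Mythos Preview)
Your sketch is correct and follows the standard Bogovski route. Note, however, that the paper does not supply its own proof of this statement: Theorem~\ref{th-Bogovski} is quoted as a known result, with a reference to Bogovski's original construction and to \cite[Theorem~4.1]{Acosta-Duran-Muschietti}, and is then used as a black box in the construction of $S_a$. What you have written is precisely the proof one finds in those references --- the explicit integral operator on a domain star-shaped with respect to a ball, the identification of $\nabla S_0$ as a variable-kernel Calder\'on--Zygmund operator plus a bounded multiplier, and the decomposition of a bounded Lipschitz set into finitely many such star-shaped pieces with a linear, $p$-independent mass-redistribution step. So your approach and the paper's ``proof'' agree in the sense that both point to the same construction; you have simply unpacked what the paper takes for granted.
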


The proof of Theorem \ref{th-main} relies on the existence of such an \(S_0\). We first observe that \(T_a\circ S_0 = I +K\) where \(I\) is the identity on \(L^{p}_{\sharp}\) and \(K\) is a compact operator from \(L^{p}_{\sharp}\) into itself. We then prove that \(N(T_{a}^*)=\{0\}\). We next apply Lemma \ref{lemma_fa} with \(T=T_a\) and \(\Tilde{S}=S_0\) to get the desired operator \(S_a\). In order to check that \(S_a\) has the universal properties stated in Theorem \ref{th-main}, we exploit the universal property of the operator \(S_0\) given by Theorem \ref{th-Bogovski}. We also need to detail (and slightly adapt) the explicit construction in the proof of Lemma \ref{lemma_fa}, to ensure that the resulting operator \(S_a\) still possesses the universal property in the scale of Lebesgue spaces.

The operator \(S_0\) given by Theorem \ref{th-Bogovski} maps \(W^{k,p}\) into \(W^{k+1,p}_0\) for every \(k\in \N\). However,  \(W^{k+1,p}_0\), which is the closure of \(C^{\infty}_c\) in \(W^{k+1,p}\), is strictly contained in \(W^{k+1,p}_z\) defined as the intersection \(W^{1,p}_0\cap W^{k+1,p}\). Hence, we can not use this map \(S_0\) in the setting of Theorem \ref{th-universal-property}. In the framework of higher order Sobolev spaces and H\"older spaces, we will rely instead on the following construction \cite[Theorem 9.2, Remark 9.3]{Csato-Dacorogna-Kneuss}, which requires that \(\Omega\) be at least \(C^2\).

\begin{theorem}
\label{theorem:a is zero higher regularity}
Assume that $\Omega$ is a bounded open set of class \(C^2\). 
Then there exists a linear continuous operator $\overline{S_0}$ satisfying the same conclusion as in Theorem \ref{th-Bogovski} with the following additional properties:
\begin{enumerate}
\item We assume that \(\Omega\) is  of class \(C^{m+2}\) for some \(m\in \N\). Then for every \(p\in (1,\infty)\),  \(\overline{S_0}\) maps continuously \((W^{m,p}\cap L^{1}_{\sharp})(\Omega)\) into \(W^{m+1,p}_z(\Omega;\R^n)\).
\item We assume that \(\Omega\) is of class \(C^{m+2, \alpha}\) for some \(m\in \N\) and \(\alpha \in (0,1)\).
Then \(\overline{S_0}\) maps continuously \((C^{m,\alpha}\cap L^{1}_{\sharp})(\overline{\Omega})\) into \(C^{m+1,\alpha}_z(\overline{\Omega};\R^n)\). 
\end{enumerate}
\end{theorem}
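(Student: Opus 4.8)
The plan is to build $\overline{S_0}$ by localization, reducing the global problem to two model situations, each equipped with an \emph{explicit} universal right inverse of $\diw$: the divergence equation on a ball for compactly supported data (handled by Bogovski's operator, which on a ball is bounded in all the scales $W^{m,p}$ and $C^{m,\alpha}$ at once and sends compactly supported data with zero mean to compactly supported fields), and the divergence equation on a half-ball $B^+=\{y\in B:\,y_n>0\}$ with zero Dirichlet data on the flat face $\{y_n=0\}\cap B$. First I would fix, using that $\Omega$ is of class $C^2$ (resp.\ $C^{m+2}$, resp.\ $C^{m+2,\alpha}$), a finite open cover $U_0,\dots,U_N$ of $\Omegabar$ with $U_0\Subset\Omega$, together with $C^{m+2}$ (resp.\ $C^{m+2,\alpha}$) diffeomorphisms $\varphi_i$, $i\geq 1$, straightening $\delomega\cap U_i$ onto a piece of a hyperplane, and a subordinate partition of unity $\{\theta_i\}$ with $\supp\theta_i$ compactly inside a smaller $U_i'\Subset U_i$. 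The requirement that the boundary be one degree smoother than the target space is precisely what is needed so that pre- and post-composition with $\varphi_i,\varphi_i^{-1}$, together with the Piola transform (which intertwines divergences), preserves $W^{m+1,p}$ and $C^{m+1,\alpha}$ regularity; this is the source of the (presumably non-optimal) hypothesis $\Omega\in C^{m+2}$.

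The key local ingredient is an explicit universal operator $R$ on $B^+$ with $\diw R(g)=g$, with $R(g)=0$ on the flat face, and with $R(g)$ supported in a fixed proper subset of $B^+$ (away from the spherical part of $\partial B^+$) whenever $g$ is supported in a slightly smaller half-ball, so that extension by zero is legitimate. Such an $R$ can be constructed by first integrating $g$ in the $y_n$-direction to produce one component that cancels most of the divergence while controlling all derivatives up to the flat boundary, then correcting the remaining $(n-1)$-dimensional divergence by a lower-order Bogovski operator in the tangential variables, and finally multiplying by a fixed cut-off chosen so that it only acts where the partially corrected field already vanishes, hence without reintroducing a divergence. Since every step is an integration, a multiplication by a fixed smooth function, or a lower-dimensional Bogovski operator, $R$ maps $W^{m,p}(B^+)$ into $W^{m+1,p}_z(B^+;\re^n)$ and $C^{m,\alpha}(\overline{B^+})$ into $C^{m+1,\alpha}_z(\overline{B^+};\re^n)$ continuously, \emph{simultaneously} for all $m,p,\alpha$, and with no integral condition on $g$ (only the zeroth trace needs to vanish, which is all that $W^{m+1,p}_z$ demands). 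Transporting $R$ through $\varphi_i$ yields, for each boundary chart, a universal operator $S_i$ solving $\diw v=g$, $v=0$ on $\delomega\cap U_i$, for $g$ supported in $U_i'$, and extending by zero to $\Omega$.

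Then I would set $\overline{S_0}(f)=\sum_{i=0}^{N}S_i(f_i)$, where $S_0$ is Bogovski's operator on a ball $B$ with $\overline{B}\subset\Omega$ containing $\overline{U_0'}$, and the $f_i$ are a redistribution of the pieces $\theta_i f$: one fixes functions $\psi_{ij}\in C^\infty_c(U_i'\cap U_j')$ with prescribed nonzero integrals and sets $f_i=\theta_i f-\sum_j c_{ij}(f)\psi_{ij}$, where the scalars $c_{ij}(f)$ are chosen linearly in $f$ (as integrals of $f$ against fixed test functions, hence continuous even on $L^1$) so that each $f_i$ has zero mean over the domain where it is solved, $\supp f_i\subset U_i'$, and $\sum_i f_i=f$ on $\Omega$. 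With this bookkeeping, $\diw\overline{S_0}(f)=\sum_i f_i=f$ on $\Omega$, and $\overline{S_0}(f)=0$ on $\delomega$ since every summand vanishes there and is supported inside its chart; continuity in each scale follows from the universality of $S_0$ and the $S_i$ together with the continuity of $f\mapsto f_i$ in every norm. That $\overline{S_0}$ is a \emph{single} operator independent of $(m,p,\alpha)$, rather than a family, is automatic from this construction, which in particular reproves Theorem~\ref{th-Bogovski}.

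The main obstacle is twofold. First, the redistribution step must be arranged \emph{linearly and once and for all}: one needs each localized piece to carry the correct mean-value and support so it can be solved in its own chart, while the sum is unchanged and the correcting functions $\psi_{ij}$ stay compactly inside the overlaps (so they do not pollute $\delomega$), and this forces some care with the combinatorics of the cover and with checking that the corrections do not destroy the zero boundary trace. Second, and more delicate, is the construction of the half-ball operator $R$ achieving \emph{at once} a genuine gain of one derivative, zero trace on the flat face, controlled support, and boundedness across the full Sobolev \emph{and} Hölder scales — the Schauder estimates up to the flat boundary being the part where the naive normal-integration formula is insufficient and must be combined with the tangential lower-order correction and the carefully placed cut-off.
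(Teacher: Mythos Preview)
The paper does not prove this theorem at all: it is quoted from \cite[Theorem 9.2, Remark 9.3]{Csato-Dacorogna-Kneuss} and used as a black box throughout Section~\ref{section:higher regularity}. There is thus no proof in the paper to compare your proposal against.

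That said, your localization scheme (partition of unity, interior Bogovski, boundary charts straightened and transported via the Piola transform, linear redistribution of means) is the standard skeleton and is correct in outline; it is also broadly the architecture used in the cited reference. The genuine gap in your proposal is the half-ball operator $R$. Normal integration $v_n(y)=\int_0^{y_n} g(y',t)\,dt$ already gives $\diw(0,\dots,0,v_n)=g$ with $v_n=0$ on $\{y_n=0\}$, so there is no ``remaining $(n-1)$-dimensional divergence'' to correct; the problem is purely one of regularity. If $g\in W^{m,p}$ then $v_n$ has $m+1$ derivatives only when at least one of them is $\partial_{y_n}$; the purely tangential derivatives $\partial_{y'}^{\alpha} v_n$ with $|\alpha|=m+1$ are not controlled, so $v_n\notin W^{m+1,p}$ in general (and the H\"older case is analogous). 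A tangential Bogovski operator, which solves a divergence equation in $y'$, does not repair this loss: it would act on a datum built from $v_n$ that is itself missing one tangential derivative, and there is no mechanism in your description by which it recovers it. Note also that your claim ``no integral condition on $g$'' is incompatible with the support constraint you impose: if $R(g)$ vanishes on the flat face and is compactly supported away from the spherical face, the divergence theorem forces $\int_{B^+}g=0$.

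You correctly flag at the end that the normal-integration formula is the delicate point, but the fix you sketch does not close the gap. In \cite{Csato-Dacorogna-Kneuss} the construction proceeds quite differently, through an elliptic regularization (a boundary-value problem for the Laplacian producing a gradient field with the correct divergence and the full gain of one derivative, followed by a correction of its boundary trace), rather than by a direct one-variable integration.
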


In the proof of Theorem \ref{th-universal-property}, the above map \(\overline{S_0}\) plays a crucial role, in a similar way as the operator \(S_0\) is used 
in the proof of Theorem \ref{th-main}. We also rely on standard properties of the pointwise multiplication  in  higher order Sobolev spaces and in  H\"older spaces.

\subsection{Plan of the paper}
 
The next section is dedicated to the proof of Theorem \ref{th-main} while Theorem \ref{th-universal-property} is proven in Section 3. In the last section, we discuss the non-existence in $L^1$ and $L^{\infty}.$ In contrast to the previous existence and regularity results, the proof is exactly the same as for the case $a=0.$ Finally, for the convenience of the reader, we have gathered in the Appendix some technical tools.

\section{Construction of \(S_a\) on \(L^{p}\)}

The construction of $S_a$ is inspired from the proof of Lemma \ref{lemma_fa} in \cite{Bourgain-Brezis}.
In our setting, we take \(E=W^{1,p}_0(\Omega;\R^n)\), \(F=L^{p}(\Omega)\) for some \(p\in (1,\infty)\) and  
\begin{equation}
 \label{eq:T definition}
   T_a(u)=\diw u + \langle a , u \rangle.
\end{equation}
Throughout this section, we assume that \(\Omega\) is Lipschitz.
We first observe that $T_a$ is continuous:

\begin{lemma}
\label{lemma:T continuous}
Let $T_a$ be given by \eqref{eq:T definition}. If $q> n,$  $a\in L^q(\Omega),$  then for every $1\leq p\leq q$,  $T_a$ maps continously \(W^{1,p}(\Omega)\) into $L^p(\Omega)$.
\end{lemma}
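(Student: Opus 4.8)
The plan is to bound the two summands of $T_a(u)=\diw u + \langle a,u\rangle$ separately. The first term is immediate: $u\mapsto\diw u$ is a first order linear differential operator with constant coefficients, so $\|\diw u\|_{L^p}\leq \sqrt{n}\,\|\nabla u\|_{L^p}\leq C\|u\|_{W^{1,p}}$ for every $u\in W^{1,p}(\Omega;\R^n)$. Everything thus reduces to estimating the lower order term $\langle a,u\rangle$ in $L^p$.

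For the lower order term I would use Hölder's inequality. Define $s\in[p,\infty]$ by $\tfrac1s=\tfrac1p-\tfrac1q$ (with $s=\infty$ when $p=q$); this is meaningful precisely because $p\leq q$. Then Hölder gives $\|\langle a,u\rangle\|_{L^p}\leq\|a\|_{L^q}\|u\|_{L^s}$, so it suffices to establish a continuous embedding $W^{1,p}(\Omega)\hookrightarrow L^s(\Omega)$. Since $\Omega$ is a bounded Lipschitz set, the Sobolev and Morrey embedding theorems are available, and I would split into cases according to the position of $p$ relative to $n$.

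If $p<n$, Sobolev's embedding gives $W^{1,p}(\Omega)\hookrightarrow L^{p^*}(\Omega)$ with $p^*=\tfrac{np}{n-p}$, and the inequality $s\leq p^*$ is equivalent to $\tfrac1p-\tfrac1q\geq\tfrac1p-\tfrac1n$, i.e.\ to $q\geq n$, which holds; since $\Omega$ has finite measure, $L^{p^*}(\Omega)\hookrightarrow L^s(\Omega)$, and we are done. If $p=n$, then $q>n$ forces $s=\tfrac{nq}{q-n}<\infty$, and $W^{1,n}(\Omega)\hookrightarrow L^s(\Omega)$ because $W^{1,n}$ embeds into every $L^r(\Omega)$ with $r<\infty$ on a bounded Lipschitz domain. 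If $p>n$ (in particular whenever $p=q$, since then $p=q>n$), Morrey's embedding gives $W^{1,p}(\Omega)\hookrightarrow L^\infty(\Omega)$, and, because $\Omega$ is bounded and $p\leq q$, $\|a\|_{L^p}\leq|\Omega|^{\frac1p-\frac1q}\|a\|_{L^q}$, so that $\|\langle a,u\rangle\|_{L^p}\leq\|a\|_{L^p}\|u\|_{L^\infty}\leq C\|a\|_{L^q}\|u\|_{W^{1,p}}$. Combining the cases yields $\|T_a(u)\|_{L^p}\leq C(1+\|a\|_{L^q})\|u\|_{W^{1,p}}$ with $C=C(n,p,q,\Omega)$, which is the claim.

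The only genuinely substantive point — and the place where the hypothesis $q>n$ is actually used — is the verification that the Hölder exponent $s$ lies within the range reached by the Sobolev embedding of $W^{1,p}$, namely the equivalence $s\leq p^*\iff q\geq n$ (and its borderline analogues at $p=n$ and $p>n$). There is a harmless edge case worth a line: for $p=1$ and $n\geq 2$ one has $p^*=\tfrac n{n-1}$ and $s=q'$, and once more $q'\leq p^*\iff q\geq n$; for $n=1$ one invokes $W^{1,1}(\Omega)\hookrightarrow L^\infty(\Omega)$ directly. Everything else is a routine application of Hölder's inequality and of the classical embedding theorems on bounded Lipschitz domains.
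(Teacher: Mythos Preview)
Your proof is correct and follows essentially the same route as the paper: reduce to estimating $\langle a,u\rangle$ in $L^p$, then split into the three cases $p<n$, $p=n$, $p>n$ and combine H\"older's inequality with the Sobolev (respectively Morrey) embedding, using $q\geq n$ (respectively $q>n$) to check that the exponents match. Your write-up is in fact slightly more explicit about the exponent bookkeeping and the edge case $p=1$ than the paper's.
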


\begin{proof}
We recall that for two functions $f\in L^r$ and $g\in L^s$,  we have 
\begin{equation}
 \label{eq:multipliciation in Lp}
  fg\in L^t\quad\text{ for all $t$ satisfying }\quad \frac{1}{r}+\frac{1}{s}\leq \frac{1}{t}.
\end{equation}
We only need to show that $\langle a,u\rangle\in L^p$ with the appropriate estimate. If $p<n$ then by the Sobolev embedding,  $u\in L^{p^*}$ with $p^*=\frac{np}{n-p}$. Using  \eqref{eq:multipliciation in Lp} and the assumption $q\geq n$, it follows that $\langle a,u\rangle\in L^p$. For  the case $p=n$, one  uses that $W^{1,n}$ embeds into any $L^s$ as long as $s<\infty$ and one applies again \eqref{eq:multipliciation in Lp} together with the fact that $q>n=p$. In the third case $n<p\leq q$, one relies on the  Morrey embedding \(W^{1,p}\subset L^{\infty}\) to conclude.
\end{proof}

\smallskip

The next lemma  is the key tool to prove that $N(T_{a}^{\ast})=\{0 \}.$ This is the step where the assumption that \emph{\(a\) is not a gradient} plays a crucial role.

\begin{lemma}\label{lemma_kernel}
Let $\Omega\subset\re^n$ be a bounded  open Lipschitz set and  \(a\in L^{q}(\Omega;\R^n)\) for some \(q\in [1,\infty]\). We assume that \(a\) is not a gradient: there exists no $A\in W^{1,q}(\Omega)$ such that $\nabla A=a.$ If \(g\in L^{q'}(\Omega)\) is such that \(\nabla g =g a\) in the sense of distributions, i.e.
$$
  \intomega g\diw u =-\intomega \langle g a,u\rangle\quad \text{for all }u\in C_c^{\infty}(\Omega,\re^n),
$$
then 
$$
  g\equiv 0.
$$
\end{lemma}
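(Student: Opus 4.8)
The plan is to argue by contradiction, deriving from $g\not\equiv 0$ a function $A\in W^{1,q}(\Omega)$ with $\nabla A=a$. I would first record the elementary reductions. Since $\Omega$ is bounded, $g\in L^{q'}(\Omega)\subset L^1(\Omega)$, so by H\"older $ga\in L^1(\Omega)$ and the distributional identity $\nabla g=ga$ already gives $g\in W^{1,1}(\Omega)$. The chain rule in $W^{1,1}$ then yields $|g|\in W^{1,1}(\Omega)$ with $\nabla|g|=(\operatorname{sign}g)\nabla g=(\operatorname{sign}g)\,g\,a=|g|\,a$, and since $|g|\equiv 0$ if and only if $g\equiv 0$, we may replace $g$ by $|g|$ and assume $g\ge 0$. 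The key device is the family $h_\epsilon:=\log(g+\epsilon)\in W^{1,1}(\Omega)$, $\epsilon>0$, for which $\nabla h_\epsilon=\frac{\nabla g}{g+\epsilon}=\frac{g}{g+\epsilon}\,a$, so that $|\nabla h_\epsilon|\le|a|$ pointwise a.e., uniformly in $\epsilon$. The proof splits according to the measure of the vanishing set $Z:=\{g=0\}$.

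\textit{Claim A: if $|Z|>0$ then $g\equiv 0$.} I would use the functions $v_\epsilon:=h_\epsilon-\log\epsilon=\log(1+g/\epsilon)$, which are nonnegative, vanish a.e. on $Z$, and satisfy $\nabla v_\epsilon=\nabla h_\epsilon$, hence $\|\nabla v_\epsilon\|_{L^1}\le\|a\|_{L^1}$. Since $\Omega$ is connected, the Poincar\'e inequality for functions vanishing on a set of positive measure gives $\|v_\epsilon\|_{L^1(\Omega)}\le C(\Omega,|Z|)\,\|\nabla v_\epsilon\|_{L^1}\le C\|a\|_{L^1}$, uniformly in $\epsilon$. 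On the other hand, as $\epsilon\downarrow 0$ the $v_\epsilon$ increase pointwise to $(+\infty)\,\mathbf 1_{\Omega\setminus Z}$, so monotone convergence forces $|\Omega\setminus Z|=0$, i.e.\ $g\equiv 0$.

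\textit{Claim B: it is impossible that $g>0$ a.e.} Assume $g>0$ a.e. Then $\frac{g}{g+\epsilon}\to 1$ a.e.\ as $\epsilon\downarrow 0$, so by dominated convergence (dominating function $|a|\in L^q$) $\nabla h_\epsilon\to a$ in $L^q(\Omega)$, and likewise $\nabla h_\epsilon-\nabla h_\delta\to 0$ in $L^q$ as $\epsilon,\delta\downarrow 0$. Moreover, for $0<\epsilon\le\delta$ we have $h_\epsilon-h_\delta=\log\frac{g+\epsilon}{g+\delta}\in[\log(\epsilon/\delta),0]$, so this difference is bounded; together with $\nabla(h_\epsilon-h_\delta)\in L^q$ it lies in $W^{1,q}(\Omega)$, and the Poincar\'e--Wirtinger inequality in $L^q$ (again using connectedness of $\Omega$) gives $\|\bar h_\epsilon-\bar h_\delta\|_{L^q}\le C\,\|\nabla h_\epsilon-\nabla h_\delta\|_{L^q}$, where $\bar h_\epsilon:=h_\epsilon-\fint_\Omega h_\epsilon$. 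Hence $(\bar h_\epsilon)$ is Cauchy in $L^q(\Omega)$; setting $A:=\lim_{\epsilon\to0}\bar h_\epsilon\in L^q(\Omega)$ and passing to the limit in $\nabla\bar h_\epsilon=\frac{g}{g+\epsilon}a$ (tested against $C_c^\infty(\Omega;\re^n)$) shows $A\in W^{1,q}(\Omega)$ with $\nabla A=a$, contradicting the assumption that $a$ is not a gradient.

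Combining the two claims finishes the proof: since $g\ge 0$, the set $Z$ is either null, which Claim B excludes, or of positive measure, in which case Claim A gives $g\equiv 0$; either way $g\equiv 0$. The delicate point — and the reason the hypotheses are sharp — is the control of $g$ on and near its zero set: connectedness of $\Omega$ is what makes the Poincar\'e-type inequalities available, and the integrability $a\in L^q$ with $q\ge 1$, rather than merely $a\in L^1_{\mathrm{loc}}$, is what turns the crude bounds on $\log(1+g/\epsilon)$ and $\log(g+\epsilon)$ into effective estimates. As an alternative to Claim B one may bound $\bar h_\epsilon$ in $W^{1,1}(\Omega)$ by $C\|a\|_{L^1}$, extract via Rellich--Kondrachov an $L^1$-convergent subsequence $\bar h_{\epsilon_k}\to A$, and then upgrade $A\in W^{1,1}$ with $\nabla A=a\in L^q$ to $A\in W^{1,q}$ through the Sobolev--Poincar\'e inequalities on the bounded Lipschitz domain $\Omega$.
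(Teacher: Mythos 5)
Your proof is correct, but it takes a genuinely different route from the paper's. The paper argues ``by hand'' on axis-parallel cubes: solving the ODE $\partial_i g = a_i g$ along each coordinate line yields the explicit representation $g(y)=g(x)e^{D(y,x)}$ with $D(y,x)=\sum_i\int_{x_i}^{y_i}a_i$, from which the trichotomy ($g\equiv 0$, $g>0$ a.e., or $g<0$ a.e.\ on each cube, then globally by connectedness) is read off, and the primitive is identified as $\ln g$ by a direct Fubini verification of $\int \ln g\,\partial_i\varphi=-\int a_i\varphi$. You instead replace $g$ by $|g|$ and smooth the logarithm: the functions $h_\epsilon=\log(g+\epsilon)$ obey $|\nabla h_\epsilon|\leq |a|$ uniformly, so the dichotomy between a null and a non-null zero set $Z$ is obtained softly from a Poincar\'e inequality applied to $\log(1+g/\epsilon)$ (Claim A), and the primitive is produced as the limit of the mean-normalized $\bar h_\epsilon$ (Claim B). This buys you a computation-free argument that never touches $\ln g$ directly (which is advantageous, since $g$ can vanish) and replaces the line-by-line Fubini manipulations with standard Poincar\'e and compactness tools.

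Two remarks. First, the dominated-convergence step in your primary version of Claim B (``$\nabla h_\epsilon\to a$ in $L^q$'') fails when $q=\infty$: $\|\tfrac{\epsilon}{g+\epsilon}a\|_{L^\infty}$ need not go to zero unless $g$ is bounded below. Your alternative at the end -- a uniform $W^{1,1}$ bound on $\bar h_\epsilon$, Rellich--Kondrachov to extract an $L^1$ limit $A$, identification $\nabla A=a$ by testing, and then upgrading $A\in W^{1,1}$ with $\nabla A\in L^q$ to $A\in W^{1,q}(\Omega)$ via Sobolev--Poincar\'e -- is the one that covers the full range $q\in[1,\infty]$, so it should be taken as the actual proof of Claim~B rather than as a remark. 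Second, both your proof and the paper's rely on the connectedness of $\Omega$ (you invoke it in the Poincar\'e estimates of Claims A and B); the paper reduces to this case at the outset, and you should too before applying those inequalities.
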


\begin{remark}
The proof of this lemma is much simpler if one assumes $a\in C^0(\Omega;\re^n).$ Applying repeatedly the Sobolev and Morrey embeddings,  one obtains that $g\in C^1$ and $\nabla g=ga$ in a classical sense. 
Let $x,y\in\Omega$ and assume that $\gamma$ is a $C^1$ curve connecting $x$ and $y$ with $\gamma(0)=x$ and $\gamma(1)=y.$ Thus $g\circ\gamma$  satisfies the differential equation
$
(g\circ\gamma)'=\langle \nabla g\circ\gamma,\gamma'\rangle=(g\circ\gamma)\langle a\circ\gamma,\gamma'\rangle
$
and it follows that
\begin{equation*}
  g(y)=g(x)e^{\int_0^1\langle a(\gamma(t)),\gamma'(t)\rangle dt}.
\end{equation*}
This implies that either $g\equiv 0$ or $g(x)\neq 0$ for all $x\in\Omega$. But the second case cannot occur, because if $g$ never vanishes, then $A=\ln |g| \in C^1(\Omega)$ satisfies
$\nabla A=\nabla g/g=(ga)/g=a,$
which is a contradiction to the hypothesis on $a.$
\end{remark}

\begin{proof} Without loss of generality, one can assume that \(\Omega\) is connected.
The equality \(\nabla g =g a\) implies that \(\nabla g \in L^{1}(\Omega)\) and thus \(g\in W^{1,1}(\Omega)\).
Let \(Q\) be a cube contained in \(\Omega\). Up to a dilation and an isometry, we can assume that \(Q=(0,1)^n \subset \Omega\).
For almost every \(x'\in (0,1)^{n-1}\), the map \(g_{x'}:x_n\mapsto g(x',x_n)\) belongs to \(W^{1,1}(0,1)\) while \(x_n\mapsto a_n(x',x_n)\) belongs to \(L^{1}(0,1)\) and moreover 
\[
g_{x'}'(x_n) = a_n(x',x_n)g_{x'}(x_n).
\]
It thus follows that for such \(x'\), for every \(x_n, y_n \in [0,1]\),
\begin{equation}\label{eq590}
g(x',y_n)=g(x',x_n) e^{\int_{x_n}^{y_n}a_n(x',t)\,dt}.
\end{equation}
Repeating the above argument in every direction parallel to the coordinate axes between
\[
(x_{1}, \dots, x_{i-1}, x_{i}, y_{i+1}, \dots, y_n) \textrm{ and } (x_{1}, \dots, x_{i-1}, y_{i}, y_{i+1}, \dots, y_n), \qquad 1\leq i \leq n,
\]
we deduce that for a.e. \(x,y\in Q\), \(g(y)=g(x)e^{D(y,x)}\) with
\begin{equation}\label{eq589}
D(y,x)=\sum_{i=1}^n \int_{x_i}^{y_i}a_i(x_1,\dots, x_{i-1}, t, y_{i+1}, \dots , y_n)\,dt.
\end{equation}
In particular, either \(g\equiv 0\) on \(Q\) or \(g>0\) a.e. on \(Q\) or \(g<0\) a.e. on \(Q\).

Since for every two cubes \(Q_1, Q_2 \subset \Omega\) such that \(Q_1\cap Q_2 \not=\emptyset\), the \emph{same} conclusion among the three above alternatives must hold true, the connectedness of \(\Omega\) implies that either \(g>0\) a.e. on \(\Omega\), or \(g<0\) a.e. on \(\Omega\) or \(g\equiv 0\) on \(\Omega\).

Assume by contradiction that \(g>0\) a.e. on \(\Omega\) and consider again the cube \(Q=(0,1)^n \subset \Omega\).
By the Fubini theorem and the fact that the function \(D\) defined in \eqref{eq589} belongs to \(L^{1}(Q\times Q)\), for a.e. \(x\in Q\), the function \(y\mapsto D(y,x)\)  belongs to \(L^{1}(Q)\). We fix such an \(x\) for which we further require  that \(g(x)>0\). Then the identity \(g(y)=g(x)e^{D(y,x)}\) shows that \(\ln g \in L^{1}(Q)\).

We claim that for every \(\varphi \in C^{\infty}_c(Q)\) and \(1\leq i \leq n\),
\begin{equation}\label{eq607}
\int_{Q} \ln g \, \partial_i \varphi = -\int_{Q}  a_i \varphi.
\end{equation}
Let us prove the claim for \(i=n\). By the Fubini theorem,
\begin{align*}
\int_{Q} \ln g \, \partial_n \varphi &= \int_{(0,1)^{n-1}}\,dx' \int_{0}^1 \ln g (x',t_n) \, \partial_n \varphi(x',t_n)\,dt_n\\
& =  \int_{(0,1)^{n-1}}\,dx'\int_{0}^1 \ln g (x',0)\,  \partial_n \varphi(x',t_n)\,dt_n \\
&\qquad  + \int_{(0,1)^{n-1}}\,dx'\int_{0}^1 \left(\int_{0}^{t_n}a_n(x',s)\,ds\right) \partial_n \varphi(x',t_n)\,dt_n
\end{align*}
where the last line follows from \eqref{eq590} with \(x_n=0\) and \(y_n=t_n\).  Now, by the Fubini theorem,
\[
\int_{0}^1 \left(\int_{0}^{t_n}a_n(x',s)\,ds\right) \partial_n \varphi(x',t_n)\,dt_n = -\int_{0}^{1} \varphi(x',s)a_n(x',s)\,ds. 
\]
Since 
\[
\int_{0}^1 \ln g (x',0)\, \partial_n \varphi(x',t_n)\,dt_n = \ln g(x',0) \int_{0}^1  \partial_n \varphi(x',t_n)\,dt_n=0,
\]
we get
\[
\int_{Q} \ln g \, \partial_n \varphi = -\int_{Q}a_n \varphi.
\]
We can repeat this calculation in every direction \(i=1, \dots n\) by using the identity corresponding to \eqref{eq590} where \(n\) is replaced by \(i\). This proves claim \eqref{eq607}. We deduce therefrom  that \(\ln g \in W^{1,q}(Q)\) with \(\nabla \ln g = a\).

Since this is true for every cube \(Q \subset \Omega\), this implies that \(a=\nabla (\ln g)\) on \(\Omega\), which contradicts the fact that \(a\) is not a gradient on \(\Omega\). Hence, we cannot have \(g>0\) a.e. The case \(g<0\) a.e. can be treated similarly. This proves that \(g\equiv 0\) as desired. 
\end{proof}

We proceed to explain how the above lemma implies that \(N(T_{a}^*)=\{0\}\).
\begin{lemma}\label{lm-nta-t}
Let $q>n$ and suppose that $a\in L^q(\Omega;\re^n)$ is  not the gradient of a $W^{1,q}$ function. Then for every \(1<p\leq q\), the operator \(T_a : u\in W^{1,p}(\Omega;\R^n) \mapsto \diw u + \langle a , u \rangle \in L^{p}(\Omega)\) satisfies: 
\[
N(T_{a}^*)=\{0\}.
\]
\end{lemma}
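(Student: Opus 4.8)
The plan is to compute the adjoint $T_a^{\ast}$ explicitly and to recognize that an element of its kernel is precisely an object to which Lemma \ref{lemma_kernel} applies. Following the setup of this section, we regard $T_a$ as a bounded operator from $E = W^{1,p}_0(\Omega;\R^n)$ into $F = L^{p}(\Omega)$, which is legitimate by Lemma \ref{lemma:T continuous}; then $T_a^{\ast}$ maps $F^{\ast} = L^{p'}(\Omega)$ into $E^{\ast} = W^{-1,p'}(\Omega;\R^n)$. For $g \in L^{p'}(\Omega)$ and $u \in W^{1,p}_0(\Omega;\R^n)$ we have
\[
\langle T_a^{\ast} g, u \rangle = \intomega g\,(\diw u + \langle a, u\rangle) = \intomega g\,\diw u + \intomega \langle g a, u\rangle,
\]
where the second integral is well defined because $g a \in L^{1}(\Omega)$: indeed $p \le q$ gives $\tfrac1{p'} + \tfrac1q \le \tfrac1{q'} + \tfrac1q = 1$, and $g \in L^{p'}$, $a \in L^{q}$.

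First I would note that $g \in N(T_a^{\ast})$ means $\langle T_a^{\ast} g, u\rangle = 0$ for every $u \in W^{1,p}_0(\Omega;\R^n)$. Since $C_c^{\infty}(\Omega;\R^n)$ is dense in $W^{1,p}_0(\Omega;\R^n)$ and $T_a^{\ast} g$ is a continuous functional on the latter, this is equivalent to $\langle T_a^{\ast} g, u\rangle = 0$ for all $u \in C_c^{\infty}(\Omega;\R^n)$, i.e. to
\[
\intomega g\,\diw u = - \intomega \langle g a, u\rangle \qquad \text{for all } u \in C_c^{\infty}(\Omega;\R^n),
\]
which is exactly the distributional identity $\nabla g = g a$ on $\Omega$ appearing in the hypothesis of Lemma \ref{lemma_kernel}.

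To finish, observe that $\Omega$ is bounded and $p \le q$ forces $p' \ge q'$, so $L^{p'}(\Omega) \subseteq L^{q'}(\Omega)$ and hence $g \in L^{q'}(\Omega)$. We may therefore apply Lemma \ref{lemma_kernel} with the exponent $q$ --- using the hypothesis that $a \in L^{q}(\Omega;\R^n)$ is not the gradient of any $W^{1,q}$ function --- and conclude that $g \equiv 0$. Thus $N(T_a^{\ast}) = \{0\}$.

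I do not expect a serious obstacle here: the substance is entirely contained in Lemma \ref{lemma_kernel}, which is already proved, and the rest is the routine identification of the adjoint together with the elementary check $\tfrac1{p'} + \tfrac1q \le 1$ that simultaneously makes the term $\langle g a, u\rangle$ meaningful and places $g$ in the Lebesgue space required by that lemma. The only mild point to watch is the density reduction from $W^{1,p}_0(\Omega;\R^n)$ to $C_c^{\infty}(\Omega;\R^n)$, which is legitimate precisely because $T_a^{\ast} g \in W^{-1,p'}(\Omega;\R^n)$ is a continuous functional.
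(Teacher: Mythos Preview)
Your proof is correct and follows the same overall strategy as the paper: identify $g\in N(T_a^{\ast})$ with a function in $L^{p'}$ satisfying $\nabla g = g a$ in the sense of distributions, and then invoke Lemma~\ref{lemma_kernel}. The only difference lies in \emph{which exponent} is fed into Lemma~\ref{lemma_kernel}. The paper applies that lemma with $p$ in place of $q$: since $a\in L^{q}\subset L^{p}$ and $g\in L^{p'}$, one must check that $a$ is not the gradient of a $W^{1,p}$ function, which is deduced from the hypothesis via the Poincar\'e--Wirtinger inequality (if $a=\nabla A$ with $A\in W^{1,p}$, then $\nabla A\in L^{q}$ forces $A\in W^{1,q}$, a contradiction). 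You instead apply Lemma~\ref{lemma_kernel} with the original exponent $q$: from $p\le q$ you get $p'\ge q'$, hence $g\in L^{p'}\subset L^{q'}$ on the bounded set $\Omega$, and the non-gradient hypothesis on $a$ is used verbatim. Your route is marginally more economical, as it sidesteps the Poincar\'e--Wirtinger argument; the paper's route has the mild advantage of not relying on the boundedness of $\Omega$ for the Lebesgue inclusion. Either way the substance is the same.
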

\begin{proof}
Let \(g\in (L^{p})^*\) such that \(T_{a}^*(g)=0\). Identifying \(g\) with an element of \(L^{p'}\), \(p'=\frac{p}{p-1}\), this means that 
\[
\forall u \in W^{1,p}_0, \qquad \int_{\Omega}g(\diw u + \langle a , u \rangle)=0.
\]
Note that $a\in L^q\subset L^p$. Moreover,  \(a\) is not the gradient of a $W^{1,p}$ function for otherwise, there would exist \(A\in W^{1,p}\) such that \(a=\nabla A\). This would imply that \(\nabla A \in L^{q}\) and thus by the Poincar\'e-Wirtinger inequality, \(A\in W^{1,q}\), a contradiction to the assumption on \(a\).  
In view of Lemma \ref{lemma_kernel} applied with \(p\) instead of \(q\), we deduce that \(g=0\). This proves that \(N(T_{a}^*)=0\).
\end{proof}

By the Hahn-Banach theorem, see e.g. \cite[Corollary 1.8]{Brezis functional a}, this implies that the range \(R(T_a)\) of \(T_a\) is dense in \(L^{p}(\Omega)\). We shall see later that in fact \(R(T_a)=L^{p}(\Omega)\).

We now introduce two operators that will play a crucial role in the sequel.
\begin{definition}
\label{definition of S and K}
Let $\Omega\subset\re^n$ be a bounded open Lipschitz set. Let $S_0$ be the map given by Theorem \ref{th-Bogovski}.
We then define for \(f\in \bigcup_{1<p<\infty}L^{p}(\Omega)\): 
\begin{equation}\label{eq_def_S}
S(f) = S_0\left(f-\fint_{\Omega}f\right), 
\end{equation}
\begin{equation}\label{eq_def_K}
K(f)=- \fint_{\Omega}f +\langle a ;  S(f) \rangle.
\end{equation}
\end{definition}
For every \(1<p<\infty\), \(S\) defines a continuous linear map from \(L^{p}(\Omega)\) into \(W^{1,p}_0(\Omega;\re^n)\).

The map $K$ is continuous as well, but also  compact under an appropriate assumption on $a.$

\begin{lemma}\label{lemma_compacity_K}
Assume that \(a\in L^{q}(\Omega;\re^n)\) for some \(q>n\).
For every \(1<p\leq q\), the map \(K\) is a compact linear map from \(L^{p}(\Omega)\) into \(L^{p}(\Omega)\).
\end{lemma}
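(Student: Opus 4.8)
The map $K$ is already known to be continuous from $L^p$ into $L^p$ (it is the sum of a finite-rank operator $f\mapsto -\fint f$ and $f\mapsto \langle a, S(f)\rangle$, the latter continuous by Lemma~\ref{lemma:T continuous}'s multiplication arguments). Since finite-rank operators are compact, the crux is to show that $f\mapsto \langle a, S(f)\rangle$ is compact from $L^p(\Omega)$ into $L^p(\Omega)$ for $1<p\le q$. I would factor this as $L^p \xrightarrow{S} W^{1,p}_0 \xrightarrow{\iota} L^{s} \xrightarrow{\,\cdot\, a\,} L^p$, where $S$ is bounded by Theorem~\ref{th-Bogovski}, and the compactness is extracted from the \emph{middle} arrow via the Rellich--Kondrachov theorem: the embedding $W^{1,p}_0(\Omega) \hookrightarrow L^s(\Omega)$ is \emph{compact} for a suitable $s$ strictly below the Sobolev critical exponent. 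The last arrow, multiplication by $a\in L^q$, must be bounded from $L^s$ into $L^p$, which by H\"older requires $\tfrac1s + \tfrac1q \le \tfrac1p$, i.e. $\tfrac1s \le \tfrac1p - \tfrac1q$; since $q>n$ one can always choose such an $s$ that is also admissible (and subcritical) for the compact Rellich embedding. The composition of a compact operator with bounded operators on either side is compact, which gives the claim.

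**Choosing the exponent $s$.** I would split into the three regimes exactly as in Lemma~\ref{lemma:T continuous}. If $p<n$: the Sobolev exponent is $p^*=\frac{np}{n-p}$ and Rellich gives compact embedding $W^{1,p}_0\hookrightarrow L^s$ for every $s<p^*$; I need $\tfrac1s\le\tfrac1p-\tfrac1q$, i.e. $s\le\frac{pq}{q-p}$, and since $q>n>p$ one checks $\frac{pq}{q-p}<p^*$ fails in general — rather one must verify that the interval $\big(p, \frac{pq}{q-p}\big]$ meets $[1,p^*)$; because $q>n$ strictly, $\frac{pq}{q-p} < \frac{pn}{n-p}=p^*$ is false, so instead I simply take any $s\in\big(1,\min\{p^*,\tfrac{pq}{q-p}\}\big)$ — wait, more carefully: since $\tfrac1q<\tfrac1n$, we get $\tfrac1p-\tfrac1q>\tfrac1p-\tfrac1n=\tfrac1{p^*}$, so $\frac{pq}{q-p}<p^*$ indeed; thus pick $s=\frac{pq}{q-p}<p^*$, which is admissible for the compact Rellich embedding and makes multiplication by $a$ bounded $L^s\to L^p$. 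If $p=n$: $W^{1,n}_0$ embeds compactly into $L^s$ for every finite $s$, and $\tfrac1s\le\tfrac1n-\tfrac1q$ is satisfiable with finite $s$ since $q>n$. If $n<p\le q$: $W^{1,p}_0$ embeds compactly into $C^0(\Omegabar)\subset L^\infty$ (Morrey plus Arzel\`a--Ascoli on a bounded Lipschitz domain), and multiplication by $a\in L^q\subset L^p$ sends $L^\infty\to L^p$ boundedly.

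**Assembling the argument.** With $s$ fixed as above, write $K = K_1 + K_2$ where $K_1(f) = -\fint_\Omega f$ has one-dimensional range hence is compact, and $K_2(f) = \langle a, S(f)\rangle = M_a \circ \iota \circ S$, with $S\colon L^p\to W^{1,p}_0$ bounded (Theorem~\ref{th-Bogovski}), $\iota\colon W^{1,p}_0\hookrightarrow L^s$ compact (Rellich--Kondrachov / Morrey + Arzel\`a--Ascoli), and $M_a\colon L^s\to L^p$, $v\mapsto \langle a,v\rangle$, bounded by H\"older with the multiplication rule \eqref{eq:multipliciation in Lp}. Hence $K_2$ is compact as bounded $\circ$ compact $\circ$ bounded, and $K=K_1+K_2$ is compact. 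I would present this essentially as: "fix $s$ as below according to the three cases; then $K$ factors through a compact embedding, hence is compact." The only subtle point — and the one I would be most careful about — is verifying that the chosen $s$ simultaneously lies strictly below the critical Sobolev exponent (so Rellich gives \emph{compactness}, not just boundedness) and satisfies $\tfrac1s\le\tfrac1p-\tfrac1q$; this is precisely where the strict inequality $q>n$ (rather than $q=n$) is used, mirroring Remark~\ref{remark-185}'s observation that $q$ can be any exponent above $n$.
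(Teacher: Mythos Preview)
Your proof is correct and follows essentially the same route as the paper: define the exponent $s$ by $\tfrac{1}{s}+\tfrac{1}{q}=\tfrac{1}{p}$ (the paper calls it $r$), use $q>n$ to check $s<p^*$ so that the Rellich embedding $W^{1,p}_0\hookrightarrow L^s$ is compact, then factor $f\mapsto\langle a,S(f)\rangle$ as bounded $\circ$ compact $\circ$ bounded and add the rank-one averaging term. The only difference is presentational---the paper handles all three regimes for $p$ in one line by allowing $r=\infty$, whereas you spell them out separately.
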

\begin{proof}
 We define the exponent \(r\in (1,\infty]\) by \(\frac{1}{r}+\frac{1}{q}=\frac{1}{p}\). When \(p<n\), the fact that \(q>n\) implies that \(r< p^*=\frac{np}{n-p}\). It follows that the embedding \(W^{1,p}_0\subset L^{r}\) is compact and this remains true when \(p\geq n\).  
By the H\"older inequality,  the map 
\[
u\in L^r(\Omega;\R^n) \mapsto \langle a ; u \rangle \in L^p(\Omega) 
\]
is continuous. It thus follows  that the map
\[
u\in W^{1,p}_0(\Omega;\R^n) \mapsto \langle a ; u \rangle \in L^p(\Omega)
\]
is compact as the composition of a continuous operator with a compact one. Composing again by the continuous operator \(S\), we infer that the map \(f\in L^p \mapsto \langle a , S(f)\rangle \in L^p\) is compact.  The map $f\mapsto \fint_{\Omega} f$ is clearly compact from $L^p$ to $L^p.$ It follows that \(K\) is compact as well.
\end{proof}
\smallskip

We observe that  for every \(f\in \bigcup_{1<p\leq q}L^{p}(\Omega)\),
\begin{equation}
\label{eq:TS is I plus K}
\begin{split}
T_a\circ S (f) &= \diw S_0\left( f-\fint_{\Omega}f\right) +\langle a ;  S_0\left( f- \fint_{\Omega}f\right)\rangle\\
&=f-\fint_{\Omega}f +\langle a ;  S_0\left( f- \fint_{\Omega}f\right)\rangle=(I+K)(f).
\end{split}
\end{equation}

\begin{remark}\label{remark-498}
In view of Lemma \ref{lm-nta-t} and Lemma \ref{lemma_compacity_K}, one can apply  Lemma \ref{lemma_fa}  to \(\widetilde{S}=S\), where \(S\) is defined in  \eqref{eq_def_S}. We thus obtain that \(T_a:W^{1,p}_0(\Omega;\R^n) \to L^{p}(\Omega)\) has a right inverse, provided that \(a\) is not a gradient. However, we shall not use Lemma \ref{lemma_fa} directly, but slightly modify its original proof from \cite{Bourgain-Brezis} to obtain a \emph{universal} construction, first in the whole scale of Lebesgue spaces (this will imply Theorem \ref{th-main}), and next, in higher order Sobolev and H\"older spaces (to get Theorem \ref{th-universal-property}).
\end{remark}

The kernel of the operator \(I+K\) only contains \(L^q\) functions provided that \(a\in L^q\). More generally,

\begin{lemma}\label{lm_inva_p}
Let \(q>n\) and \(a\in L^{q}(\Omega;\re^n)\). Then for every \(1<p\leq q\) and every \(f\in L^{p}(\Omega)\), if \((I+K)(f)\in L^{q}(\Omega)\), then \(f\in L^{q}(\Omega)\).
\end{lemma}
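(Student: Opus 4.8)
The plan is to unwind the definition of $K$ and use a bootstrap argument powered by the Sobolev embedding and the universal regularity of the Bogovski operator $S_0$. Suppose $f\in L^p(\Omega)$ with $1<p\le q$ and that $h:=(I+K)(f)\in L^q(\Omega)$. Writing out \eqref{eq_def_K}, we have
\[
f = h - K(f) + f - (I+K)(f) \quad\text{is a tautology},
\]
so instead I rearrange: from $h=(I+K)(f)$ we get $f = h - K(f) = h + \fint_\Omega f - \langle a, S(f)\rangle$. Since $h\in L^q$ and the constant $\fint_\Omega f$ is in $L^q$, the membership of $f$ in a better space is governed entirely by the term $\langle a, S(f)\rangle$. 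The key point is that $a\in L^q$ and $S(f)\in W^{1,p}_0$, so by Hölder and Sobolev, $\langle a, S(f)\rangle$ lies in a space strictly better than $L^p$ (as long as $p<q$): precisely, if $p<n$ then $S(f)\in L^{p^*}$ with $p^*=\frac{np}{n-p}$, and $\frac1q+\frac1{p^*}<\frac1p$ since $q>n$, so $\langle a,S(f)\rangle\in L^{p_1}$ for some $p_1>p$; if $p\ge n$ one gets $\langle a, S(f)\rangle\in L^{p_1}$ for some $p_1>p$ directly from $S(f)\in\bigcap_{s<\infty}L^s$ (case $p=n$) or $S(f)\in L^\infty$ (case $p>n$), combined with $a\in L^q$.

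Thus the argument is an iteration: set $p_0=p$ and, as long as $p_k<q$, conclude from $f = h+\fint_\Omega f-\langle a,S(f)\rangle$ that $f\in L^{p_{k+1}}$ where $p_{k+1}=\min(q,\,p_k')$ with $p_k'$ the improved exponent produced by one application of Hölder–Sobolev to $\langle a, S(f)\rangle$ when $f\in L^{p_k}$. Here I use that $S$ maps $L^{p_k}$ continuously into $W^{1,p_k}_0$ (this is the universal property of $S_0$ from Theorem \ref{th-Bogovski}, valid for every exponent in $(1,\infty)$), so each improved integrability of $f$ legitimately feeds back into an improved integrability of $S(f)$ and hence of $\langle a, S(f)\rangle$. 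I need to check that the sequence $p_k$ reaches $q$ in finitely many steps. In the regime $p_k\ge n$ this is immediate since one step already lands in $L^q$ (indeed in any $L^s$, $s<\infty$, intersected against $a\in L^q$). In the regime $p_k<n$, one step gives $\frac1{p_{k+1}}=\frac1q+\frac{n-p_k}{np_k}=\frac1q+\frac1{p_k}-\frac1n$, i.e. $\frac1{p_k}-\frac1{p_{k+1}}=\frac1n-\frac1q>0$ is a fixed positive amount, so after finitely many steps we exit the regime $p_k<n$ (or reach $q$), and then one final step finishes. Hence $f\in L^q(\Omega)$.

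The only genuinely delicate point is bookkeeping at the borderline $p_k=n$ and making sure the gain $\frac1n-\frac1q$ is strictly positive and uniform, which it is because $q>n$ is a strict inequality; there is no real obstacle, just care with the three cases $p<n$, $p=n$, $p>n$ exactly as in the proofs of Lemma \ref{lemma:T continuous} and Lemma \ref{lemma_compacity_K}. I would present it as: first record the identity $f = (I+K)(f) + \fint_\Omega f - \langle a, S(f)\rangle$; then state the one-step improvement lemma ``if $f\in L^{p'}$ with $p'\le q$ then $\langle a,S(f)\rangle\in L^{\min(q,\sigma(p'))}$ with $\sigma(p')>p'$''; then iterate finitely many times.
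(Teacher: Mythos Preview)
Your proposal is correct and follows essentially the same bootstrap as the paper: rewrite $f=(I+K)(f)+\fint_\Omega f-\langle a,S(f)\rangle$, use that $S:L^{p_k}\to W^{1,p_k}_0$ and the Sobolev/Morrey embeddings to gain a fixed amount $\frac1n-\frac1q>0$ in $\frac1{p_k}$ at each step while $p_k<n$, and then finish once $p_k\ge n$. The only cosmetic slip is that at the borderline $p_k=n$ one step gives $f\in L^r$ for some $r\in(n,q)$ rather than $f\in L^q$ directly, so two steps are needed there---exactly as the paper does, and as you yourself flag in your closing remark.
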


\begin{proof}
Let \(f\in L^{p}\) such that
\[
\widetilde{f} +\langle a , S_0(\widetilde{f})\rangle \in L^{q}(\Omega), \qquad \widetilde{f}=f-\fint_{\Omega}f.
\]
We now distinguish three cases:
\smallskip

\textit{Case 1.} 
Assume first that \(p>n\). Since \(S_0\) maps \(L^{p}_\sharp\) into \(W^{1,p}\), it follows from the Morrey embedding that  \(S_0(\widetilde{f})\in L^{\infty}\). Since \(a\in L^{q}(\Omega)\), we  have \(\langle a , S_0(\widetilde{f})\rangle \in L^{q}(\Omega)\), which  completes the proof in that case.
\smallskip

\textit{Case 2.}
When \(p=n\), we use that \(S_0(\widetilde{f})\in W^{1,n}\). The latter space being contained in \( \bigcap_{1\leq r <\infty} L^{r}\), we deduce that
\[
\langle a , S_0(\widetilde{f})\rangle \in \bigcap_{1\leq r <q}L^r.
\]
Since $q>n,$ we can choose some $r\in (n,q)$ such that $\langle a , S_0(\widetilde{f})\rangle \in L^r$. Hence $\widetilde{f}\in L^r$ and we are thus reduced to the first case.

\smallskip

\textit{Case 3.}
Finally, if \(p<n\), we rely on the fact that \(S_0\) maps \(L^{r}_\sharp\) into \(W^{1,r}\) for every \(1<r<\infty\) and the Sobolev embedding \(W^{1,r}\subset L^{r^*}\), with \(\frac{1}{r^*}=\frac{1}{r}-\frac{1}{n}\).  For \(r=p\), this first implies that \(S_0(\widetilde{f})\in L^{p^*}\). Since \(a\in L^{q}\) with \(q>n\), if follows that \(\langle a , S_0(\widetilde{f})\rangle \in L^{p_1}\) with
\[
\frac{1}{p_1}=\frac{1}{p^*} + \frac{1}{q} = \frac{1}{p} + \frac{1}{q}-\frac{1}{n}.
\]
Since \(p_1<q\), one also has 
\begin{equation}\label{eq540}
\widetilde{f}\in L^{p_1}.
\end{equation}
Since \(\frac{1}{q}-\frac{1}{n} <0\), one gets that \(p_1>p\). If \(p_1<n\), one can repeat this argument with \(r=p_1\). This proves that \(\widetilde{f}\in L^{p_2}\) with
\[
\frac{1}{p_2}= \frac{1}{p_1} + \frac{1}{q}-\frac{1}{n}=\frac{1}{p} + 2\left(\frac{1}{q}-\frac{1}{n}\right).
\]
Let us define the 
sequence \((p_k)_{k\in \N}\) as 
$$
  \frac{1}{p_k}=\frac{1}{q}-\frac{1}{n}+\frac{1}{p_{k-1}}=\frac{1}{p}+k\left(\frac{1}{q}-\frac{1}{n}\right).
$$
We observe that if \(1<p_{k-1}<n\), then \(\frac{1}{p_k}>\frac{1}{q}\) and thus \(p_k>0\). Since \(\frac{1}{p_k}<\frac{1}{p_{k-1}}\), it follows that \(p_k>1\). Moreover, the sequence \((\frac{1}{p_k})_{k\in \N}\) tends to \(-\infty\). Hence, there exists \(k_0\in \N\) such that \(1<p_{k}<n\) for every \(k\leq k_0\) and \(p_{k_{0}+1}\geq n\). 
Bootstrapping the  argument leading to \eqref{eq540}, we deduce that \(\widetilde{f}\)  belongs to \(L^{p_{k_0+1}}\). The two first parts of the proof then apply to yield \(\widetilde{f}\in L^{q}\). 
\end{proof}

\smallskip

According to Remark \ref{remark-498}, we need to check that the construction described in the  original proof of Lemma \ref{lemma_fa}, see \cite{Bourgain-Brezis},  can be slightly adapted to yield a \emph{universal} right inverse to \(T_a\). A first tool is provided by the following:

\begin{lemma}
\label{lemma:decomp of Lp in Xp plus N}
Let $q>n$ and suppose that $a\in L^q(\Omega;\re^n).$ 
\smallskip 

(i) Then there exists a closed subspace $X=X_q$ of $L^1(\Omega)$ (which is independent of $p$) such that for every $1<p\leq q$
$$
  L^p(\Omega)=(X\cap L^p(\Omega))\oplus N,
$$
where $N=\{f\in L^q(\Omega):\, (I+K)(f)=0\}.$ 
\smallskip

(ii) If in  addition $a\in L^r(\Omega;\re^n)$ for some $r\geq q,$ then \(N\subset L^{r}(\Omega)\) and for the same $X=X_q$ of (i), one has 
$$
  L^r(\Omega)=(X\cap L^r(\Omega))\oplus N.
$$
\end{lemma}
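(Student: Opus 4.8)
The plan is to first establish part (i) and then bootstrap it to part (ii). For part (i), the key observation is that $N = \{f\in L^q(\Omega):\ (I+K)(f)=0\}$ is a finite-dimensional subspace: indeed $I+K$ is a Fredholm operator on $L^q(\Omega)$ by Lemma \ref{lemma_compacity_K} (since $K$ is compact on $L^q$), so $N = \ker(I+K)$ is finite-dimensional. A finite-dimensional subspace of a Banach space always admits a closed complement, so there exists a closed subspace $Y$ of $L^q(\Omega)$ with $L^q(\Omega) = Y\oplus N$. However, I need a complement that works \emph{simultaneously} for all $p\in(1,q]$, and the naive choice of $Y$ need not be the trace of a subspace of $L^1$. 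The trick — standard when one has a finite-dimensional piece to split off — is to build $X$ from linear functionals: since $N$ is finite-dimensional, pick a basis $e_1,\dots,e_m$ of $N$; each $e_j$ lies in $L^q\subset L^{p'}$-dual pairings make sense, but more robustly, since $N\subset L^q(\Omega)$ and the $e_j$ are in particular bounded functions times... — actually the cleanest route is to choose functionals $\ell_1,\dots,\ell_m$ on $L^1(\Omega)$, i.e. elements of $L^\infty(\Omega)$, such that the matrix $(\ell_i(e_j))$ is invertible; this is possible because the $e_j$ are linearly independent in $L^1(\Omega)$ and $L^\infty$ separates points of $L^1$. Then set
\[
X = X_q = \Bigl\{f\in L^1(\Omega):\ \ell_i(f)=\int_\Omega \ell_i\, f = 0 \text{ for } i=1,\dots,m\Bigr\}.
\]
This $X$ is a closed subspace of $L^1(\Omega)$, independent of $p$, and for any $1<p\le q$ one checks $L^p(\Omega) = (X\cap L^p(\Omega))\oplus N$: given $f\in L^p$, the unique candidate decomposition is $f = (f - \sum_j c_j e_j) + \sum_j c_j e_j$ where the $c_j$ are determined by solving the linear system $\sum_j c_j \ell_i(e_j) = \ell_i(f)$, which has a solution since the matrix is invertible; the first summand lies in $X\cap L^p$ because each $e_j\in L^q\subseteq L^p$, and the sum is direct because $X\cap N = \{0\}$ (an element of $N$ annihilated by all $\ell_i$ must be zero, by invertibility of the matrix).

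For part (ii), suppose additionally $a\in L^r(\Omega;\mathbb{R}^n)$ with $r\ge q$. First I must check $N\subset L^r(\Omega)$: if $f\in N$ then $(I+K)(f)=0\in L^r(\Omega)$, and since $r\ge q>n$, Lemma \ref{lm_inva_p} (applied with the exponent $r$ in place of $q$, using $a\in L^r$) — or rather a direct reading of its proof, which shows $(I+K)(f)\in L^r \Rightarrow f\in L^r$ whenever $a\in L^r$ and $r>n$ — gives $f\in L^r(\Omega)$. Hence $N\subset L^r(\Omega)$ and $N$ is still finite-dimensional, with the \emph{same} basis $e_1,\dots,e_m$; note that the matrix $(\ell_i(e_j))$ and the functionals $\ell_i\in L^\infty(\Omega)$ restrict to $L^r(\Omega)$ without change, since $L^r\subset L^1$. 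The same argument as in part (i), now carried out for $f\in L^r(\Omega)$, yields $L^r(\Omega) = (X\cap L^r(\Omega))\oplus N$ with the identical space $X=X_q$.

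The main obstacle I anticipate is not any single hard estimate but rather being careful that the complement $X$ is genuinely $p$-independent and sits inside $L^1$: one must resist the temptation to define $X$ as an $L^p$-orthogonal-type complement, which would depend on $p$; the functional/annihilator construction via $L^\infty$ test functions is what makes it universal. A secondary point requiring care is the verification that $N = \ker(I+K)$ is the same finite-dimensional space regardless of which $L^p$ we view $I+K$ acting on — this is exactly the content (and the reason for the hypothesis) of Lemma \ref{lm_inva_p}, which guarantees that solutions of $(I+K)f=0$ in any $L^p$, $1<p\le q$, automatically lie in $L^q$; combined with Lemma \ref{lemma_compacity_K} it pins down $N$ unambiguously and makes the splitting simultaneous across the whole scale. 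For part (ii) the analogous upgrade is supplied by re-running Lemma \ref{lm_inva_p}'s bootstrap with $r$ in the role of $q$.
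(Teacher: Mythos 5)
Your proof is correct and follows essentially the same route as the paper: both hinge on the observation that $N$ is a finite-dimensional subspace of $L^1(\Omega)$ and hence admits a closed complement $X$ there, after which $L^p=(X\cap L^p)\oplus N$ follows for each $p\in(1,q]$ because $N\subset L^q\subset L^p$, and part (ii) follows by rerunning Lemma~\ref{lm_inva_p} with $r$ in place of $q$. The paper simply invokes the abstract existence of such a complement, whereas you make it explicit via annihilator functionals $\ell_1,\dots,\ell_m\in L^\infty=(L^1)^*$; this is the standard proof of that abstract fact, so the content is the same.
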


\begin{proof}
(i) By Lemma \ref{lemma_compacity_K}, the map \(K\) is  a compact operator from \(L^{q}\) into \(L^{q}\). This implies that
\(N\) is a finite dimensional subspace of \(L^{q}\), and thus, a finite dimensional subspace of \(L^{1}\). Hence, there exists a closed subspace \(X\) of \(L^{1}\) such that
\[
L^{1}=X \oplus N.
\] 
For every \(1<p\leq q\), the fact that \(N\subset L^p\) implies that
\[
L^{p}=(X\cap L^p) \oplus N.
\]
Note that $X\cap L^p$ is a closed subspace of $L^p.$
\smallskip

(ii) First observe that if $a\in L^r,$ then by Lemma \ref{lm_inva_p} with \(r\) instead of \(q\), one has \(N\subset L^{r}\). We can thus conclude as in (i) that \(L^{r}=(X\cap L^r) \oplus N\).
\end{proof}

\begin{lemma}
\label{lemma:decomp of Lp in range plus Z}
Let $q>n$ and suppose that $a\in L^q(\Omega;\re^n)$ is  not the gradient of a $W^{1,q}$ function. Then there exists a finite dimensional space $Z=Z_q\subset T_a(C_c^{\infty}(\Omega;\re^n))$ such that for all $1<p\leq q$
\begin{equation}
 \label{eq369}
  (I+K)(L^p(\Omega))\oplus Z=L^p(\Omega).
\end{equation}
\end{lemma}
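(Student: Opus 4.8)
The plan is to exploit the Fredholm structure of $I+K$ together with the density result for $R(T_a)$. Since $K$ is a compact operator on $L^p$ for every $1<p\le q$ (Lemma \ref{lemma_compacity_K}), the operator $I+K$ is Fredholm of index $0$ on each such $L^p$; in particular its range $(I+K)(L^p(\Omega))$ is a closed subspace of $L^p(\Omega)$ of finite codimension equal to $\dim N$, where $N=\{f\in L^q:(I+K)(f)=0\}$ as in Lemma \ref{lemma:decomp of Lp in Xp plus N}. Let me write $d=\dim N$. The key point is that this codimension $d$ is the same for all $p\in(1,q]$: indeed, by Lemma \ref{lm_inva_p} the kernel of $I+K$ in $L^p$ coincides with $N$ for every such $p$ (if $f\in L^p$ and $(I+K)(f)=0\in L^q$, then $f\in L^q$), so the nullity is $p$-independent, hence by index $0$ the codimension of the range is the $p$-independent number $d$.

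Next I would produce the complement $Z$ inside $T_a(C_c^\infty)$. First, from \eqref{eq:TS is I plus K} we have $T_a\circ S=I+K$, so $(I+K)(L^q(\Omega))=T_a(S(L^q(\Omega)))\subset T_a(W^{1,q}_0(\Omega;\R^n))$. By Lemma \ref{lm-nta-t}, $N(T_a^*)=\{0\}$ on $L^q$, so by Hahn--Banach (as remarked after that lemma) $R(T_a)$ is dense in $L^q(\Omega)$. Since $T_a$ is continuous on $W^{1,q}_0$ and $C_c^\infty(\Omega;\R^n)$ is dense in $W^{1,q}_0$, the subspace $T_a(C_c^\infty(\Omega;\R^n))$ is dense in $R(T_a)$, hence dense in $L^q(\Omega)$. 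Now $(I+K)(L^q(\Omega))$ is a closed subspace of $L^q$ of finite codimension $d$; since $T_a(C_c^\infty(\Omega;\R^n))$ is dense, it cannot be contained in the proper closed subspace $(I+K)(L^q(\Omega))$ (unless $d=0$, in which case take $Z=\{0\}$), and a standard finite-dimensional argument lets me pick $v_1,\dots,v_d\in T_a(C_c^\infty(\Omega;\R^n))$ whose images in the $d$-dimensional quotient $L^q/(I+K)(L^q)$ form a basis. Setting $Z=Z_q=\mathrm{span}\{v_1,\dots,v_d\}$, we get $Z\subset T_a(C_c^\infty(\Omega;\R^n))$ and $(I+K)(L^q(\Omega))\oplus Z=L^q(\Omega)$.

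It remains to upgrade this to every $p\in(1,q]$. Here I use that each $v_j$, being in $T_a(C_c^\infty)$, lies in $L^p(\Omega)$ for all $p$, so $Z\subset L^p(\Omega)$, and that $(I+K)(L^p(\Omega))$ is closed of codimension $d$ in $L^p(\Omega)$. I must check that $Z$ still projects to a basis of $L^p/(I+K)(L^p)$, equivalently that $Z\cap(I+K)(L^p(\Omega))=\{0\}$ and $\dim Z=d$: the dimension count is automatic, and if some nontrivial combination $\sum c_j v_j\in(I+K)(L^p(\Omega))$, then since $\sum c_j v_j\in L^q$ Lemma \ref{lm_inva_p} gives a preimage in $L^q$, i.e. $\sum c_j v_j\in(I+K)(L^q(\Omega))$, contradicting the $L^q$-directness. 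Hence $Z+(I+K)(L^p(\Omega))$ is a $d$-codimensional-plus-$d$-dimensional direct sum inside $L^p(\Omega)$, which forces \eqref{eq369}.

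The main obstacle I anticipate is the bookkeeping needed to make $Z$ genuinely $p$-independent — the dimension $d$ and the kernel $N$ must be shown the same across the whole scale, which is exactly what Lemma \ref{lm_inva_p} is for — together with the care required to invoke that lemma in the direction ``$(I+K)f\in L^q\Rightarrow$ preimage lifts to $L^q$''; everything else (closed range and codimension of a compact perturbation of the identity, density of $T_a(C_c^\infty)$, choosing a basis of a finite-dimensional quotient from a dense set) is standard functional analysis.
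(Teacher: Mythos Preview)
Your proof is correct and follows essentially the same route as the paper: establish density of $T_a(C_c^\infty)$ in $L^q$, build $Z$ as a complement of $(I+K)(L^q)$ inside this dense set, then use Lemma~\ref{lm_inva_p} to show $Z\cap (I+K)(L^p)=\{0\}$ for every $p\le q$. The only difference is in the last step: you invoke the Fredholm index-$0$ property to match $\dim Z$ with the codimension of $(I+K)(L^p)$, whereas the paper instead observes that $(I+K)(L^p)\oplus Z$ is closed and contains the dense subset $L^q=(I+K)(L^q)\oplus Z$, hence equals $L^p$; both arguments are standard and equally valid.
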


\begin{proof}
By Lemma \ref{lm-nta-t}, \(\{f\in L^{q'} : T_{a}^*(f)=0\}=\{0\}\), which implies that \(T_{a}(W^{1,q}_0)\) is dense in \(L^{q}\) by the Hahn-Banach theorem. Since \(C^{\infty}_c\) is dense in \(W^{1,q}_0\), it follows that \(T_{a}(C^{\infty}_c)\) is dense in \(L^q\).
Moreover, \((I+K)(L^q)\) has finite codimension in \(L^{q}\), because $K:L^q\to L^q$ is compact, see \cite[Theorem 6.6]{Brezis functional a}. 
In view of Lemma \ref{lemma_complementing_density} in the Appendix,  there exists a finite dimensional space \(Z\subset T_{a}(C^{\infty}_c)\) such that \(Z\oplus (I+K)(L^{q})=L^{q}\). We claim that $Z$ has the desired property.

We  first  prove that $(I+K)(L^p)\cap Z=\{0\}.$ Indeed, let \(f\in L^{p}\) such that \((I+K)(f)\in Z\). Since \(Z\subset L^q\), Lemma \ref{lm_inva_p} implies that \(f\in L^{q}\). Hence, \((I+K)(f)\in Z\cap (I+K)(L^q)=\{0\}\).  This proves that $(I+K)(L^p)\cap Z=\{0\}.$ 

Since $K:L^p\to L^p$ is compact, $(I+K)(L^p)$ is closed in $L^p,$ see \cite[Theorem 6.6]{Brezis functional a}. Using that \(Z\) is finite dimensional, it follows that \((I+K)(L^p)\oplus Z\) is closed in \(L^p\). Since \((I+K)(L^p)\oplus Z\) contains \((I+K)(L^q)\oplus Z = L^q\) which is dense in \(L^{p}\), we deduce that \(Z\oplus (I+K)(L^p)=L^{p}\), as desired.
\end{proof}

\smallskip

Having prepared all the necessary ingredients,  we  can now complete the

\begin{proof}[Proof of Theorem \ref{th-main}]

\textit{Step 1.} 
Fix \(p\in (1,q]\). By Lemma \ref{lm_inva_p}, \(\{f\in L^{p} : (I+K)(f)=0\}=\{f\in L^{q} : (I+K)(f)=0\}\). By construction of \(X\) and Lemma \ref{lemma:decomp of Lp in Xp plus N} (i), the map \(I+K\) defines an isomorphism from \(X\cap L^p\) onto \((I+K)(L^p)\) which are two Banach spaces, as two closed subsets of Banach spaces. We denote by \(V_p : (I+K)(L^p) \to X\cap L^p\) its inverse (continuous by the inverse mapping theorem) and we have the following diagram
$$
\begin{array}{cccccc}
  L^p & = &        X\cap L^p           & \oplus & N &\phantom{aaaaaaa}\text{ where }N=N(I+K)\\ 
      &   & \;\vert \quad\quad\; \uparrow  &        &     &     \\
      &   & I+K \quad V_p         &        &          & \phantom{aaaaaa} V_p=\left((I+K)\big|_{X\cap L^p}\right)^{-1}      \\
      &   & \,\downarrow \quad\quad\; \vert &         &    &      \\
  L^p & = &  (I+K)(L^p)           &   \oplus& Z .      &
\end{array}
$$
We claim that for every \(1<p_1\leq p_2 \leq q\) 
$$
  V_{p_1}(f)=V_{p_2}(f) \quad\text{ for every }f\in (I+K)(L^{p_2}).
$$
In fact, for every \(f\in (I+K)(L^{p_2})\), let \(g\) be the unique element in \(X\cap L^{p_2}\) such that \((I+K)(g)=f\). Then \(V_{p_2}(f)=g\). Moreover,  \(g\in X\cap L^{p_1}\) so that \(V_{p_1}(f)=g\).

In view of Lemma \ref{lemma:decomp of Lp in range plus Z},  for every \(1<p\leq q\), \(L^{p}= (I+K)(L^p)\oplus Z\). Hence,  there exist two continuous projections 
\[
Q_p :  L^{p} \mapsto (I+K)(L^p),
\]
\[
\zeta_p : L^{p} \mapsto Z,
\]
such that for every \(f\in L^{p}\), \(f= Q_p(f) + \zeta_p(f)\).

Let \((e_\alpha)_{\alpha}\) be a finite basis of \(Z\). Since \(Z\subset T(C^{\infty}_c)\), there exists \((\bar{e}_\alpha)\subset C^{\infty}_c\) such that \(T(\bar{e}_\alpha)=e_\alpha\) for every \(\alpha\). Let \((e_{\alpha}^*)_{\alpha}\subset Z^*\) be the dual basis of \((e_\alpha)_\alpha\), which has the property
$$
  \sum_{\alpha}e_{\alpha}^*(g)e_{\alpha}=g\quad\text{ for all }g\in Z.
$$
%Finally, we define
%\[
%\varepsilon_{\alpha}^p:=e_{\alpha}^* \circ \zeta_p.
%\]

At last we define for every \(1<p\leq q\), for every \(f\in L^p\),
\[
S_{a}^p(f) = S\circ V_p \circ Q_p (f)+ \sum_{\alpha} e_{\alpha}^* \circ \zeta_p(f)\bar{e}_\alpha.
\]
Remember that \(S\) is given by \eqref{eq_def_S}.

Then, by composition, \(S_{a}^p\) is a linear continuous map from \(L^{p}\) into \(W^{1,p}_0\). Moreover, by \eqref{eq:TS is I plus K},  \(T_a\circ S = I+K\), and thus
\begin{align*}
T_a\circ S_{a}^p(f) &= T_a\circ \left(S\circ V_p \circ Q_p(f) + \sum_{\alpha} e_{\alpha}^* \circ \zeta_p(f)\bar{e}_\alpha\right)\\
&=T_a\circ S\circ V_p \circ Q_p(f) + \sum_{\alpha} e_{\alpha}^*\circ \zeta_p(f)T(\bar{e}_\alpha)\\
&=(I+K) \circ V_p \circ Q_p(f) + \sum_{\alpha} e_{\alpha}^*\circ \zeta_p(f) e_\alpha\\
&=Q_p(f) + \zeta_p(f)=f.
\end{align*}

\textit{Step 2.} The above construction is universal: if \(1<p_1\leq p_2\leq q\), then for every \(f\in L^{p_2}\), we claim that \(S_{a}^{p_1}(f)=S_{a}^{p_2}(f)\). Indeed, one can write \(f= Q_{p_2}(f)+\zeta_{p_2}(f)\) according to the decomposition \(L^{p_2}=(I+K)(L^{p_2}) \oplus Z\). Since \((I+K)(L^{p_2})\subset (I+K)(L^{p_1})\), this is also the decomposition of \(f\) in \(L^{p_1}=(I+K)(L^{p_1}) \oplus Z\):
\[
Q_{p_2}(f)=Q_{p_1}(f) \qquad \textrm{ and } \qquad \zeta_{p_2}(f)=\zeta_{p_1}(f).
\]
We have already checked that \(V_{p_1}\) and \(V_{p_2}\) agree on \((I+K)(L^{p_2})\). Hence,
\begin{align*}
S_{a}^{p_1}(f) &= S\circ V_{p_1} \circ Q_{p_1}(f) + \sum_{\alpha} e_{\alpha}^{*}\circ \zeta_{p_1}(f)\bar{e}_{\alpha}\\
&=S\circ V_{p_2} \circ Q_{p_2}(f) + \sum_{\alpha} e_{\alpha}^{*}\circ \zeta_{p_2}(f)\bar{e}_\alpha\\
&=S_{a}^{p_2}(f).
\end{align*}
It follows that we can define the map \(S_a : \bigcup_{1<p\leq q}L^{p} \to \bigcup_{1<p\leq q}W^{1,p}_0\) by setting \(S_{a}|_{L^{p}}=S_{a}^p\).
\end{proof}
\smallskip

We now prove that the above construction of \(S_{a}\) does not depend on \(q\), see Remark \ref{remark-185}:

\begin{lemma}
\label{lemma:q replace by r}
If \(a\in L^{r}(\Omega)\) for some \(r\geq q\), then \(S_a\) maps continuously \(L^{r}(\Omega)\) into \(W^{1,r}_{0}(\Omega)\).
\end{lemma}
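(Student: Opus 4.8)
\textbf{Proof plan for Lemma \ref{lemma:q replace by r}.}

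The idea is that the entire construction of $S_a$ given in the proof of Theorem \ref{th-main} is ``stable'' under enlarging the integrability exponent, thanks to the extra information provided in part (ii) of Lemma \ref{lemma:decomp of Lp in Xp plus N}, in Lemma \ref{lm_inva_p}, and in Lemma \ref{lemma:decomp of Lp in range plus Z}. So the first step is to recall that, by Lemma \ref{lemma:decomp of Lp in range plus Z} applied with $r$ in place of $q$ (legitimate since $a\in L^r$ and $a$ is still not the gradient of a $W^{1,r}$ function, by the Poincar\'e--Wirtinger argument already used in Lemma \ref{lm-nta-t}), the same finite dimensional space $Z=Z_q\subset T_a(C_c^\infty(\Omega;\re^n))$ satisfies $(I+K)(L^r(\Omega))\oplus Z=L^r(\Omega)$. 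Here I would check carefully that $Z_q$ built inside $L^q$ indeed still works for $L^r$: this is exactly what the proof of Lemma \ref{lemma:decomp of Lp in range plus Z} gives once one observes $Z\subset L^q\subset L^r$ and $(I+K)(L^r)$ is closed of finite codimension, with $(I+K)(L^q)\oplus Z=L^q$ dense in $L^r$. Likewise, by Lemma \ref{lemma:decomp of Lp in Xp plus N}(ii), the same closed subspace $X=X_q$ of $L^1(\Omega)$ satisfies $L^r(\Omega)=(X\cap L^r(\Omega))\oplus N$, with $N\subset L^r(\Omega)$.

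The second step is to run the construction of Step 1 of the proof of Theorem \ref{th-main} verbatim with $r$ replacing $p$: one gets the continuous projections $Q_r:L^r\to (I+K)(L^r)$ and $\zeta_r:L^r\to Z$, the isomorphism $I+K:X\cap L^r\to (I+K)(L^r)$ with continuous inverse $V_r$, and, using the \emph{same} basis $(e_\alpha)_\alpha$ of $Z$, the \emph{same} preimages $(\bar e_\alpha)\subset C_c^\infty$ (which do not depend on the exponent) and the \emph{same} dual basis $(e_\alpha^*)_\alpha\subset Z^*$, one defines
\[
S_a^r(f)=S\circ V_r\circ Q_r(f)+\sum_\alpha e_\alpha^*\circ\zeta_r(f)\,\bar e_\alpha,\qquad f\in L^r(\Omega).
\]
Since $S$ maps $L^r$ continuously into $W^{1,r}_0$ (Definition \ref{definition of S and K}, using the continuity of $S_0$ on $L^r_\sharp$ from Theorem \ref{th-Bogovski}), $S_a^r$ is a continuous linear map from $L^r(\Omega)$ into $W^{1,r}_0(\Omega;\re^n)$, and the same computation as in Step 1 gives $T_a\circ S_a^r=\mathrm{id}_{L^r}$.

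The third and final step is the compatibility: one must show $S_a^r|_{L^q}=S_a^{q}=S_a|_{L^q}$, so that $S_a^r$ is genuinely an extension of the already-constructed $S_a$ and hence $S_a$ itself maps $L^r$ continuously into $W^{1,r}_0$. This is proved exactly as Step 2 of the proof of Theorem \ref{th-main}: for $f\in L^q$, the decomposition $f=Q_q(f)+\zeta_q(f)$ along $L^q=(I+K)(L^q)\oplus Z$ is also the decomposition of $f$ along $L^r=(I+K)(L^r)\oplus Z$ because $(I+K)(L^q)\subset (I+K)(L^r)$ and $Z$ is common; hence $Q_r(f)=Q_q(f)$ and $\zeta_r(f)=\zeta_q(f)$. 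Moreover $V_r$ and $V_q$ agree on $(I+K)(L^q)$: if $g\in X\cap L^q$ is the unique element with $(I+K)(g)=f$, then $g\in X\cap L^r$ too, so $V_r(f)=g=V_q(f)$. Plugging these equalities into the formula for $S_a^r(f)$ and $S_a^q(f)$ and using that $S$, the $\bar e_\alpha$ and the $e_\alpha^*$ are unchanged gives $S_a^r(f)=S_a^q(f)$. The only point requiring genuine care — the main (minor) obstacle — is the first step: namely verifying that the \emph{particular} subspaces $X=X_q$ and $Z=Z_q$ chosen during the proof of Theorem \ref{th-main} are precisely the ones whose good behaviour under enlargement of the exponent is guaranteed by Lemmas \ref{lemma:decomp of Lp in Xp plus N}(ii) and \ref{lemma:decomp of Lp in range plus Z}; once this bookkeeping is in place, everything else is a rerun of the arguments already carried out for Theorem \ref{th-main}.
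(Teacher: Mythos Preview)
Your overall strategy --- rerun the construction at level $r$ and check compatibility --- is reasonable in spirit, but the execution contains a systematic error: on a bounded domain, $r\geq q$ means $L^{r}(\Omega)\subset L^{q}(\Omega)$, not the reverse. Several of your key steps rest on the wrong inclusion. In particular, ``$Z\subset L^q\subset L^r$'' is false (though $Z\subset L^r$ does hold, via $Z\subset T_a(C^\infty_c)\subset L^r$ since $a\in L^r$); the density argument ``$(I+K)(L^q)\oplus Z=L^q$ dense in $L^r$'' is meaningless here, since $L^q$ \emph{contains} $L^r$; and in Step~3 the compatibility to prove is $S_a^{q}|_{L^r}=S_a^{r}$, not $S_a^{r}|_{L^q}=S_a^{q}$, with the inclusion $(I+K)(L^r)\subset (I+K)(L^q)$ going the opposite way from what you wrote. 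The density argument from the proof of Lemma~\ref{lemma:decomp of Lp in range plus Z} genuinely does not transfer to $r>q$; to get $(I+K)(L^r)\oplus Z=L^r$ with the \emph{same} $Z=Z_q$ you would instead need a dimension count: by Lemma~\ref{lm_inva_p} (applied with $r$ in place of $q$) the kernel of $I+K$ on $L^r$ coincides with $N$, hence by the Fredholm alternative the codimension of $(I+K)(L^r)$ in $L^r$ equals $\dim N=\dim Z$, and since $(I+K)(L^r)\cap Z=\{0\}$ the direct sum fills $L^r$.

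The paper takes a different and shorter route that sidesteps all of this bookkeeping. It proves the abstract Lemma~\ref{lm_univ_prop}: given Banach spaces $E\hookrightarrow W^{1,p}_0$ and $F\hookrightarrow L^p$ such that $C^\infty_c\subset E$, $T_a(C^\infty_c)\subset F$, $(I+K)^{-1}(F)\subset F$, and $S(F)\subset E$ continuously, one concludes $S_a(F)\subset E$ continuously (via the closed graph theorem plus a direct check that each piece of the formula for $S_a(f)$ lands in $E$ when $f\in F$). Lemma~\ref{lemma:q replace by r} is then the one-line specialisation $p=q$, $E=W^{1,r}_0$, $F=L^r$, with hypothesis~(3) supplied by Lemma~\ref{lm_inva_p}. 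This avoids having to reprove that $X_q$ and $Z_q$ behave well at level $r$: one never rebuilds $S_a^r$, one just shows the already-constructed $S_a$ has the extra mapping property.
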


\begin{remark}
\label{remark:unicity of Sa}
Given some $a\in L^r(\Omega;\re^n),$ the map $S_a$ is uniquely determined by the choice of $S_0,$ by the choice of some $q\leq r$ and $X=X_q$ (in Lemma \ref{lemma:decomp of Lp in Xp plus N}), and finally by the choice of the $\overline{e}_{\alpha}$ (which in turn depends on the choice of $Z=Z_q$).
\end{remark}

Lemma \ref{lemma:q replace by r} is an easy consequence of the following more general result which will be needed in the next section: 

\begin{lemma}\label{lm_univ_prop} Let $\Omega\subset\re^n$ be a bounded open Lipschitz set.
Let $q>n$ and $a\in L^q(\Omega;\re^n)$. Let  $1<p\leq q$ and  \(E\), \(F\)  be two Banach spaces continuously embedded in \(W^{1,p}_0(\Omega;\R^n)\) and \(L^{p}(\Omega)\) respectively. We assume that 
\begin{enumerate}
\item The set \(E\) contains \(C^{\infty}_c(\Omega;\R^n)\).
\item For every \(\varphi\in C^{\infty}_c(\Omega;\R^n)\), \(\diw \varphi+\langle a, \varphi \rangle\) belongs to \(F\).
\item For every \((u,f)\in  L^{p}(\Omega)\times F\), 
\begin{equation}\label{eq_univ_prop}
(I+K)(u)=f \quad\Rightarrow\quad u\in F.
\end{equation} 
\item The map \(S\) defined in \eqref{eq_def_S} maps continuously \(F\) into \(E\).
\end{enumerate} 
Then the map \(S_a\) constructed in Theorem \ref{th-main} maps continuously \(F\) into \(E\).
\end{lemma}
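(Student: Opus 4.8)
The plan is to re-run the construction of $S_a$ from the proof of Theorem \ref{th-main}, but now tracking everything at the level of the Banach spaces $E$ and $F$ rather than $W^{1,p}_0$ and $L^p$. The key structural fact to establish first is that the finite-dimensional spaces $N$, $Z$ and $X\cap L^p$ interact well with $F$. Precisely: since $N = N(I+K) \subset L^q$ and every element of $N$ satisfies $(I+K)(u) = 0 \in F$ (note $0 \in F$), hypothesis (3) forces $N \subset F$; and $Z \subset T_a(C^\infty_c) \subset F$ by hypotheses (1)--(2) together with the fact that $T_a(\varphi) = \diw\varphi + \langle a,\varphi\rangle$. Consequently the direct sum decompositions $L^p = (X\cap L^p)\oplus N$ and $L^p = (I+K)(L^p)\oplus Z$ from Lemmas \ref{lemma:decomp of Lp in Xp plus N} and \ref{lemma:decomp of Lp in range plus Z} restrict to $F$: one checks $F = (X\cap F)\oplus N$ and $F = ((I+K)(L^p)\cap F)\oplus Z$. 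The second of these follows because hypothesis (3) says $(I+K)(L^p)\cap F = (I+K)(F)$ — if $(I+K)(u)\in F$ then $u\in F$ — so the codimension-$\dim Z$ complement $Z$ still works inside $F$.

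Next I would check that the projections and inverse maps appearing in the definition of $S_a^p$ all restrict to continuous maps on the appropriate subspaces of $F$. The projection $Q_p : L^p \to (I+K)(L^p)$ and $\zeta_p : L^p \to Z$ restrict to continuous projections $F \to (I+K)(F)$ and $F \to Z$ (continuity on $F$ is automatic since $F \hookrightarrow L^p$ continuously and $Z$, being finite-dimensional, is closed in $F$; closedness of $(I+K)(F)$ in $F$ follows from $F = (I+K)(F)\oplus Z$ which we just established). The inverse map $V_p : (I+K)(L^p) \to X\cap L^p$ restricts to a map $(I+K)(F) \to X\cap F$; its continuity as a map between the $F$-topologies (which a priori could be finer than the $L^p$-topologies) follows from the closed graph / open mapping theorem, since $I+K$ is a continuous bijection from the Banach space $X\cap F$ onto the Banach space $(I+K)(F)$. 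Here $X\cap F$ is a Banach space because $X$ is closed in $L^1$ hence $X\cap F$ is closed in $F$; and $(I+K)(F)$ is closed in $F$ as noted.

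With all the pieces restricted to $F$, I would simply revisit the formula
\[
S_a^p(f) = S\circ V_p\circ Q_p(f) + \sum_\alpha e_\alpha^*\circ\zeta_p(f)\,\bar e_\alpha,
\]
valid for $f \in L^p$, and evaluate it on $f \in F$. The first term lands in $E$: $Q_p$ maps $F$ to $(I+K)(F)$, $V_p$ maps that to $X\cap F \subset F$, and $S$ maps $F$ continuously into $E$ by hypothesis (4). The second term is a finite sum of scalars times the fixed functions $\bar e_\alpha \in C^\infty_c(\Omega;\R^n) \subset E$ (hypothesis (1)), with the scalars depending continuously on $f \in F$. Hence $S_a^p$ maps $F$ continuously into $E$, and since $S_a|_{L^p} = S_a^p$ this is exactly the claim. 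The main obstacle I anticipate is the bookkeeping in the previous paragraph: making sure that $(I+K)(F)$ really is closed in $F$ and that $V_p$ is continuous for the $F$-topology and not merely the ambient $L^p$-topology — this is where the open mapping theorem, applied to the genuinely Banach spaces $X\cap F$ and $(I+K)(F)$, does the work, and it hinges on hypothesis (3) to identify $(I+K)(L^p)\cap F$ with $(I+K)(F)$.
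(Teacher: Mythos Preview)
Your overall strategy matches the paper's: run through the explicit formula for $S_a$ and show each piece lands in $E$ when $f\in F$. There is, however, a gap in your continuity argument. You assert that ``$I+K$ is a continuous bijection from the Banach space $X\cap F$ onto the Banach space $(I+K)(F)$'' and then invoke the open mapping theorem. But nothing in the hypotheses guarantees that $K$ maps $F$ into $F$, let alone continuously: hypothesis (3) is the \emph{reverse} regularity implication, and hypothesis (4) concerns only $S$. For the same reason your identity $(I+K)(L^p)\cap F = (I+K)(F)$ is only half-justified --- hypothesis (3) gives the inclusion $\subset$, but the opposite inclusion would again need $K(F)\subset F$. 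In particular you have not shown that $(I+K)(F)$ is even a subset of $F$, so treating it as a Banach space in the $F$-norm is premature.

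The paper sidesteps all of this by applying the closed graph theorem once, at the very start, to $S_a:F\to E$. This reduces the lemma to the bare inclusion $S_a(F)\subset E$, which is exactly what your argument (stripped of the continuity bookkeeping) establishes: $Z\subset T_a(C^\infty_c)\subset F$ by (2), so $\zeta_p(f)\in F$ and hence $Q_p(f)=f-\zeta_p(f)\in F$; then $(I+K)\bigl(V_p\circ Q_p(f)\bigr)=Q_p(f)\in F$, so $V_p\circ Q_p(f)\in F$ by (3); finally $S$ sends this into $E$ by (4), and each $\bar e_\alpha\in C^\infty_c\subset E$ by (1). No piecewise continuity checking is needed. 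Alternatively, your route can be repaired: work with $(I+K)(L^p)\cap F$ rather than $(I+K)(F)$ --- the former \emph{is} closed in $F$, being the preimage of the $L^p$-closed set $(I+K)(L^p)$ under the continuous embedding $F\hookrightarrow L^p$ --- and apply the closed graph theorem directly to $V_p|_{(I+K)(L^p)\cap F}:(I+K)(L^p)\cap F\to X\cap F$, using the known $L^p$-continuity of $V_p$ to verify the graph condition.
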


We obtain Lemma \ref{lemma:q replace by r} from Lemma \ref{lm_univ_prop} by taking \(p=q\), \(E=W^{1,r}_0\) and \(F= L^{r}\). The assumption \eqref{eq_univ_prop} is satisfied thanks to Lemma \ref{lm_inva_p}.

\begin{proof}[Proof of Lemma \ref{lm_univ_prop}]
By the closed graph theorem, we only need to prove that \(S_a(F)\subset E\).
By construction, for every \(f\in L^{p}\),
\[
S_a(f) = S\circ V_p \circ Q_p(f) + \sum_{\alpha}e_{\alpha}^* \circ \zeta_p(f)\bar{e}_\alpha.
\]
Each \(\bar{e}_\alpha\) belongs to \(C^{\infty}_c\) and thus the second term belongs to \(E\). We proceed to prove that if \(f\) belongs to \(F\), then \(S\circ V_p \circ Q_p(f)\) belongs to \(E\). Since \(S(F)\subset E\), this amounts to proving that \(V_p\circ Q_p(F) \subset F\). 

Let \(f\in F\). It follows from the definition of \(T_a\) and the second assumption that \(T_a(C^{\infty}_c)\subset F\). Since by construction,   \(Z\) is contained in \(T_a(C^{\infty}_c)\), we deduce that \(Z\) is a subset of \(F\) and thus \(\zeta_p(f)\in F\). Using the decomposition \(f=Q_p(f)+ \zeta_p(f)\) and the fact that \(f\in F\), one gets that \(Q_p(f)\in F\).

Let \(u= V_p \circ Q_p(f)\). Then \((I+K)(u)=Q_p(f)\in F\). We now rely on \eqref{eq_univ_prop} to get that \(u\in F\); that is  \(V_p\circ Q_p(f)\in F\). The proof is complete. 
\end{proof}

%\begin{proof}
%By the closed graph theorem, we only need to  prove that \(S_a(L^{r}(\Omega))\subset W^{1,r}_{0}(\Omega)\).
%By construction, for every \(f\in L^{r}(\Omega)\),
%\[
%S_{a}(f)=S_{a}^q(f) = S\circ V_q \circ Q_q(f) + \sum_{\alpha} e_{\alpha}^* \circ \zeta_q(f)\bar{e}_\alpha.
%\]
%On the one hand, each \(\bar{e}_{\alpha}\) belongs to \(C^{\infty}_c(\Omega)\) so that \(\sum_{\alpha} e_{\alpha}^* \circ \zeta_q(f)\bar{e}_\alpha \in C^{\infty}_c(\Omega)\).

%On the other hand, with the notation of the proof of Theorem \ref{th-main}, one can write \(f=Q_q(f)+\zeta_q(f)\in (I+K)(L^q)\oplus Z\). Since \(Z\subset T(C^{\infty}_c)\subset L^{r}\), one has \(Q_q(f)\in L^{r}\). 
%Let \(u:=V_q\circ Q_q(f)\). Then \((I+K)(u)=Q_q(f)\in L^{r}\). Lemma \ref{lm_inva_p} (with \(r\) replacing \(q\) and \(q\) replacing \(p\)) implies that \(u\in L^{r}\).
%It follows that 
%\[
%S\circ V_q \circ Q_q(f) = Su \in W^{1,r}_0.
%\]

%\end{proof}

\section{Universal property for \(S_a\)}
\label{section:higher regularity}

In this section we deal with higher order Sobolev and H\"older spaces.
As explained in the introduction, a  right inverse \(\overline{S_0}\) to the divergence operator 
\(T_0=\diw\) which is universal in the scales of these spaces, is only available in the literature when \(\Omega\) is at least \(C^2\), see Theorem \ref{theorem:a is zero higher regularity}. Hence, we assume this regularity property on \(\Omega\) throughout this section. We can repeat the same construction as in Section 2 with \(\overline{S_0}\) instead \(S_0\). More precisely, in the definition \eqref{eq_def_S} of \(S\), the operator \(S_0\) has to be replaced by \(\overline{S_0}\). We thus obtain a linear map \(\overline{S_a}\) which  is a right inverse to \(T_a\) and is universal in the scale of Lebesgue spaces.  

We proceed to prove that \(\overline{S_a}\) is also universal in the scale of higher order Sobolev spaces and H\"older spaces. We rely on Lemma \ref{lm_univ_prop} and we first check that the assumption \ref{eq_univ_prop} is satisfied in our framework.

\begin{lemma}\label{lm-univ-sobolev}
Let \(m\geq 0\), \(r> \frac{n}{m+1}\) and  \(a\in W^{m,r}(\Omega ; \R^n)\). Assume that \(\Omega\) is of class \(C^{m+2}\). Then  for every \(1<p\leq r\) and \(f\in L^{p}(\Omega)\),
\[
(I+K)(f)\in W^{m,p}(\Omega) \;\Longrightarrow\;  f\in  W^{m,p}(\Omega).
\]
\end{lemma}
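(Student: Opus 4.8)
\textit{Setup and Step 1.} The plan is to argue by induction on $m$. The base case $m=0$ is trivial, since then the conclusion $f\in W^{0,p}=L^{p}(\Omega)$ is already the hypothesis. So fix $m\geq 1$ and assume the statement for $m-1$, for every admissible exponent and every domain of class $C^{m+1}$. Since $r>\frac{n}{m+1}$, the Sobolev embedding gives $a\in W^{m,r}(\Omega;\R^n)\subset W^{m-1,r'}(\Omega;\R^n)$ for a suitable exponent $r'>\frac{n}{m}$ with $r'\geq p$. As $(I+K)(f)\in W^{m,p}\subset W^{m-1,p}$ and $\Omega$ is of class $C^{m+2}\subset C^{m+1}$, the inductive hypothesis (applied with $m-1$ and $r'$ in place of $m$ and $r$, the operator $I+K$ being unchanged) yields $f\in W^{m-1,p}(\Omega)$; in particular $\widetilde{f}:=f-\fint_{\Omega}f$ lies in $W^{m-1,p}(\Omega)\cap L^{1}_{\sharp}(\Omega)$.

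\textit{Step 2: basic identity and bootstrap.} From the definitions of $K$ and $\overline{S_0}$ (with $\overline{S_0}$ in place of $S_0$ in \eqref{eq_def_S}) one has the identity
$$
  \widetilde{f}=(I+K)(f)-\langle a,\overline{S_0}(\widetilde{f})\rangle,
$$
so it suffices to prove $\langle a,\overline{S_0}(\widetilde{f})\rangle\in W^{m,p}(\Omega)$. The key tool is pointwise multiplication in higher order Sobolev spaces: expanding $\partial^{\gamma}\langle a,w\rangle$ by the Leibniz rule and estimating each term $\partial^{\beta}a\,\partial^{\gamma-\beta}w$ by Hölder's inequality and the Sobolev embeddings, one gets, for $a\in W^{m,r}$ with $r>\frac{n}{m+1}$, that $\langle a,w\rangle\in W^{m-1,\rho_{1}}$ for some $\rho_{1}>\rho$ whenever $w\in W^{m,\rho}$, and that $\langle a,w\rangle\in W^{m,p}$ as soon as $w\in W^{m,\rho}$ with $\rho$ large enough (this last point being where $r>\frac{n}{m+1}$ is used sharply, to put $a\,\partial^{m}w$ into $L^{p}$). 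Applying Theorem \ref{theorem:a is zero higher regularity}(1) at level $m-1$ (legitimate since $\Omega$ is $C^{m+1}$) gives $\overline{S_0}(\widetilde{f})\in W^{m,p}(\Omega)$, hence $\langle a,\overline{S_0}(\widetilde{f})\rangle\in W^{m-1,p_{1}}(\Omega)$ with $p_{1}>p$; since moreover $(I+K)(f)\in W^{m,p}\subset W^{m-1,p^{*}}$ for some $p^{*}>p$, the identity above upgrades $\widetilde{f}$ to $W^{m-1,\min(p_{1},p^{*})}(\Omega)$. Iterating this, much as in the proof of Lemma \ref{lm_inva_p}, and noting that the reciprocal Lebesgue exponents drop by at least a fixed positive amount at each step, after finitely many iterations $\widetilde{f}\in W^{m-1,\rho}(\Omega)$ with $\rho$ as large as we please.

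\textit{Step 3: conclusion.} For such a large $\rho$, Theorem \ref{theorem:a is zero higher regularity}(1) gives $\overline{S_0}(\widetilde{f})\in W^{m,\rho}(\Omega)$, and the multiplication estimate of Step 2 yields $\langle a,\overline{S_0}(\widetilde{f})\rangle\in W^{m,p}(\Omega)$. Substituting into $\widetilde{f}=(I+K)(f)-\langle a,\overline{S_0}(\widetilde{f})\rangle$ and using $(I+K)(f)\in W^{m,p}$, we obtain $\widetilde{f}\in W^{m,p}(\Omega)$, hence $f\in W^{m,p}(\Omega)$, which closes the induction.

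The step I expect to be the main obstacle is the pointwise multiplication estimate under the sharp hypothesis $r>\frac{n}{m+1}$: since $a$ need not be bounded, each product $\partial^{\beta}a\,\partial^{\gamma-\beta}w$ in the Leibniz expansion has to be controlled individually, and it is precisely the single degree of regularization supplied by $\overline{S_0}$ that makes the Hölder exponents close up. A secondary technical point is the bookkeeping of the bootstrap, namely checking that the Lebesgue exponents strictly increase at each stage and that the procedure terminates after finitely many steps.
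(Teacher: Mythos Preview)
Your strategy is sound and does close, but the phrase ``as large as we please'' in Step~2 is inaccurate when $p<n$: the term $(I+K)(f)\in W^{m,p}$ embeds only into $W^{m-1,p^{*}}$, so the bootstrap stalls at $\rho=p^{*}$. What you then need---and you correctly anticipate this as the main obstacle---is that $\rho=p^{*}$ is \emph{exactly} large enough for Step~3. A Leibniz/H\"older computation shows $\langle a, W^{m,\rho}\rangle\subset W^{m,p}$ whenever $\tfrac{1}{\rho}\leq \tfrac{1}{p}-\tfrac{1}{r}+\tfrac{m}{n}$, and $\rho=p^{*}$ satisfies this precisely because $r>\tfrac{n}{m+1}$. (When $p\geq n$ the cap disappears and your phrasing is literally correct.) This multiplication fact is packaged in the paper as Corollary~\ref{corollary:Sob multiply}(ii); once you invoke it with $\rho=p^{*}$ the argument is complete.

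The paper organizes things differently: it \emph{strengthens the induction hypothesis} to let the target exponent float, proving that for all $1<p\leq s\leq r$, $(I+K)(f)\in W^{m,s}$ implies $f\in W^{m,s}$. This makes the base case $m=0$ nontrivial---it becomes a Lebesgue-exponent bootstrap in the style of Lemma~\ref{lm_inva_p}---but the inductive step is then a one-shot: apply the hypothesis at level $m$ with exponent $\bar s=s^{*}$ to get $\overline{S_0}(\widetilde f)\in W^{m+1,s^{*}}$, then invoke Corollary~\ref{corollary:Sob multiply} once to land in $W^{m+1,s}$. Your version trades a trivial base case for an internal bootstrap at every inductive step; both are valid, but the paper's packaging concentrates all the integrability-chasing at $m=0$ and keeps the induction cleaner.
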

\begin{proof}
We prove by induction on \(m\in \N\) the following more general result:
If \(\Omega\) is of class \(C^{m+2}\) and \(a\in W^{m,r}(\Omega;\re^n)\) with \(r>\frac{n}{m+1}\), then for every \(1<p\leq s\leq r\) and every \(f\in L^{p}(\Omega)\), 
\[
(I+K)(f)\in W^{m,s}(\Omega) \;\Longrightarrow\;  f\in  W^{m,s}(\Omega).
\]
Remember that
$$
(I+K)(f)=\widetilde{f} + \langle a, \overline{S_0}(\widetilde{f})\rangle
 \qquad \textrm{ where } \widetilde{f}=f-\fint_{\Omega}f.
$$
We first consider the case \(m=0\) which is a slight generalization of Lemma \ref{lm_inva_p}. If \(p=s\), there is nothing to prove. Hence, we assume that \(p<s\). If \(s>n\), then we can apply Lemma \ref{lm_inva_p} with \(q=s\), which  gives that \(f\in L^s\). If \(s\leq n\), we rely on the fact that \(\overline{S_0}(\widetilde{f})\in W^{1,p}\) and the Sobolev embedding \(W^{1,p}\subset L^{p^*}\) (observe that we are in the case where \(p<s\leq n\)) to get that \(\overline{S_0}(\widetilde{f})\in L^{p^*}\). This implies that 
\begin{equation}\label{eq794}
\langle a, \overline{S_0}(\widetilde{f}) \rangle \in L^{p_1}, \qquad  \textrm{with } 
\frac{1}{p_1}=\frac{1}{p}-\frac{1}{n}+\frac{1}{r}.
\end{equation}
If \(p_1\geq s\), then \(\widetilde{f}\in L^{s}\), as desired. Otherwise, \(\widetilde{f}\in L^{p_1}\).

We next introduce as in the proof of Lemma \ref{lm_inva_p} the sequence
\[
\frac{1}{p_k} =\frac{1}{p_{k-1}} +\frac{1}{r} -\frac{1}{n}, \qquad k\in \N.
\]
We have already seen that there exists \(k_0\in \N\) such that \(1<p_k<n\) for \(k\leq k_0\) and \(p_{k_0+1}\geq n\). This implies that there exists \(k_1\in \N\) such that \(1<p_k<s\) for every \(k\leq k_1\)  and \(p_{k_1+1}\geq s\). Repeating the  argument leading to \eqref{eq794} for \(p_1, \dots, p_{k_1}\), we finally obtain that \(\langle a, \overline{S_0}(\widetilde{f}) \rangle \in L^{p_{k_1+1}}\). This gives \(\widetilde{f}\in L^{s}\), and completes the proof in the case \(m=0\).

We now assume that the property is true for some \(m\geq 0\) and let us prove it for \(m+1\). Let
$$
  a\in W^{m+1,r}(\Omega ; \R^n)\quad\text{ with }\quad r>\frac{n}{m+2}. 
$$
Let \(1<p\leq s\leq r\) and \(f\in L^{p}(\Omega;\R^n)\) such that
\[
\widetilde{f} + \langle a , \overline{S_0}(\widetilde{f}) \rangle \in W^{m+1,s}
\]
When \(r<n\), we set \(\overline{r}=r^*\) and observe that \(r>\frac{n}{m+2}\) implies \(\overline{r}>\frac{n}{m+1}\). When \(r\geq n\), we take for \(\overline{r}\) any number \(>\frac{n}{m+1}\). In any case, by the Sobolev embeddings, \(a\in W^{m,\overline{r}}\). 
Let
\[
\overline{s}:=\begin{cases}
s^* & \textrm{ if } s<n,\\
\overline{r} & \textrm{ if } s\geq n.
\end{cases}
\]
Then \(\widetilde{f} + \langle a , \overline{S_0}(\widetilde{f}) \rangle \in W^{m,\overline{s}}\). Since \(1<p\leq \overline{s}\leq \overline{r}\),  the induction assumption  yields \(\widetilde{f}\in W^{m, \overline{s}}\). Hence, \(\overline{S_0}(\widetilde{f}) \in W^{m+1, \overline{s}}\). 

When \(s\geq n\), we use that \(a\in W^{m+1, s}\) and rely on  Corollary \ref{corollary:Sob multiply} (i) below (applied with  $s$  instead of \(p\),  $q=\overline{s}$ and with \(m+1\) instead of \(m\)). This gives
$$
  \langle a, \overline{S_0}(\widetilde{f})\rangle\in W^{m+1,s}.
$$
Hence,  \(\widetilde{f}\in W^{m+1,s}\), which proves the induction assumption for \(m+1\).

When \(s<n\), we use that \(\overline{S_0}(\widetilde{f}) \in W^{m+1, s^*}\) and  apply Corollary \ref{corollary:Sob multiply} (ii) (with  $s$  instead of \(p\), \(q=r\) and  \(m+1\) instead of \(m\)). We deduce that
$$
  \langle a, \overline{S_0}(\widetilde{f})\rangle\in W^{m+1,s}
$$
and conclude as before. 
The proof is complete.
\end{proof}

The corresponding statement in the scale of H\"older spaces reads:

\begin{lemma}\label{lm-univ-holder}
 Let \(m\geq 0\), \(\alpha \in (0,1)\) and \(a\in C^{m,\alpha}(\Omegabar)\). Assume that \(\Omega\) is of class \(C^{m+2, \alpha}\).  Then for every \(1< p <\infty\) and for every \(f\in L^{p}(\Omega)\),
\[
(I+K)(f)\in C^{m,\alpha}(\Omegabar) \Longrightarrow f\in C^{m,\alpha}(\Omegabar).
\] 
\end{lemma}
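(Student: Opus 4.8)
The strategy is to mimic the proof of Lemma \ref{lm-univ-sobolev}, replacing Sobolev bootstrap by H\"older bootstrap, with the crucial simplification that $C^{m,\alpha}(\Omegabar)$ is an algebra, so no exponent chasing is needed once we reach the H\"older scale. Writing as usual $(I+K)(f)=\widetilde f+\langle a,\overline{S_0}(\widetilde f)\rangle$ with $\widetilde f=f-\fint_\Omega f$, the hypothesis is $\widetilde f+\langle a,\overline{S_0}(\widetilde f)\rangle\in C^{m,\alpha}(\Omegabar)$, and we want $\widetilde f\in C^{m,\alpha}(\Omegabar)$ (equivalently $f\in C^{m,\alpha}(\Omegabar)$).

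First I would argue by induction on $m$. For the base case $m=0$, we have $a\in C^{0,\alpha}(\Omegabar)\subset L^\infty\subset L^r$ for every $r<\infty$, and $\widetilde f\in L^p$; since $\overline{S_0}$ maps $L^p$ into $W^{1,p}$, the bootstrap of Lemma \ref{lm_inva_p} (or Lemma \ref{lm-univ-sobolev} with $m=0$) applies with any large $q$ and gives $\widetilde f\in L^q$ for all $q<\infty$, hence $\overline{S_0}(\widetilde f)\in W^{1,q}$ for all $q<\infty$. Pick $q>n$: then by Morrey's embedding $\overline{S_0}(\widetilde f)\in C^{0,\beta}(\Omegabar)$ for $\beta=1-n/q$, and taking $q$ large we may assume $\beta\geq\alpha$, so $\overline{S_0}(\widetilde f)\in C^{0,\alpha}(\Omegabar)$. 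Since $C^{0,\alpha}(\Omegabar)$ is a Banach algebra, $\langle a,\overline{S_0}(\widetilde f)\rangle\in C^{0,\alpha}(\Omegabar)$, and therefore $\widetilde f=(I+K)(f)-\langle a,\overline{S_0}(\widetilde f)\rangle\in C^{0,\alpha}(\Omegabar)$, as desired. (Here I use that $\Omega$ is $C^2$, so that $\overline{S_0}$ from Theorem \ref{theorem:a is zero higher regularity} is available and has the stated mapping properties on the relevant spaces.)

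For the inductive step, assume the statement holds for $m-1$ and let $a\in C^{m,\alpha}(\Omegabar)$, $\Omega$ of class $C^{m+2,\alpha}$. From $\widetilde f+\langle a,\overline{S_0}(\widetilde f)\rangle\in C^{m,\alpha}(\Omegabar)\subset C^{m-1,\alpha}(\Omegabar)$ and $a\in C^{m-1,\alpha}(\Omegabar)$, the induction hypothesis gives $\widetilde f\in C^{m-1,\alpha}(\Omegabar)$. By Theorem \ref{theorem:a is zero higher regularity} (2) applied with $m-1$ in place of $m$ (the regularity of $\Omega$ is more than enough), $\overline{S_0}(\widetilde f)\in C^{m,\alpha}_z(\Omegabar;\R^n)\subset C^{m,\alpha}(\Omegabar;\R^n)$. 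Now I invoke the fact that $C^{m,\alpha}(\Omegabar)$ is an algebra under pointwise multiplication (this is the H\"older analogue of Corollary \ref{corollary:Sob multiply}, and is standard): since $a\in C^{m,\alpha}(\Omegabar)$ and $\overline{S_0}(\widetilde f)\in C^{m,\alpha}(\Omegabar)$, we get $\langle a,\overline{S_0}(\widetilde f)\rangle\in C^{m,\alpha}(\Omegabar)$. Subtracting from $(I+K)(f)\in C^{m,\alpha}(\Omegabar)$ yields $\widetilde f\in C^{m,\alpha}(\Omegabar)$, completing the induction.

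**Main obstacle.** There is no genuinely hard step here; the only points requiring care are bookkeeping ones. First, one must make sure the base case really reduces to the already-proved $L^p$ bootstrap: the subtlety is that $\overline{S_0}$ is a right inverse to $\diw$ only on mean-zero functions, but that is exactly why we work with $\widetilde f$ rather than $f$, and Lemma \ref{lm_inva_p} is stated in precisely this form. Second, one should confirm that at each stage the domain regularity $C^{m+2,\alpha}$ is sufficient for the version of Theorem \ref{theorem:a is zero higher regularity} being invoked — it always is, since we only ever use it at levels $\leq m$. Third, one needs the (elementary but worth citing) fact that $C^{k,\alpha}(\Omegabar)$ is closed under products for a bounded $C^{0,1}$, hence a fortiori $C^{m+2,\alpha}$, domain. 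Modulo these, the H\"older case is strictly easier than the Sobolev case of Lemma \ref{lm-univ-sobolev}, because the algebra property removes the need to track how multiplication by $a$ degrades the integrability exponent.
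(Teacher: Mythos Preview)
Your proof is correct and follows essentially the same route as the paper: first bootstrap in the Lebesgue scale to reach $C^{0,\alpha}$, then climb the H\"older ladder using the algebra property of $C^{k,\alpha}(\Omegabar)$. The only cosmetic differences are that the paper redoes the $L^p$ bootstrap explicitly (rather than citing Lemma~\ref{lm_inva_p}, since in Section~\ref{section:higher regularity} the operator $K$ is built from $\overline{S_0}$ instead of $S_0$) and organizes the higher-order step as a direct iteration over $k=0,\dots,m-1$ instead of an induction on $m$.
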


\begin{proof}
Let \(f\in L^{p}\), \(1<p<\infty\), such that for some \(m\geq 0\) and \(\alpha\in (0,1)\), 
\begin{equation}
 \label{eq:bootstrap in Hoelder}
  \widetilde{f} + \langle a ,  \overline{S_0}(\widetilde{f}) \rangle  \in C^{m,\alpha}, 
\end{equation}
where as usual $\widetilde{f}=f-\frac{1}{|\Omega|}\int_{\Omega}f$.
\smallskip

\textit{Step 1.} We shall first show that $\widetilde{f}\in C^{0,\alpha}.$ We can assume that $p<n,$ because if the lemma holds in this case, then it certainly also holds for $p\geq n.$ 
Using that \(C^{m,\alpha}\subset L^{\infty}\),  \( \overline{S_0}(\widetilde{f})\in W^{1,p}\subset L^{p_*}\) and \(a\in L^{\infty}\), we deduce that
\begin{equation}\label{eq847}
  \tilde{f}\in L^{p^*},\quad\text{ with }\quad p^*=\frac{np}{n-p}.
\end{equation}
By a similar argument as in Case 3 in the proof of Lemma \ref{lm_inva_p}, there exists \(k_0\in \N\) such that the sequence \((p_k)_{k\in \N}\) defined by 
\[
p_0=p, \qquad \qquad  \forall k\geq 1, \qquad \frac{1}{p_k}=\frac{1}{p_{k-1}}-\frac{1}{n}=\frac{1}{p}-\frac{k}{n}
\]
satisfies \(1<p_k<n\) for \(k\leq k_0\) and \(p_{k_0+1}\geq n\).
By bootstrapping the argument leading to \eqref{eq847},  we obtain that $\widetilde{f}\in L^{p_{k_0+1}},$  Thus we get
$  \langle a, \overline{S_0}(\widetilde{f})\rangle\in L^{p_{k_0+1}}$ which implies that $\widetilde{f}\in L^{p_{k_0+1}}$.
Hence, $ \overline{S_0}(\widetilde{f})\in W^{1,p_{k_0+1}}\subset W^{1,n}$ which in turn implies by \eqref{eq:bootstrap in Hoelder}  that $\tilde{f}\in L^r$ for any $r<\infty.$ It follows that \( \overline{S_0}(\widetilde{f})\)  belongs to \(\cap_{1<r<\infty}W^{1,r}\). By the Morrey embedding, we deduce therefrom that \( \overline{S_0}(\widetilde{f})\)  belongs to \(\cap_{0<\beta<1}C^{0,\beta}\). Since \(a\in C^{0,\alpha}\), this implies  that \(\langle a,  \overline{S_0}(\widetilde{f})\rangle\) belongs to \(C^{0,\alpha}\). Therefore, by  \eqref{eq:bootstrap in Hoelder}, \(\widetilde{f}\in C^{0,\alpha}\).
\smallskip

\textit{Step 2.} If $m=0$ we are done by Step 1.
Otherwise, for every \(0\leq k \leq m-1\), the operator \(u\mapsto \langle a , u \rangle\) maps continuously \(C^{k+1,\alpha}\) into \(C^{k+1,\alpha}\), see  \cite[Theorem 16.28]{Csato-Dacorogna-Kneuss}.  
Together with the facts that   \( \overline{S_0} (C^{k,\alpha}) \subset C^{k+1,\alpha}\) and \(C^{m,\alpha} \subset C^{k+1,\alpha}\), it follows by induction on \(k=0, \dots, m-1\) that  \(\widetilde{f}\in C^{m,\alpha}(\Omegabar)\). 
\end{proof}

\smallskip

\begin{proof}[Proof of Theorem \ref{th-universal-property}]
In the scale of Sobolev spaces, we apply Lemma \ref{lm_univ_prop} with $\overline{S_a}$ instead of \(S_a\), to the sets \(E=W^{m+1,p}_z(\Omega;\R^n)\) and \(F=W^{m,p}(\Omega)\). In view of Lemma \ref{lm-univ-sobolev}, the assumption \eqref{eq_univ_prop} is satisfied. Theorem \ref{theorem:a is zero higher regularity} implies that \(\overline{S} : f\mapsto \overline{S_0}\left(f-\frac{1}{|\Omega|}\int_{\Omega}f\right)\) maps continuously \(W^{m,p}\) into \(W^{m+1,p}_{z}\).
The conclusion follows in that case.

In the scale of H\"older spaces, we rely on Lemma \ref{lm_univ_prop}  with \(E=C^{m+1,\alpha}_z(\Omegabar;\R^n)\) and \(F=C^{m,\alpha}(\Omegabar)\). Here, assumption \eqref{eq_univ_prop} follows from Lemma \ref{lm-univ-holder}. Theorem \ref{theorem:a is zero higher regularity} now implies that \(\overline{S}\) maps continuously \(C^{m,\alpha}\) into \(C^{m+1,\alpha}_{z}\). The proof is complete.
\end{proof}

\section{Non-existence results in $L^1$ and $L^{\infty}$}

First we discuss the existence in $L^1,$ which is easier.
These results have nothing to do with the boundary conditions, and the proofs are almost identical to the case $a=0,$ see for instance \cite{Bourgain-Brezis}.

\begin{theorem}
\label{theorem:non reg in L1}
Let $\Omega\subset\re^n$ be a bounded Lipschitz set. Let $a\in L^{\infty}(\Omega;\re^n).$
Then there exists $f\in L^1(\Omega)$ such that there is no $u\in W^{1,1}(\Omega;\re^n)$ with $\diw u+\langle a,u\rangle =f.$
\end{theorem}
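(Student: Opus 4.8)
The plan is to exploit an obstruction of Sobolev type: at the exponent $p=1$ the Sobolev embedding forces $T_a(u)=\diw u+\langle a,u\rangle$ to land in a \emph{proper} subspace of $L^1(\Omega)$, namely $W^{-1,n/(n-1)}(\Omega)$, for every $u\in W^{1,1}(\Omega;\re^n)$, and it then suffices to exhibit one $f\in L^1(\Omega)$ outside that subspace. Because $a\in L^\infty$, the lower order term $\langle a,u\rangle$ is no worse than $u$ itself, so the argument is literally the one for $a=0$. I work with $n\geq 2$; for $n=1$ the equation is a first order linear ODE, solvable for every right hand side, so there is nothing to prove there.

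\textbf{Step 1 (range of $T_a$).} For a bounded Lipschitz set one has the Sobolev embedding $W^{1,1}(\Omega)\hookrightarrow L^{n/(n-1)}(\Omega)$; hence every $u\in W^{1,1}(\Omega;\re^n)$ satisfies $u\in L^{n/(n-1)}(\Omega;\re^n)$, and since $a\in L^\infty$ also $\langle a,u\rangle\in L^{n/(n-1)}(\Omega)\hookrightarrow W^{-1,n/(n-1)}(\Omega)$. Moreover, for $\varphi\in C_c^\infty(\Omega)$ one has $\int_\Omega(\diw u)\varphi=-\int_\Omega\langle u,\nabla\varphi\rangle$, which is bounded by $\|u\|_{L^{n/(n-1)}}\|\nabla\varphi\|_{L^n}$ by H\"older; since $C_c^\infty(\Omega)$ is dense in $W^{1,n}_0(\Omega)$ and $\|\nabla\cdot\|_{L^n}$ is an equivalent norm there, $\diw u$ extends to a bounded linear functional on $W^{1,n}_0(\Omega)$, i.e. $\diw u\in W^{-1,n/(n-1)}(\Omega)$. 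Only compactly supported test functions intervene, so no boundary condition on $u$ is used. Consequently $T_a(u)\in W^{-1,n/(n-1)}(\Omega)$ for every $u\in W^{1,1}(\Omega;\re^n)$.

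\textbf{Step 2 (a bad right hand side).} After a translation assume $0\in\Omega$ and fix $\rho<1$ with the ball $B_\rho\subset\Omega$. Pick $\beta\in(1,2-\tfrac1n)$ and $\gamma\in[\beta-1,\tfrac{n-1}{n})$ — a nonempty choice precisely because $n\geq 2$ — a cutoff $\psi\in C_c^\infty(B_\rho)$ with $0\leq\psi\leq 1$ and $\psi\equiv 1$ on $B_{\rho/2}$, and define $f$ to vanish outside $B_\rho$ and equal $|x|^{-n}(\log(1/|x|))^{-\beta}$ on $B_\rho$, and $\varphi(x)=\psi(x)(\log(1/|x|))^\gamma$. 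In polar coordinates, $f\in L^1(\Omega)$ reduces to $\int_0^\rho\tfrac{dr}{r(\log(1/r))^\beta}<\infty$ (true since $\beta>1$), while $\varphi\in W^{1,n}(\Omega)$ reduces to $\int_0^\rho\tfrac{(\log(1/r))^{n(\gamma-1)}}{r}\,dr<\infty$ (true since $\gamma<\tfrac{n-1}{n}$); as $\varphi$ has compact support, $\varphi\in W^{1,n}_0(\Omega)$, and $\varphi$ is unbounded near $0$. Let $\varphi_k=\min(\varphi,k)\in W^{1,n}_0(\Omega)$; truncation does not increase the $W^{1,n}$ norm, so $M:=\sup_k\|\varphi_k\|_{W^{1,n}}<\infty$, whereas by monotone convergence $\int_\Omega f\varphi_k\to\int_\Omega f\varphi\geq c_n\int_0^{\rho/2}\tfrac{(\log(1/r))^{\gamma-\beta}}{r}\,dr=+\infty$ since $\gamma-\beta\geq -1$. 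If $f$ lay in $W^{-1,n/(n-1)}(\Omega)$, then (the duality pairing agreeing with $\int_\Omega f\varphi_k$ on bounded elements of $W^{1,n}_0$, by dominated convergence along a mollification) we would get $|\int_\Omega f\varphi_k|\leq\|f\|_{W^{-1,n/(n-1)}}M$ for every $k$, a contradiction. Hence $f\in L^1(\Omega)\setminus W^{-1,n/(n-1)}(\Omega)$, and by Step 1 no $u\in W^{1,1}(\Omega;\re^n)$ can satisfy $\diw u+\langle a,u\rangle=f$.

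The technical heart — and the only nontrivial step — is Step 2: one must balance the logarithmic exponent $\beta$ (which governs $f\in L^1$) against $\gamma$ (which governs $\varphi\in W^{1,n}_0$ yet $\varphi\notin L^\infty$) so that $\int_\Omega f\varphi=+\infty$; this is exactly the classical witness to the failure of the embedding $W^{1,n}_0\hookrightarrow L^\infty$. Step 1 is a routine application of the Sobolev embedding at $p=1$ together with integration by parts against compactly supported test functions, and the presence of $\langle a,u\rangle$ changes nothing since $a\in L^\infty$ — which is why the proof is, as claimed, identical to the case $a=0$.
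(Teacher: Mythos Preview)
Your proof is correct but follows a different route from the paper's. The paper argues by contradiction via the open mapping theorem: if $T_a:W^{1,1}\to L^1$ were onto, then solutions would satisfy $\|u\|_{W^{1,1}}\leq C\|f\|_{L^1}$, and pairing against $\varphi\in W^{1,n}_0$ together with the embedding $W^{1,1}\subset L^{n/(n-1)}$ yields $\left|\int_\Omega\varphi f\right|\leq C\|\varphi\|_{W^{1,n}}\|f\|_{L^1}$ for all $f\in L^1$; by duality this forces $\|\varphi\|_{L^\infty}\leq C\|\varphi\|_{W^{1,n}}$, contradicting the non-embedding $W^{1,n}_0\not\subset L^\infty$. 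You instead work directly: Step~1 shows $T_a(W^{1,1})\subset W^{-1,n/(n-1)}$ (same Sobolev embedding and integration by parts, no open mapping theorem needed), and Step~2 produces an explicit $f\in L^1\setminus W^{-1,n/(n-1)}$ via the standard logarithmic witness to that same non-embedding. The paper's argument is shorter and more abstract; yours is constructive and gives a concrete $f$, at the price of the bookkeeping with the exponents $\beta,\gamma$ and the truncation argument to identify the integral with the duality pairing. Both rest on exactly the same obstruction, just read in dual directions. Your remark on $n=1$ is also apt: the paper's proof silently requires $n\geq 2$ for the non-embedding to hold.
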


\begin{proof} 
Suppose by contradiction that for all $f\in L^1$ there exists a $u\in W^{1,1}$ such that
$$
  T_a(u)=\diw u+\langle a;u\rangle=f.
$$
Hence the map $T_a:W^{1,1}\to L^1$ is onto. Moreover it is bounded and linear. Hence by the open mapping theorem there is a constant $C>0,$ such that for every $f\in L^1$ there exists a $u$ with $T_a(u)=f$ and
$$
  \|u\|_{W^{1,1}}\leq C_1 \|f\|_{L^1}.
$$
We will show that for all $\varphi\in W_0^{1,n}$ and all $f\in L^1$ we have
\begin{equation}
 \label{eq:proof:first contradiction f and u L1}
  \left|\intomega \varphi f\right|\leq C(\Omega,\|a\|_{L^{\infty}},C_1) \|\varphi\|_{W^{1,n}}\|f\|_{L^1}.
\end{equation}
Indeed we have that
\begin{align*}
 \left|\intomega \varphi f\right|=&\left|\intomega \varphi(\diw u+\langle a;u\rangle)\right|=\left|-\intomega (\nabla \varphi)\, u+\intomega \varphi\langle a;u\rangle\right| 
 \smallskip \\
 \leq& \|\nabla \varphi\|_{L^n}\|u\|_{L^{\frac{n}{n-1}}}+\|a\|_{L^{\infty}}\|\varphi\|_{L^n}\|u\|_{L^{\frac{n}{n-1}}}.
\end{align*}
We now use the continuous embedding $W^{1,1}\subset L^{\frac{n}{n-1}}$ and obtain that
\begin{align*}
 \left|\intomega \varphi f\right|\leq C \|\varphi\|_{W^{1,n}}\|u\|_{W^{1,1}}\leq C \|\varphi\|_{W^{1,n}}\|f\|_{L^1}.
\end{align*}
This proves \eqref{eq:proof:first contradiction f and u L1}. We are now able to conclude. Consider the map $M_{\varphi}:L^1\to \re$, for every fixed $\varphi\in W^{1,n}_0$, given by $M_{\varphi}(f)=\int \varphi f.$ Then by \eqref{eq:proof:first contradiction f and u L1} we have that
$$
  M_\varphi\in (L^1)^{\ast}=L^{\infty}
\quad \textrm{ and } \quad 
  \|\varphi\|_{L^{\infty}}\leq C\|\varphi\|_{W^{1,n}}.
$$
This is a contradiction to the non embedding \(W^{1,n}\not\subset L^{\infty}\).
\end{proof}
\smallskip

We now deal with the
 $L^{\infty}$ non-existence.

\begin{theorem} 
Let $\Omega\subset\re^n$ be a bounded open set. Let $a\in W^{1,\infty}(\Omega;\re^n).$
Then there exists $f\in L^{\infty}(\Omega)$ such that there is no $u\in W^{1,\infty}(\Omega)$ with $\diw u+\langle a;u\rangle =f.$
\end{theorem}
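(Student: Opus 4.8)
The plan is to mimic the structure of the $L^1$ non-existence argument (Theorem \ref{theorem:non reg in L1}), replacing the duality pair $(L^1,L^\infty)$ by $(L^\infty, L^1)$ and replacing the obstruction ``$W^{1,n}\not\subset L^\infty$'' by the obstruction ``$L^\infty$ is not contained in the space of functions of bounded mean oscillation'' — or, more elementarily, by the failure of the endpoint Sobolev embedding $W^{1,n}_0 \not\subset L^\infty$ used ``in the dual direction''. Concretely: suppose for contradiction that $T_a : W^{1,\infty}(\Omega;\re^n)\to L^\infty(\Omega)$ is onto. It is bounded and linear, so by the open mapping theorem there is $C_1>0$ such that every $f\in L^\infty$ admits $u\in W^{1,\infty}$ with $T_a(u)=f$ and $\|u\|_{W^{1,\infty}}\le C_1\|f\|_{L^\infty}$.

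Next I would test this against smooth functions $\varphi$. For $\varphi\in C_c^\infty(\Omega)$ and any $f\in L^\infty$, pick such a $u$ and compute, exactly as in the $L^1$ proof,
\[
\left|\intomega \varphi f\right| = \left|\intomega \varphi(\diw u + \langle a,u\rangle)\right| = \left|-\intomega \langle\nabla\varphi, u\rangle + \intomega \varphi\langle a,u\rangle\right| \le \big(\|\nabla\varphi\|_{L^1} + \|a\|_{L^\infty}\|\varphi\|_{L^1}\big)\|u\|_{L^\infty}.
\]
Using $\|u\|_{L^\infty}\le \|u\|_{W^{1,\infty}}\le C_1\|f\|_{L^\infty}$, this yields
\[
\left|\intomega \varphi f\right| \le C\,\|\varphi\|_{W^{1,1}}\,\|f\|_{L^\infty}
\]
with $C=C(\Omega,\|a\|_{L^\infty},C_1)$. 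Taking the supremum over all $f\in L^\infty$ with $\|f\|_{L^\infty}\le 1$, the left side becomes $\|\varphi\|_{L^1}$ (the norm of the functional $f\mapsto\int\varphi f$ on $L^\infty$, which is attained), so we only recover $\|\varphi\|_{L^1}\le C\|\varphi\|_{W^{1,1}}$ — true and useless. So the naive transcription does \emph{not} work, and the genuine content must come from a sharper choice.

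The fix is to not discard the gradient term: instead of bounding $\|u\|_{L^\infty}$ crudely, keep $\|u\|_{W^{1,\infty}}$ and integrate by parts the other way, or — cleaner — exploit that $u\in W^{1,\infty}$ means $u$ is Lipschitz, hence one can put \emph{two} derivatives worth of regularity on the test side. Precisely, for $\varphi$ with enough smoothness write $\intomega\langle\nabla\varphi,u\rangle$; since $u$ is only Lipschitz this is already the best pairing unless we move a derivative off $u$, which we cannot. The correct classical route (cf. Bourgain–Brezis and the $a=0$ case) is: the estimate $\left|\intomega\varphi f\right|\le C\|\varphi\|_{W^{1,1}}\|f\|_{L^\infty}$ combined with choosing $f$ to be a suitable \emph{unit-$L^\infty$} function that makes $\int\varphi f$ as large as $\|\varphi\|_{L^1}$ forces nothing; instead one chooses $\varphi$ cleverly and $f=\operatorname{sgn}(\text{something})$. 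The standard contradiction for $\diw u=f$ with $f,u\in L^\infty$ is obtained by duality with $W^{1,1}$: one shows $\|\varphi\|_{L^\infty}\le C\|\Delta\varphi\|_{W^{-1,1}}$-type failures. I would therefore proceed as follows: apply the surjectivity to $f=\diw\psi+\langle a,\psi\rangle$ for $\psi\in C_c^\infty$ — this is in $L^\infty$ — obtaining $u$ with $T_a(u)=T_a(\psi)$, i.e. $w:=u-\psi\in\ker T_a$ with $\|u\|_{W^{1,\infty}}\le C_1(\|\diw\psi\|_{L^\infty}+\|a\|_{L^\infty}\|\psi\|_{L^\infty})$; but this controls $\psi$ modulo the (infinite-dimensional) kernel and is again not immediately a contradiction.

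The honest summary: I expect the proof to run parallel to Theorem \ref{theorem:non reg in L1} but with the duality inequality
\[
\left|\intomega \varphi f\right| \le C\big(\|\nabla\varphi\|_{L^1}+\|\varphi\|_{L^1}\big)\|f\|_{L^\infty}\qquad\text{for all }\varphi\in C_c^\infty(\Omega),\ f\in L^\infty(\Omega)
\]
derived exactly as above, and then the contradiction is reached by \emph{not} optimizing over $f$ but by choosing, for a fixed bad $\varphi$, the function $f=\operatorname{sgn}\varphi\in L^\infty$ (so $\int\varphi f=\|\varphi\|_{L^1}$) — which gives nothing — so one must instead choose $\varphi$ concentrating near a point and $f$ an oscillating unit function detecting a derivative; equivalently, one invokes that $L^\infty\to L^\infty$ surjectivity of $T_a$ would make $(\diw+\langle a,\cdot\rangle)^*$ an isomorphism onto its range in $(L^\infty)^*$, forcing a closed-range estimate $\|g\|_{L^1}\le C\|\nabla g - ga\|_{(W^{1,\infty})^*}$ that fails for $g$ a smooth bump (test against $\varphi$ with $\|\varphi\|_{W^{1,\infty}}$ bounded but $\varphi$ highly oscillatory: the $W^{-1,\infty}$-type norm of $\nabla g$ can be made arbitrarily small while $\|g\|_{L^1}$ stays fixed).

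\medskip

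I would therefore carry out the argument in three steps. \textbf{Step 1 (open mapping).} Assume $T_a:W^{1,\infty}(\Omega;\re^n)\to L^\infty(\Omega)$ onto; get $C_1$ with $\|u\|_{W^{1,\infty}}\le C_1\|f\|_{L^\infty}$ for some solution $u$ of $T_a u=f$. \textbf{Step 2 (duality estimate).} For every $\varphi\in C_c^\infty(\Omega)$ and $f\in L^\infty(\Omega)$, integrate by parts to obtain
\[
\left|\intomega \varphi f\right|\ \le\ \big(\|\nabla\varphi\|_{L^{1}}+\|a\|_{L^\infty}\|\varphi\|_{L^{1}}\big)\,\|u\|_{L^\infty}\ \le\ C\,\|\varphi\|_{W^{1,1}}\,\|f\|_{L^\infty},
\]
with $C=C(\Omega,\|a\|_{L^\infty},C_1)$; here I use $W^{1,\infty}\hookrightarrow L^\infty$. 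Reformulating, for every $\varphi\in C_c^\infty(\Omega)$,
\[
\|\varphi\|_{(L^\infty)^*}=\|\varphi\|_{L^1}\le C\,\|\varphi\|_{W^{1,1}},
\]
which is \emph{not} yet a contradiction, so \textbf{Step 3} must squeeze more out: I keep the \emph{vector} version, namely for every $\Phi\in C_c^\infty(\Omega;\re^n)$ and $F\in L^\infty(\Omega;\re^n)$ one gets in the same way $\left|\intomega\langle\Phi,F\rangle\right|\le C\|\diw\Phi+\langle a,\Phi\rangle\|_{?}$ — no. The actual contradiction, as in the divergence case, comes from applying Step 2 with $\varphi$ replaced by a sequence $\varphi_j$ that is bounded in $W^{1,1}$ (in fact in $W^{1,\infty}$) but with $\|\varphi_j\|_{L^1}$ bounded below, while choosing $f_j\in L^\infty$, $\|f_j\|_{L^\infty}\le 1$, with $\int\varphi_j f_j\ge c>0$ and simultaneously $\diw$-structure forcing $\|\nabla\varphi_j\|_{L^1}\to\infty$ — contradiction.

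\medskip

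\noindent\emph{Main obstacle.} The only subtlety — and the step I expect to cost real work — is exhibiting the bad pair $(\varphi,f)$, i.e. translating ``$L^\infty\not\hookrightarrow W^{1,\infty}$ up to the kernel'' into an explicit violation of the Step 2 inequality; this is where one uses, just as for $a=0$ (see \cite{Bourgain-Brezis}), that controlling $\|u\|_{W^{1,\infty}}$ by $\|f\|_{L^\infty}$ would in particular control $\|\nabla u\|_{L^\infty}$, hence force $u$ to be Lipschitz with a modulus depending only on $\|f\|_{L^\infty}$; taking $f$ the characteristic function of a ball (times a constant vector in the $a=0$ reduction) then yields a solution of $\diw u=\pm 1$ on each half that cannot be Lipschitz across the interface with uniformly bounded constant — the classical obstruction. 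Everything else is a verbatim adaptation of the proof of Theorem \ref{theorem:non reg in L1}, with the roles of $L^1$ and $L^\infty$ interchanged and the lower-order term $\langle a,\cdot\rangle$ absorbed using $a\in W^{1,\infty}\subset L^\infty$.
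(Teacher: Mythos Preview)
Your Step 1 is correct, and your recognition that the naive Step 2 yields only the trivial bound $\|\varphi\|_{L^1}\le C\|\varphi\|_{W^{1,1}}$ is exactly right. But none of the fixes you sketch in Step 3 closes the gap: they remain vague, and the ``characteristic function of a ball'' heuristic is not how the $a=0$ case is settled either.

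The missing idea is to test with a \emph{second-order} derivative and invoke Ornstein. Take $\psi\in C_c^\infty(\Omega)$ and set $f=\operatorname{sign}\psi_{x_1x_2}$, so that $\int \psi_{x_1x_2}f=\|\psi_{x_1x_2}\|_{L^1}$. In $\int\psi_{x_1x_2}\diw u$ one rearranges the derivatives (two integrations by parts) to obtain $\int\psi_{x_1x_1}\,\partial_{x_2}u_1+\int\psi_{x_2x_2}\,\partial_{x_1}u_2$, which is controlled by $(\|\psi_{x_1x_1}\|_{L^1}+\|\psi_{x_2x_2}\|_{L^1})\|\nabla u\|_{L^\infty}$ --- crucially using the full $W^{1,\infty}$ bound on $u$, not just $\|u\|_{L^\infty}$. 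The lower-order term $\int\psi_{x_1x_2}\langle a,u\rangle$ requires one integration by parts and produces $\partial_{x_2}\langle a,u\rangle$, hence a derivative of $a$; this is precisely why the hypothesis is $a\in W^{1,\infty}$ rather than $a\in L^\infty$, a hint you overlooked (all your attempts used only $\|a\|_{L^\infty}$). After absorbing the first-order remainder via a Poincar\'e-type bound, the outcome is
\[
\|\psi_{x_1x_2}\|_{L^1}\ \le\ C\big(\|\psi_{x_1x_1}\|_{L^1}+\|\psi_{x_2x_2}\|_{L^1}\big)\qquad\text{for all }\psi\in C_c^\infty(\Omega),
\]
which contradicts Ornstein's non-inequality \cite{Ornstein}. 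Your attempts all stayed at first order in the test function and bounded $u$ only in $L^\infty$; the resulting estimates are true and therefore useless. The obstruction here is of Ornstein type, not a failed Sobolev embedding.
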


\begin{proof} For simplicity we present the proof for $n=2.$ We assume by contradiction that for every $f\in L^{\infty}$ there exists $u$ satisfying (as in the proof of Theorem \ref{theorem:non reg in L1})
$$
  T_a(u)=\diw u+\langle a;u\rangle =f,\quad \|u\|_{W^{1,\infty}}\leq C\|f\|_{L^{\infty}}.
$$
Let $\psi\in C_c^{\infty}(\Omega)$ and define 
 $f\in L^{\infty}(\Omega)$ as ($\psi_{x_i},\psi_{x_i x_j}$ denote the partial derivatives of \(\psi\))
$$
  f=\operatorname{sign}\psi_{x_1x_2}.
$$
Let $u$ be a solution of $T_a(u)=f$ with $\|u\|_{W^{1,\infty}}\leq C\|f\|_{L^{\infty}}=C.$
We therefore obtain that
\begin{align*}
 \|\psi_{x_1x_2}\|_{L^1}=&\left|\int\psi_{x_1x_2}f\right|=\left|\int\psi_{x_1x_2}\left(\diw u+\langle a,u\rangle\right) \right|
 \smallskip \\
 =&\left|\int\left(\psi_{x_1x_1}\frac{\partial u_1}{\partial x_2}+\psi_{x_2x_2}\frac{\partial u_2}{\partial x_1}-\psi_{x_1}\frac{\partial }{\partial x_2}\langle a,u\rangle\right)\right|
 \smallskip \\
 \leq & \left(\|\psi_{x_1x_1}\|_{L^1}+\|\psi_{x_2x_2}\|_{L^1}\right)\|u\|_{W^{1,\infty}}+\|\psi_{x_1}\|_{L^1}\|a\|_{W^{1,\infty}}\|u\|_{W^{1,\infty}}.
\end{align*}
We now use that $\|\psi_{x_1}\|_{L^1}\leq C \|\psi_{x_1x_1}\|_{L^1}$ for some constant $C>0$ depending only on $\Omega.$ This gives
$$
  \|\psi_{x_1x_2}\|_{L^1}\leq C(1+\|a\|_{W^{1,\infty}})\left(\|\psi_{x_1x_1}\|_{L^1}+\|\psi_{x_2x_2}\|_{L^1}\right),
$$
which is a contradiction to the non-inequality of Ornstein \cite{Ornstein}.
\end{proof}

\section{Appendix}

\subsection{Multiplication in Sobolev spaces}

\begin{lemma}\label{lemma_multiplier}
Let \(1\leq p\leq q \leq \infty\) and  let $l\in \N$ be an integer such that $n-pl>0$ and let \(m\in \N^*\). Define
$$
  p(l)=\frac{np}{n-pl}
$$
We assume that 
\begin{equation}
 \label{eq:assumption on pml}
  q> \frac{n}{m+l}.
\end{equation}
Then for every \(f\in W^{m,p(l)}(\Omega)\) and \(g\in W^{m,q}(\Omega)\), the function \(f g \) belongs to \(W^{m,p}(\Omega)\) and 
\[
\|f g\|_{W^{m,p}} \leq C \|f\|_{W^{m,p(l)}} \|g\|_{W^{m,q}}
\]
for some constant \(C>0\) which depends only on \(n,m,p,q\) and \(l\). 
\end{lemma}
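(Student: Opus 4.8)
The plan is to prove the multiplication estimate by reducing it to the Leibniz rule combined with the generalized Hölder inequality and the Sobolev (Gagliardo–Nirenberg–Sobolev) embeddings. The statement bounds $\|fg\|_{W^{m,p}}$ in terms of $\|f\|_{W^{m,p(l)}}$ and $\|g\|_{W^{m,q}}$, where $p(l)$ is larger than $p$ (since $f$ may be less integrable, being in a ``lower'' scale) while $g$ enjoys more integrability coming from the hypothesis $q>\frac{n}{m+l}$. So the point is a careful accounting of exponents term by term in the Leibniz expansion of derivatives of $fg$.

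\textbf{Step 1: Reduction to estimating each term of the Leibniz rule.} For a multi-index $\gamma$ with $|\gamma|\le m$, one has $D^\gamma(fg)=\sum_{\beta\le\gamma}\binom{\gamma}{\beta}D^\beta f\,D^{\gamma-\beta}g$. It suffices to show that for all $j,k\ge 0$ with $j+k\le m$, and all derivatives $D^j f$, $D^k g$ of those orders, the product $D^j f\, D^k g$ lies in $L^p(\Omega)$ with norm controlled by $\|f\|_{W^{m,p(l)}}\|g\|_{W^{m,q}}$; summing over $\beta$ then yields the claim. I would first dispose of the trivial cases where one factor sits in $L^\infty$ by Morrey's embedding (e.g. if $q>n$ then $g\in C^0$, or more precisely $W^{m,q}\subset W^{k,\infty}$ for suitable $k$), so the main work is the genuinely ``critical'' regime.

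\textbf{Step 2: Exponent bookkeeping via Sobolev embeddings.} For fixed $j$ with $0\le j\le m$, $f\in W^{m,p(l)}$ gives $D^j f\in W^{m-j,p(l)}\subset L^{r_j}$ where, by the Sobolev embedding, $\frac{1}{r_j}=\frac{1}{p(l)}-\frac{m-j}{n}=\frac{1}{p}-\frac{l}{n}-\frac{m-j}{n}$ (when this quantity is positive; otherwise $D^j f\in L^\infty$ or any $L^{r}$, $r<\infty$, and the argument only gets easier). Similarly $D^k g\in W^{m-k,q}\subset L^{s_k}$ with $\frac{1}{s_k}=\frac1q-\frac{m-k}{n}$, using that $q>\frac{n}{m+l}\ge\frac{n}{m+k}$ ensures... — here one must check $m-k<\frac nq$ is not automatic, so when $\frac1q-\frac{m-k}{n}\le 0$ we again land in $L^\infty$ (or all finite $L^s$). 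The goal is then $\frac{1}{r_j}+\frac{1}{s_k}\le\frac1p$, since then $D^jf\cdot D^kg\in L^p$ by the generalized Hölder inequality \eqref{eq:multipliciation in Lp}. Computing, $\frac{1}{r_j}+\frac{1}{s_k}=\frac1p-\frac ln-\frac{m-j}{n}+\frac1q-\frac{m-k}{n}=\frac1p+\frac1q-\frac{m+l}{n}+\frac{j+k-m}{n}$. Using $j+k\le m$ this is $\le \frac1p+\frac1q-\frac{m+l}{n}$, and the hypothesis $q>\frac{n}{m+l}$, i.e. $\frac1q<\frac{m+l}{n}$, makes $\frac1q-\frac{m+l}{n}<0$, giving $\frac{1}{r_j}+\frac{1}{s_k}<\frac1p$, as needed. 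The boundary cases where one of $r_j,s_k$ is ``$\infty$'' (embedding into $L^\infty$ or into all $L^s$, $s<\infty$) are handled by the freedom in \eqref{eq:multipliciation in Lp} to choose the finite exponent of the $L^s$-factor large enough that the sum of reciprocals is $\le\frac1p$; since the inequality above is strict, there is room to do so.

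\textbf{Step 3: Assemble and track constants.} Each such estimate comes with a constant depending only on $n,m,p,q,l$ (from the Sobolev embedding constants and the Hölder inequality), and there are finitely many pairs $(j,k)$ and multi-indices, so summing gives the stated inequality $\|fg\|_{W^{m,p}}\le C\|f\|_{W^{m,p(l)}}\|g\|_{W^{m,q}}$. I expect the main obstacle to be purely organizational rather than conceptual: one must be careful at the threshold cases $m-j=\frac{n}{p(l)}$ or $m-k=\frac nq$ where Sobolev embedding fails to land in a single $L^r$ but only in all $L^r$ with $r<\infty$ (or in $L^\infty$ with a logarithmic loss that one avoids by not using the endpoint), and to verify that the strictness in $q>\frac{n}{m+l}$ is exactly what provides the slack needed in every case. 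A secondary subtlety is that the Sobolev embeddings require some regularity/extension property of $\Omega$; since $\Omega$ is at least Lipschitz (indeed $C^2$ in the sections where this lemma is applied), the extension operator exists and all embeddings used are valid.
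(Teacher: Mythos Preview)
Your proposal is correct and follows essentially the same approach as the paper: Leibniz rule, Sobolev/Morrey embeddings on each factor, and H\"older's inequality, with the strict hypothesis $q>\frac{n}{m+l}$ providing the slack at the critical endpoints. The only cosmetic difference is that the paper checks just the extreme orders (i.e., $fg\in L^p$ and $D^m(fg)\in L^p$), whereas you estimate every term $D^jf\,D^kg$ with $j+k\le m$; also, the paper spells out the borderline cases $k=\frac{n}{p(l)}$ and $m-k=\frac{n}{q}$ explicitly, which you correctly flag as the only place where care is needed (and note that your throwaway inequality $\frac{n}{m+l}\ge\frac{n}{m+k}$ is not valid for $k<l$, but you immediately drop it and handle the non-subcritical regime directly, so this is harmless).
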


We will only use the cases $l=0$ and $l=1,$ which we emphasize as a Corollary.

\begin{corollary}
\label{corollary:Sob multiply}
Let \(1\leq p\leq q \leq \infty\) and  \(m\in \N^*\). 

(i) We assume that  \(q> \frac{n}{m}\).
Then for every \(f\in W^{m,p}(\Omega)\) and \(g\in W^{m,q}(\Omega)\), the function \(f g \) belongs to \(W^{m,p}(\Omega)\) and 
\[
\|f g\|_{W^{m,p}} \leq C \|f\|_{W^{m,p}} \|g\|_{W^{m,q}}.
\]

(ii)  We assume that  \(q> \frac{n}{m+1}\), $p<n$ and $p^*$ denotes the Sobolev conjugate of $p.$
Then for every \(f\in W^{m,p^*}(\Omega)\) and \(g\in W^{m,q}(\Omega)\), the function \(f g \) belongs to \(W^{m,p}(\Omega)\) and 
\[
\|f g\|_{W^{m,p}} \leq C \|f\|_{W^{m,p^*}} \|g\|_{W^{m,q}}.
\]
\end{corollary}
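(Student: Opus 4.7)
The corollary is an immediate specialization of Lemma~\ref{lemma_multiplier}. For part (i), apply the lemma with $l = 0$: the definition then gives $p(0) = p$, the constraint $n - pl > 0$ is vacuous, and the hypothesis \eqref{eq:assumption on pml} reduces to $q > n/m$, which is precisely what is assumed. For part (ii), apply the lemma with $l = 1$: the condition $n - pl > 0$ becomes $p < n$ (assumed), one computes $p(1) = np/(n-p) = p^{*}$, and \eqref{eq:assumption on pml} becomes $q > n/(m+1)$, again matching the hypothesis. In both cases Lemma~\ref{lemma_multiplier} directly yields the stated multiplicative estimate, so no further work is required once the lemma is in hand.

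The substantive content therefore lies entirely in Lemma~\ref{lemma_multiplier} itself, whose proof I would sketch as follows. By the Leibniz rule, bounding $\|fg\|_{W^{m,p}}$ reduces to estimating each term $\|D^{\beta} f \cdot D^{\alpha-\beta} g\|_{L^{p}}$ with $|\alpha| \leq m$. Setting $j = |\beta|$ and $k = |\alpha - \beta|$, so that $j + k \leq m$, I would apply H\"older's inequality with exponents $r, s$ satisfying $1/r + 1/s = 1/p$, chosen so that the Sobolev embeddings $W^{m-j, p(l)} \hookrightarrow L^{r}$ and $W^{m-k, q} \hookrightarrow L^{s}$ are simultaneously valid. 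Using $1/p(l) = 1/p - l/n$, the allowed exponents satisfy $1/r \leq 1/p - (m+l-j)/n$ and $1/s \leq 1/q - (m-k)/n$; summing gives
\[
\tfrac{1}{r} + \tfrac{1}{s} \leq \tfrac{1}{p} + \tfrac{1}{q} - \tfrac{2m + l - j - k}{n} \leq \tfrac{1}{p} + \tfrac{1}{q} - \tfrac{m + l}{n},
\]
where the second inequality uses $j + k \leq m$. This is at most $1/p$ exactly when $q \geq n/(m+l)$, and the strict inequality in \eqref{eq:assumption on pml} provides the margin needed to push the argument through.

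The main obstacle is the careful bookkeeping of critical and limiting Sobolev embeddings: when $(m-j)\,p(l) = n$ or $(m-k)\,q = n$, the target exponent in the corresponding embedding cannot be taken to be $\infty$, and one must instead choose a finite but arbitrarily large exponent, absorbing the loss using the slack produced by the strict inequality $q > n/(m+l)$. Aside from this case analysis, the proof is a standard computation combining Leibniz, H\"older, and Sobolev embedding, and the Corollary then follows trivially by the two choices of $l$ indicated above.
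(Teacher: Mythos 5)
Your proposal is correct and follows exactly the paper's route: the Corollary is obtained from Lemma~\ref{lemma_multiplier} by the substitutions $l=0$ and $l=1$, and your sketch of the Lemma itself (Leibniz, H\"older, Sobolev embeddings, with the critical cases absorbed via the strict inequality $q>\frac{n}{m+l}$) matches the paper's argument. The only quibble is a reversed inequality in your prose --- the embedding $W^{m-j,p(l)}\subset L^{r}$ allows $\frac{1}{r}\geq \frac{1}{p}-\frac{m+l-j}{n}$, not $\leq$ --- but since you then sum the optimal (smallest) values of $\frac1r$ and $\frac1s$, the conclusion you draw is the intended and correct one.
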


\begin{proof}[Proof of Lemma \ref{lemma_multiplier}.]
Let \(f\in W^{m,p(l)}\) and \(g\in W^{m,q}\). By the Sobolev and the Morrey embeddings, for every \(k=0, \dots, m\), \(W^{k,p(l)} \subset L^{p_{k}(l)}\) with
\begin{equation}
 \label{eq:qk}
  \frac{1}{p_k(l)}:=\begin{cases}
  \frac{1}{p(l)} - \frac{k}{n}=\frac{1}{p} - \frac{k+l}{n} & \textrm{ if } k<\frac{n}{p(l)}\quad\Leftrightarrow\quad k+l<\frac{n}{p},\\
  \text{as small as we wish} & \textrm{ if } k=\frac{n}{p(l)},\\
  0 & \textrm{ if } k>\frac{n}{p(l)},
  \end{cases}
\end{equation}
where the third case has to be understood as $p_k(l)=\infty.$
The same holds true for \(W^{k,q}\subset L^{q_k}(\Omega)\) with
\begin{equation}
 \label{eq:pk}
  \frac{1}{q_k}:=\begin{cases}
  \frac{1}{q} - \frac{k}{n} & \textrm{ if } k<\frac{n}{q},\\
  \text{as small as we wish}& \textrm{ if } k=\frac{n}{q},\\
  0 & \textrm{ if } k>\frac{n}{q}.
  \end{cases}
\end{equation}
By considering each case successively, one can check that  for every \(k=0,\dots,m\),
\begin{equation}\label{eq-1056}
  \frac{1}{p_k(l)} + \frac{1}{q_{m-k}} \leq \frac{1}{p}.
\end{equation}
Indeed,  if we are in the first cases of both \eqref{eq:qk} and \eqref{eq:pk}, then using that \(q\geq \frac{n}{m+l}\), one gets
\begin{align*}
 \frac{1}{p_k(l)}+\frac{1}{q_{m-k}}=\frac{1}{p}+\frac{1}{q}-\frac{m+l}{n}\leq \frac{1}{p}.
\end{align*}

Assume next that \(k=\frac{n}{p(l)}\) and \(m-k<\frac{n}{q}\). Then \eqref{eq-1056} is satisfied provided that \(\frac{1}{q_{m-k}}<\frac{1}{p}\), or equivalently, 
\begin{equation}\label{eq-1064}
\frac{1}{q}-\frac{m-k}{n}<\frac{1}{p}.
\end{equation}
This holds true when \(q>p\). When \(q=p\), we have
\[
k=\frac{n}{p(l)}=\frac{n}{p}-l = \frac{n}{q}-l.
\]
Since by assumption \(q>\frac{n}{m+l}\), one has \(k<m\), which implies \eqref{eq-1064}. 

In the case \(k<\frac{n}{p(l)}\) and \(m-k=\frac{n}{q}\), the proof is essentially the same (when \(l=0\), we  use that \(q>\frac{n}{m}\) which implies that \(k>0\)). The remaining cases are obvious and we omit them. This completes the proof of \eqref{eq-1056}.
 
One also verifies that
$$
  \quad  \frac{1}{p_m(l)} + \frac{1}{q_m}\leq \frac{1}{p}.
$$
In particular, since \(f\in L^{p_m(l)}\), \(g\in L^{q_m}\), the H\"older inequality implies \(fg\in L^{p}\). 

We now prove that \(D^m(fg)\in L^{p}\), where \(D^m\) denotes the tensor of all partial derivatives of order \(m\). By the Leibniz rule,
\begin{equation*}
 %\label{eq177}
  \|D^m(fg)\|_{L^{p}}\leq C \sum_{k=0}^{m}\||D^{m-k}f||D^k g|\|_{L^{p}}.
\end{equation*}
Since \(D^{m-k}f\in W^{k,p(l)}\subset L^{p_k(l)}\) and \(D^k g \in W^{m-k,q}\subset L^{q_{m-k}}\), the H\"older inequality and \eqref{eq-1056} imply that each term of the above sum belongs to \(L^{p}\) and
\[
\|D^m(fg)\|_{L^{p}}\leq C' \sum_{k=0}^{m}\|D^{m-k}f\|_{L^{p_k(l)}}\|D^k g\|_{L^{q_{m-k}}}\leq C''\|f\|_{W^{m,p(l)}}\|g\|_{W^{m,q}}.
\] 
This completes the proof.
\end{proof}

\subsection{A density lemma}

\begin{lemma}\label{lemma_complementing_density}
Let \(X\) be  a Banach space, \(F\) a closed subspace of \(X\) of codimension \(k\geq 1\) and \(D\) a dense subset of \(X\). Then there exists a complemented subspace \(G\) for \(F\)  such that \(G\) has a basis contained in \(D\).
\end{lemma}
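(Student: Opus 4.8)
The plan is to build a basis $\{g_1,\dots,g_k\}$ of a complement $G$ one vector at a time, using the density of $D$ to perturb an arbitrary algebraic complement without destroying the directness of the sum. First I would fix any algebraic complement: since $F$ has codimension $k$, there are vectors $v_1,\dots,v_k\in X$ whose images form a basis of the quotient $X/F$, equivalently such that $X=F\oplus\operatorname{span}\{v_1,\dots,v_k\}$. Equivalently, there exist continuous functionals $\varphi_1,\dots,\varphi_k\in X^*$ that vanish on $F$ and satisfy $\varphi_i(v_j)=\delta_{ij}$; this is the convenient data to work with, because a family $\{g_1,\dots,g_k\}$ spans a complement of $F$ precisely when the matrix $(\varphi_i(g_j))_{i,j}$ is invertible.

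Next I would approximate. For each $j$ choose $g_j\in D$ with $\|g_j-v_j\|$ small; then $\varphi_i(g_j)$ is close to $\delta_{ij}$, so the $k\times k$ matrix $M=(\varphi_i(g_j))$ is close to the identity. By continuity of the determinant (or simply because the set of invertible matrices is open), if the approximations are taken within a small enough radius $\varepsilon$, depending only on $k$ and on $\max_i\|\varphi_i\|_{X^*}$, the matrix $M$ is invertible. Set $G=\operatorname{span}\{g_1,\dots,g_k\}$. Invertibility of $M$ forces the $g_j$ to be linearly independent, so they form a basis of $G$, and this basis is contained in $D$ by construction.

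It remains to check that $G$ is a complement of $F$, i.e.\ $X=F\oplus G$. For directness: if $w\in F\cap G$, write $w=\sum_j c_j g_j$; applying $\varphi_i$ and using $\varphi_i|_F=0$ gives $0=\sum_j c_j\varphi_i(g_j)=(Mc)_i$ for all $i$, hence $c=0$ since $M$ is invertible, so $w=0$. For spanning: given $x\in X$, set $c=M^{-1}\big(\varphi_1(x),\dots,\varphi_k(x)\big)^{\mathsf T}$ and $g=\sum_j c_j g_j\in G$; then $\varphi_i(x-g)=\varphi_i(x)-(Mc)_i=0$ for every $i$, and since the $\varphi_i$ cut out exactly $F$ among the elements of $X$ (they are $k$ independent functionals vanishing on the codimension-$k$ subspace $F$, hence $F=\bigcap_i\ker\varphi_i$), we get $x-g\in F$, so $x\in F+G$. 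Finally $G$ is finite dimensional, hence closed, and a finite-dimensional algebraic complement of a closed subspace is automatically a topological complement, so $F$ is complemented by $G$. The only mild subtlety — the step I would treat as the crux — is arranging the quantitative smallness of the perturbation so that $M$ stays invertible; everything else is linear algebra together with the standard fact that finite-dimensional subspaces split off topologically.
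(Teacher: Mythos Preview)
Your proof is correct and follows essentially the same strategy as the paper: both pick a basis of an arbitrary complement of \(F\), approximate it by elements of \(D\), and check that the perturbed family still spans a complement. The only difference is in the verification step---you use dual functionals \(\varphi_i\) and the invertibility of the perturbed matrix \((\varphi_i(g_j))\), whereas the paper argues by contradiction via a compactness/subsequence argument on normalized coefficient vectors---but this is a minor technical variation rather than a genuinely different route.
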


\begin{proof}
Let \(\widetilde{G}\) be a complemented subspace of \(F\) and \((e_1, \dots, e_k)\) be a basis of \(\widetilde{G}\). We consider an approximation \((e_{1}^{n},\dots, e_{k}^{n})_{n\in\N}\subset D^{k}\) of this basis: 
\begin{equation}\label{eq413}
\forall i\in \{1, \dots, k\}, \qquad \lim_{n\to +\infty}e_{i}^{n}=e_i
\end{equation}
and we define 
\[
G^n:=\textrm{Vect}\,(e_{1}^{n},\dots, e_{k}^{n}).
\]
We claim that there exists an \(n_0\in \N\) such that (actually this is true for all $n$ sufficiently large), 
\begin{equation}\label{eq:lambda zero}
  \forall \lambda=(\lambda_1, \dots, \lambda_k) \in \R^k,\qquad
  \sum_{j=1}^k\lambda_j e_{j}^{n_0} \in F \Longrightarrow \lambda=0.
\end{equation}
Indeed, assume by contradiction that  for every \(n\in \N\), there exists a \(\lambda^{n}=(\lambda_{1}^{n}, \dots, \lambda_{k}^{n})\in \R^k \setminus \{0\}\) such that 
\[
\sum_{j=1}^{k}\lambda_{j}^{n}e_{j}^{n}\in F.
\]
We can assume that \(\|\lambda^{n}\|=1\) for some fixed norm \(\|\cdot\|\) in \(\R^k\). Hence, up to a subsequence (we do not relabel), \((\lambda^{n})_{n\in \N}\) converges to some \(\lambda \in \R^k\) such that \(\|\lambda\|=1\). Then 
\[
\lim_{n\to +\infty}\sum_{j=1}^{k}\lambda_{j}^{n}e_{j}^{n} = \sum_{j=1}^{k}\lambda_{j}e_{j}.
\]
Since \(F\) is closed, this implies that \(\sum_{j=1}^{k}\lambda_{j}e_{j}\in F\). Using that \(\widetilde{G}\cap F =\{0\}\), we deduce that \(\sum_{j=1}^{k}\lambda_{j}e_{j}=0\), which implies that \(\lambda=0\) in view of the fact that \((e_j)_{1\leq j \leq k}\) is a basis of $\widetilde{G}.$ This contradicts the fact that \(\|\lambda\|=1\) and the claim \eqref{eq:lambda zero} is proved.
\smallskip

Set $G=G^{n_0}$ which satisfies by \eqref{eq:lambda zero}, $G\cap F=\{0\}$. Since \(F\) has codimension \(k\), this proves that \(X=F\oplus G\), as desired.

\end{proof}

\smallskip

\noindent\textbf{Acknowledgements} The second author was supported by Chilean Fondecyt Iniciaci\'on grant nr. 11150017 and  is member of the Barcelona Graduate School of 
Mathematics. He also acknowledges financial support from the Spanish Ministry of Economy and Competitiveness, through the ``Mar\'ia de Maeztu'' Programme for Units of Excellence in RD (MDM-2014-0445).

%%%%%%%%%%%%%%%%% Bibliography %%%%%%

\end{document}